\documentclass[11pt, notitlepage]{article}
\usepackage{amssymb,amsmath,comment}
\catcode`\@=11 \@addtoreset{equation}{section}
\def\thesection{\arabic{section}}

\def\theequation{\thesection.\arabic{equation}}
\catcode`\@=12
\usepackage{colortbl}%
\usepackage{a4wide}
\usepackage{parskip}

\newcommand{\ds} {\displaystyle}
\newcommand{\e}{\epsilon}
\newcommand{\pa} {\partial}
\newcommand{\al} {\alpha}
\newcommand{\ba} {\beta}
\newcommand{\vsg} {\varsigma}
\newcommand{\de} {\delta}
\newcommand{\ga} {\gamma}
\newcommand{\Ga} {\Gamma}
\newcommand{\Om} {\Omega}
\newcommand{\ra} {\rightarrow}
\newcommand{\De} {\Delta}
\newcommand{\la} {\lambda}
\newcommand{\La} {\Lambda}
\newcommand{\sg} {\sigma}
\newcommand{\noi} {\noindent}
\newcommand{\na} {\nabla}
\newcommand{\ul} {\underline}
\newcommand{\ov} {\overline}
\newcommand{\mb} {\mathbb}
\newcommand{\mc} {\mathcal}

\newcommand{\tl} {\tilde}

\setcounter{page}{1}\pagestyle{myheadings}
\usepackage[all]{xy}
\catcode`\@=11
\def\theequation{\@arabic{\c@section}.\@arabic{\c@equation}}
\catcode`\@=12

\def\QED{\hfill {$\square$}\goodbreak \medskip}

\newtheorem{Theorem}{Theorem}[section]
\newtheorem{Lemma}[Theorem]{Lemma}
\newtheorem{Proposition}[Theorem]{Proposition}
\newtheorem{Corollary}[Theorem]{Corollary}
\newtheorem{Remark}[Theorem]{Remark}
\newtheorem{Definition}[Theorem]{Definition}

\def\XXint#1#2#3{{\setbox0=\hbox{$#1{#2#3}{\int}$ }
		\vcenter{\hbox{$#2#3$ }}\kern-.6\wd0}}

\begin{document}
{\vspace{0.01in}

	\title
{Sobolev and H\"older regularity results for some singular nonhomogeneous quasilinear problems}
\author{  Jacques Giacomoni$^{\,1}$ \footnote{e-mail: {\tt jacques.giacomoni@univ-pau.fr}}, \ Deepak Kumar$^{\,2}$\footnote{e-mail: {\tt deepak.kr0894@gmail.com}},  \
	and \  K. Sreenadh$^{\,2}$\footnote{
		e-mail: {\tt sreenadh@maths.iitd.ac.in}} \\
	\\ $^1\,${\small Universit\'e  de Pau et des Pays de l'Adour, LMAP (UMR E2S-UPPA CNRS 5142) }\\ {\small Bat. IPRA, Avenue de l'Universit\'e F-64013 Pau, France}\\  
	$^2\,${\small Department of Mathematics, Indian Institute of Technology Delhi,}\\
	{\small	Hauz Khaz, New Delhi-110016, India } }

\date{}

\maketitle

\begin{abstract}
 \noi This article deals with the study of
  the following singular quasilinear equation: 
  \begin{equation*}
   (P) \left\{ \ -\Delta_{p}u -\Delta_{q}u  = f(x) u^{-\delta},\; u>0 \text{ in }\; \Om; \; u=0 \text{ on }  \pa\Om,
   	\right. 
  \end{equation*}
  where $\Om$ is a bounded domain in $\mathbb{R}^N$ with $C^2$ boundary $\pa\Om$, $1< q< p<\infty$, $\de>0$ and $f\in L^\infty_{loc}(\Om)$ is a non-negative function which behaves like $\textnormal{dist}(x,\pa\Om)^{-\ba},$ $\ba\ge 0$ near the boundary of $\Om$. We prove the existence of a weak solution in $W^{1,p}_{loc}(\Om)$ and its behaviour near the boundary for $\ba<p$. Consequently, we obtain optimal Sobolev regularity of weak solutions.  By establishing the comparison principle,  we prove the uniqueness of weak solution for the case $\ba<2-\frac{1}{p}$. Subsequently, for the case $\ba\ge p$, we prove the non-existence result. Moreover, we prove H\"older regularity of the gradient of weak solution to a more general class of quasilinear equations involving singular nonlinearity as well as lower order terms (see \eqref{Prb}). This result is completely new and of independent interest.
  In addition to this, we prove H\"older regularity of minimal weak solutions of $(P)$ for the case $\beta+\delta\geq 1$ that has not been fully answered in former contributions even for $p$-Laplace operators.
 \medskip

 \noi \textbf{Key words:} $(p,q)$-Singular equations, nonhomogeneous operators, existence and uniqueness results, comparison principle, non-existence results, regularity results.

\medskip

\noi \textit{2010 Mathematics Subject Classification:} 35J20, 35J62, 35J75, 35J92, 35B65, 49N60

\end{abstract}

\bigskip

\section {Introduction}
 \noindent The purpose of this article is to study the existence and regularity of the weak solution to the following prototype singular problem:
    \begin{equation*}
     (P) \; \;\left\{\begin{array}{rllll}
     -\Delta_{p}u-\Delta_{q}u & = f(x) u^{-\de}, \;  u> 0 \text{ in } \Om \\ u&=0 \quad \text{ on } \partial\Om,
     \end{array}
     \right.
     \end{equation*}
 \noi where $\Om$ is a bounded domain in $\mb R^N$ with $C^2$ boundary $\pa\Om$, $1< q< p<\infty$ and $\de>0$.
 $\Delta_{p}$ is the $p$-Laplace operator, defined as $\De_p u:= \textnormal{div} (|\na u|^{p-2}\na u)$. The operator $A_{p,q}:=-\Delta_{p}-\Delta_{q}$ is known as $(p,q)$-Laplacian which arises while studying the stationary solutions of general reaction-diffusion equation
 \begin{equation}\label{prb1}
 u_t= \textnormal{div}[A(u)\nabla u]+ r(x,u),
 \end{equation}
 where $A(u)= |\nabla u|^{p-2}+|\nabla u|^{q-2}$. The 
 problem \eqref{prb1} has  applications in biophysics, plasma physics and chemical reactions, with double phase features, where the function $u$ corresponds to the concentration term, the first term on the right side represents diffusion with a diffusion coefficient $A(u)$ and the second term is the reaction which relates to sources and loss processes. For more details,  readers are referred to \cite{marano} and its references.\par 
The energy functional of equations driven by the $(p,q)$-Laplacian falls in the category of  the so-called functionals with
 nonstandard growth conditions of $(p, q)$-type, according to Marcellini's terminology \cite{marce2}. These kinds of functionals involve  integrals of the form
 \begin{align*}
 	I(u)=\int_{\Om} h(x,\na u(x))~dx,
 \end{align*} 
  where the energy density, $h$, satisfies
 $$|\xi|^p\leq |h(x,\xi)|\leq  |\xi|^q+1,\quad 1\leq p\leq q.$$
 The physical significance of these models lies in the field of nonlinear elasticity, specifically in homogenisation theory. A particular form of the above class of functionals is the double phase functional given by 
 $$
 u\mapsto\int_{\Omega} (|\nabla u|^p+a(x)|\nabla u|^q)dx,\quad 0\leq a(x)\leq L,\ 1<p<q.
 $$
 This functional was first introduced by Zhikov in \cite{zhikov1}, to model the Lavrentiev phenomenon on strongly anisotropic materials. The study has been continued by Mingione et al. \cite{baroni,colombo} and R\v adulescu et al. \cite{prrzamp,prrpams,radopuscula}.  \par
 
 For the case $p=q$, problem $(P)$ involves the homogeneous $p$-Laplacian operator and the problem takes the following form
 \begin{equation}\label{eqb1}
 -\Delta_p u=f(x)u^{-\de}, \ \ u>0 \ \mbox{ in }\Om; \ u=0 \ \mbox{ on }\pa\Om.
 \end{equation}
 This type of equation has numerous 
 applications in the physical world such as non-newtonian flows in porous media and heterogeneous catalysts. There has been an extensive study in this direction since the pioneering work of Crandall, Rabinowitz and Tartar \cite{crandall},
  see for instance \cite{adimurthi,bocardo,diaz,ghergujfa,haitao,hernand,hirano,lazer} and references therein. In \cite{crandall}, authors studied \eqref{eqb1} for $p=2$ with $f$ as a nonnegative bounded function and $\de>0$. In this work, they proved the existence and uniqueness of solution in $C^2(\Om)\cap C(\ov\Om)$ and behaviour of the solution near the boundary is also discussed when $f=1$. Lazer and Mckenna in \cite{lazer}, considered \eqref{eqb1} when $p=2$ and $f\in C^{\al}(\ov\Om)$ is positive. Authors proved the existence of unique solution in $C^{2+\al}(\Om)\cap C(\ov\Om)$ for all $\de>0$. Moreover, they proved that the solution is not in $C^1(\ov\Om)$ if $\de>1$ and it is in $H^1_0(\Om)$ if and only if $\de<3$. Subsequently, Boccardo and Orsina considered \eqref{eqb1} when the leading differential operator takes the form $-\textnormal{div}(A(x)\na u)$, where $A$ is a bounded elliptic operator and $f$ is either a nonnegative function belonging to some Lebesgue space or a nonnegative bounded radon measure. Here they proved the existence and some Sobolev regularity results. Concerning the case, when $f$ has a singularity, Diaz, Hern\'andez and Rakotoson \cite{diaz} considered the case where $f$ behaves as some negative power of the distance function. Here, regularity of $\na u$ in Lorentz spaces is proved. Furthermore, for the singular problem with $p=2$ having subcritical or critical growth perturbation with respect to the Sobolev embedding, we mention the contribution of Haitao in \cite{haitao}, and Hirano, Saccon and Shioji in \cite{hirano}. For the lower dimension case ($N=2$), 
 Adimurthi and Giacomoni \cite{adimurthi} considered problem \eqref{eqb1} with $0<\de<3$ and a perturbation of critical growth with respect to the Trudinger-Moser embeddings. For a thorough analysis of semilinear elliptic equations with singular nonlinearities we refer to the monograph by Ghergu and R\u adulescu \cite{ghergu} and an overview article by Hern\'andez, Mancebo and Vega \cite{hernand1}.\par
 For the quasilinear case, that is $p\neq 2$, Giacomoni and Sreenadh \cite{giacsree} studied \eqref{eqb1} with a perturbation having $p-1$ superlinear growth and $f(x)=\la$, a real parameter. Authors proved that there exists a weak solution in $W^{1,p}_0(\Om)\cap C(\ov\Om)$, for small $\la>0$, if and only if $\de<2+1/(p-1)$. Subsequently, 
 Giacomoni, Schindler and Tak\'a\v{c} \cite{giacomoni} studied \eqref{eqb1} with subcritical and critical perturbation with respect to Sobolev embedding for the case $0<\de<1$ and $f(x)=\la$. 
 Using variational methods, authors proved existence of multiple solutions in $C^{1,\al}(\ov\Om)$, for some $\al\in(0,1)$. Here global multiplicity of solutions is also proved with respect to the parameter $\la$. Thereafter, Canino, Sciunzi and Trombetta \cite{canino}, and Bougherara, Giacomoni and Hernandez \cite{bough} studied problem \eqref{eqb1} under different summability conditions on $f$. Under the assumption that $f\in L^1(\Om)$, Canino et al. in \cite{canino}, proved the existence result and with higher integrability assumptions, they also obtained the uniqueness result. 
 While in \cite{bough}, authors considered the further case $f\in L^\infty_{loc}(\Om)$, a nonnegative function which behaves like $dist(x,\pa\Om)^{-\ba}$ near the boundary $\pa\Om$, for $\ba\ge 0$. Exploiting the method of sub and supersolution authors proved the existence of a solution for all $\de>1-p$ and $\ba\in[0,p)$. In this work, behaviour near the boundary and Sobolev regularity of the solution are also discussed. Moreover, authors proved the uniqueness result when $1-p<\de<2-\ba+\frac{1-\ba}{p-1}$. For the case of $p=1$, Cicco, Giachetti, Oliva and Petitta \cite{decicco} studied \eqref{eqb1} with the nonlinear term $f(x)h(u)$, where $h$ has a singularity at $0$. Under certain assumptions on $f$ and $h$, authors proved the existence, uniqueness and regularity result.\par
As far as the equations  with nonhomogeneous operators involving singular nonlinearity are concerned, we would like to draw the attention of readers towards the recent works Kumar, R\v adulescu and Sreenadh \cite{dpk} and Papageorgiou, R\v adulescu and Repov\v s \cite{papageorg}. In \cite{dpk}, authors consider $(p,q)$-Laplace equation of type $(P)$ with critical growth perturbation with respect to the Sobolev embedding and $f(x)=\la$. Splitting the Nehari manifold authors proved the existence of at least two positive solutions and the  $L^\infty$ estimates. Furthermore, they obtained the global existence result using Perron's method. Whereas in \cite{papageorg}, authors considered the following equation
\begin{align*}
	-\textnormal{div} A(\na u)= \la\nu(u)+f(x,u) \ \ u>0 \ \mbox{ in }\Om; \ u=0 \ \mbox{ on }\pa\Om,
\end{align*}
where $A:\mb R^N\to \mb R^N$ satisfies certain structure condition, $\nu(u)$ behaves like $u^{-\de}$ for $\de\in(0,1)$ and $f$ is a Carath\'eodory function with subcritical growth. Here authors proved the existence of $\la_*>0$ such that the problem has at least two solutions if $\la<\la_*$, at least one solution for $\la=\la_*$ and no solution for $\la>\la_*$. The Sobolev and H\"older regularity were still open questions after these works.\par
Regarding this issue, we consider the following general form of problem $(P)$,
\begin{align}\label{eqb2}
	-\text{div} A(x,u,\na u)=B(x,u,\na u)+g(x) \quad\mbox{in }\Om.
\end{align} 
 We first mention the work of Lieberman \cite{liebm88,liebm91} for solution of quasilinear elliptic equations with nonsingular nonlinearity. In \cite{liebm88}, the author proved that weak solutions of \eqref{eqb2} are in $C^{1,\al}(\ov\Om)$ when $A$ and $B$ satisfy the following structure condition: 
  \begin{equation}\label{lbst}
  \begin{aligned}
    \la(\kappa+|z|)^{p-2}|\xi|^2\le a^{ij}(x,u,z)\xi_i \xi_j\le \La (\kappa+|z|)^{p-2}|\xi|^2 \quad\mbox{and}\\
     |B(x,u,z)|\le \La(\kappa+|z|)^{p}, \quad \mbox{for } (x,u,z)\in\Om\times\mb R\times\mb R^N,
  \end{aligned}
  \end{equation} 
  with $g=0$, where $a^{ij}=\pa A^i/\pa z_j$, $0<\la\le\La$ and $\kappa\in[0,1]$. Subsequently, in \cite{liebm91} interior H\"older continuity result is established for the gradient of solution to \eqref{eqb2} when $A$ and $B$ satisfies structure condition involving special kind of Orlicz functions (including $p,q$ type growth) and $g=0$. Concerning the quasilinear equations with singular nonlinearity, Giacomoni, Schindler and Tak\'a\v{c} \cite{giacomoni,giacomoni2} obtained H\"older continuity results for weak solutions to \eqref{eqb2} when $A(x,u,\na u)=A(x,\na u)$ and $B=0$. In \cite{giacomoni}, following the approach of \cite{liebm88}, authors proved that the weak solution, which behaves like the distance function near the boundary, is in $C^{1,\al}(\ov\Om)$, for some $\al\in(0,1)$, when $0\le g\le C d(x)^{-\sg}$ for $\sg<1$ and $d(x):=dist(x,\pa\Om)$. In the latter work \cite{giacomoni2}, authors proved that solution $u\in W^{1,p}_0(\Om)$, is in $C^{0,\al}(\ov\Om)$ when $0\le u\le C d(x)^{\sg'}$ and $0\le g(x) \le C d(x)^{-\sg}$ with $0<\sg'<\sg<\sg'+1$. \par
Inspired from above discussion, in this work we consider singular problems driven by the nonhomogeneous $(p,q)$-Laplace operator. We assume that $f\in L^\infty_{loc}(\Om)$ satisfies the following condition: 
 \begin{align}\label{eqf}
 	c_1 \; d(x)^{-\ba} \le f(x) \le c_2 \; d(x)^{-\ba} \quad \mbox{in } \Om_\varrho,
 \end{align}
 where $c_1,c_2$ are nonnegative constants, $\ba\ge 0$ and $\Om_\varrho:= \{ x\in\ov\Om : d(x)<\varrho \}$ for $\varrho>0$. To prove the existence of a weak solution, we perturb the problem $(P)$ by taking the nonlinear term as $f_\e\le f$, which does not contain any singularity, and replacing $u^{-\de}$ by $(u+\e)^{-\de}$. Thus the standard Schauder fixed point theory and the elliptic regularity theory can be applied to get the existence of a unique  solution $u_\e\in C^{1,\al}(\ov\Om)$ (see Lemma 2.1). By establishing comparison of $u_\e$ with some function of the distance function, we prove convergence of $u_\e$ to $u$, the minimal weak solution to problem $(P)$.
 Due to the nonhomogeneous nature of the leading operator, unlike the case of $p$-Laplace equation, we can not use some scalar multiple of eigenfunctions of $-\De_p$ to obtain the suitable sub and super solution involving the distance function. To overcome this difficulty, we exploit $C^2$ regularity of the boundary $\pa\Om$, and the fact that the distance function is $C^2$ in some neighborhood of the boundary. We use this in place of the first eigenfunction of $-\De_p$ to construct a suitable sub and super solutions.
 The aforementioned behaviour of $u_\e$ near the boundary helps us to establish the optimal Sobolev regularity for the weak solution to $(P)$ obtained as the limit of $u_\e$, that is, we prove the existence of a constant $\rho_0\ge 1$ such that $u^\rho\in W^{1,p}_0(\Om)$ if and only if $\rho>\rho_0$. Another application of this boundary behaviour is that by comparison with suitable $u_\e$, we establish the non-existence result for the case of $\ba\ge p$. This result is new even for the homogeneous quasilinear elliptic operators like $p$-Laplacian. Moreover, we prove a comparison principle for sub and super solution of $(P)$ in $W^{1,p}_{loc}(\Om)$ for the case of $\ba<2-\frac{1}{p}$. Using suitablely the Hardy inequality, this result improves former contribution even for the operators like $p$-Laplacian, by considering a new notion of solutions and a larger class of weight functions $f$. In \cite{giacomoni2}, authors obtained comparison principle when the solution is in the energy space, $W^{1,p}_0(\Om)$ while Canino et al. in \cite{canino} considered the case when $f$ belongs to some Lebesgue space.  A direct consequence is the uniqueness result for the case $\ba<2-\frac{1}{p}$. \par
 Since the boundary $\pa\Om$ is $C^2$, it follows from \cite[Lemma 14.6, p. 355]{gilbarg} that there exists $\mu>0$ such that $d\in C^2(\Om_\mu)$.  Without loss of generality, we may assume $\varrho\le \min\{\frac{\mu}{2},1\}$, so that $|\De d|\in L^\infty(\Om_\varrho)$ and \eqref{eqf} also holds.
We define the notion of weak solution to $(P)$ as follows.
 \begin{Definition}\label{dfn1}
  A function $u\in W^{1,p}_{loc}(\Om)$ is said to be a weak sub-solution (resp. super-solution) of problem $(P)$ if the following holds
 	\begin{enumerate}
 	  \item[(i)] for every $K\Subset \Om$, there exists a constant $C_K>0$ such that $u\ge C_K$ in $K$,
 	  \item[(ii)] for all $\phi\in C_c^\infty(\Om)$, with $\phi\ge 0$ in $\Om$,
 	 	  \begin{align*}
 	 	  	\int_{\Om} |\na u|^{p-2} \na u\na \phi+ \int_{\Om} |\na u|^{q-2} \na u\na \phi \le( \text{ resp.}\ge) \int_{\Om} f(x)\;u^{-\de}\phi,
 	  	  \end{align*}
 	 \item[(iii)] there exists $\ga\ge 1$ such that $u^\ga \in W^{1,p}_0(\Om)$.
 	 \end{enumerate}
  A function which is both sub and super solution of $(P)$ is called a weak solution.
 \end{Definition}
 We remark that the definition of weak solution considered above is a weaker notion of solution with respect to \cite{giacomoni,giacomoni2}. Moreover, the condition $(iii)$ in the above definition appears due to lack of the trace mapping in $W^{1,p}_{loc}(\Om)$ and this also implies the following definition of the boundary datum of $u$. 
\begin{Definition}
	We say that $u\le 0$ on $\pa\Om$, if $(u-\e)^+\in W^{1,p}_0(\Om)$ for every $\e>0$ and $u=0$ on $\pa\Om$ if $u\ge 0$ in $\Om$ and $u\le 0$ on $\pa\Om$.
\end{Definition}
\begin{Definition}
 We say a weak solution $u$ of $(P)$, is in the conical shell $\mc C_{d_{\ba,\de}}$ if it is continuous and satisfies the following
 \begin{equation*}
	\left\{\begin{array}{lr}
		\eta \; d(x)\le u(x) \le \Ga \; d(x)  &\mbox{ if }\ba+\de<1, \\
		\eta d(x) \log^\frac{1}{p-\ba}\Big(\frac{L}{d(x)}\Big) \le u(x) \le \Ga d(x) \log^\frac{1}{p-\ba}\Big(\frac{L}{ d(x)}\Big) &\mbox{ if } \ba+\de=1, \\
		\eta d(x)^{\frac{p-\ba}{p-1+\de}}  \le u(x) \le \Ga  d(x)^{\frac{p-\ba}{p-1+\de}} &\mbox{ if }\ba+\de>1,
	\end{array}
	\right.
\end{equation*}
 for some positive constants $\eta,\Ga>0$ and $L>0$ is sufficiently large.
\end{Definition}
Now, we state our main existence result.
 \begin{Theorem}\label{thm1}
  Let $\ba\in[0,p)$, then problem $(P)$ admits a weak minimal solution $u\in W^{1,p}_{loc}(\Om)\cap\mc C_{d_{\ba,\de}}$, in the sense of definition \ref{dfn1}. Moreover, $u^\rho\in W^{1,p}_0(\Om)$ if and only if $\rho>\rho_0:=\frac{(p-1)(\ba+\de-1)}{(p-\ba)}>0$. Further, $u\in W^{1,p}_0(\Om)$  if and only if  $\de < 2+ \frac{1-\ba p}{p-1}$.
 \end{Theorem}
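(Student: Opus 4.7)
The plan is to construct the minimal weak solution by approximation from below through a family of non-singular problems, extract sharp two-sided boundary asymptotics by comparison with explicit barriers built from the distance function, and then use these asymptotics to pin down the optimal Sobolev exponent. For $\e>0$ I would first solve the regularized problem
\begin{equation*}
-\De_p u_\e -\De_q u_\e = f_\e(x)(u_\e+\e)^{-\de}\ \ \text{in } \Om,\quad u_\e=0\ \text{on }\pa\Om,
\end{equation*}
with $f_\e=\min\{f,1/\e\}\in L^\infty(\Om)$. This fits the framework of Lemma 2.1: Schauder's fixed point combined with Lieberman's $C^{1,\al}$ estimates yields a positive $u_\e\in C^{1,\al}(\ov\Om)$, and the weak comparison principle for the non-singular operator makes $\{u_\e\}_\e$ monotone in $\e$, so $u_\e\downarrow u$ pointwise as $\e\to 0$.

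The heart of the argument is the construction of $\e$-uniform barriers on $\Om_\varrho$ of the form $\ul v=\eta\,\phi(d)$ and $\ov v=\Ga\,\phi(d)$, with
\begin{equation*}
\phi(t)=\begin{cases} t & \text{if } \ba+\de<1,\\ t\,\log^{1/(p-\ba)}(L/t) & \text{if } \ba+\de=1,\\ t^{(p-\ba)/(p-1+\de)} & \text{if } \ba+\de>1.\end{cases}
\end{equation*}
Because $d\in C^2(\Om_\mu)$ with $|\na d|=1$ and $|\De d|\in L^\infty(\Om_\varrho)$, one can compute $-\De_p\phi(d)-\De_q\phi(d)$ explicitly; the defining exponent of $\phi$ is chosen so that the leading singular contribution matches $f(x)\phi(d)^{-\de}$ near $\pa\Om$. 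Since $p>q$ and the exponent of $\phi$ is $\le 1$ in the singular regimes, the $p$-Laplacian term dominates the $q$-Laplacian one near $\pa\Om$, and the latter is absorbed by shrinking $\varrho$ and tuning $\eta,\Ga$. Weak comparison for the approximate problem then gives $\eta\,\phi(d)\le u_\e\le \Ga\,\phi(d)$ in $\Om_\varrho$ uniformly in $\e$, while the strong maximum principle furnishes an interior lower bound $u_\e\ge C_K>0$ on each $K\Subset\Om$.

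Armed with these uniform bounds, localized testing produces uniform $W^{1,p}_{loc}(\Om)$ estimates for $u_\e$. A Boccardo--Murat-type argument exploiting monotonicity of the $(p,q)$-operator yields a.e.\ convergence of $\na u_\e$, enabling passage to the limit in the weak formulation against $\phi\in C_c^\infty(\Om)$. The pointwise barriers survive in the limit, placing $u\in\mc C_{d_{\ba,\de}}$; condition $(iii)$ of Definition \ref{dfn1} holds for $\ga$ large enough that $u^\ga\lesssim d$ near $\pa\Om$. Minimality is deduced by comparing any weak super-solution $v$ with $u_\e$ using the comparison principle for the non-singular auxiliary problem and letting $\e\to 0$. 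The Sobolev threshold is then extracted by testing the limit equation with $u^{p\rho-p+1}$, giving the estimate
\begin{equation*}
\int_\Om |\na u^\rho|^p\,dx \le C\int_\Om f(x)\,u^{p\rho-p+1-\de}\,dx,
\end{equation*}
together with a matching Hardy-type lower bound; the pointwise two-sided asymptotics show that both sides are finite exactly when $\rho>\rho_0$. The statement $u\in W^{1,p}_0(\Om)\iff\de<2+(1-\ba p)/(p-1)$ follows by specialising to $\rho=1$.

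The principal obstacle is the barrier construction in the second step: the $(p,q)$-operator is not homogeneous, so the standard device of scaling the first eigenfunction of $-\De_p$ is unavailable. Working instead with $\phi(d)$ and exploiting the $C^2$ regularity of $d$, one must carefully balance the $p$- and $q$-contributions so that the correct boundary exponent is isolated uniformly in $\e$. The critical logarithmic regime $\ba+\de=1$ requires a separate tailored computation, since neither the linear nor the pure-power choice of $\phi$ captures the leading-order matching with the source $f(x)u^{-\de}$.
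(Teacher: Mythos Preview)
Your overall strategy parallels the paper's, but the barrier step --- which you rightly flag as the crux --- contains a genuine gap. The claim that $\eta\,\phi(d)$ is a subsolution of the regularized problem uniformly in $\e$ fails when $\ba+\de>1$: with $\tau=(p-\ba)/(p-1+\de)\in(0,1)$ one computes
\[
-\De_p(\eta d^\tau)-\De_q(\eta d^\tau)\ \sim\ c\,\eta^{p-1}\,d^{(\tau-1)(p-1)-1}\ \longrightarrow\ +\infty\quad\text{as }d\to 0,
\]
whereas $f_\e(x)(\eta d^\tau+\e)^{-\de}$ stays bounded (by $\e^{-1-\de}$ with your truncation $f_\e=\min\{f,1/\e\}$). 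The subsolution inequality is therefore violated in a boundary strip for every fixed $\e>0$, and weak comparison for the approximate problem cannot deliver $\eta\phi(d)\le u_\e$. The paper repairs this by desingularizing \emph{both} sides in a matched way: it uses $\e$-dependent barriers $v_\e=\eta\big((d+\e^{1/\tau})^\tau-\e\big)$ together with the tailored regularization $f_\e(x)=\big(f(x)^{-1/\ba}+\e^{1/\tau}\big)^{-\ba}$, so that near $\pa\Om$ both $-\De_p v_\e-\De_q v_\e$ and $f_\e\,(v_\e+\e)^{-\de}$ scale like $(d+\e^{1/\tau})^{-\ba-\de\tau}$ and the comparison goes through (Lemmas~\ref{lem7} and~\ref{lem10}). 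Without a working lower barrier you also cannot bound $f_\e(u_\e+\e)^{-\de}$ in $L^\infty_{loc}(\Om)$ uniformly in $\e$, which the paper needs to extract the interior $L^\infty$ estimate on $u_\e$ that closes the upper-barrier comparison on $\Om\setminus\Om_\varrho$.

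Two smaller points. First, with either regularization the sequence $u_\e$ is \emph{increasing} as $\e\downarrow 0$ (both $f_\e$ and $t\mapsto(t+\e)^{-\de}$ increase), so $u_\e\uparrow u$, not $u_\e\downarrow u$; this matters for your minimality argument. Second, the paper does not run the approximation at all in the regime $\ba+\de<1$: it obtains $u$ directly by minimizing the convex coercive energy on $W^{1,p}_0(\Om)$ (Lemma~\ref{lem8}), gets the lower bound $u\ge c_1 d$ from comparison with the solution of $-\De_p w-\De_q w=\rho$, and derives the upper bound $u\le Cd$ via a separate ODE-based barrier (Proposition~\ref{prop2}); for the passage to the limit when $\ba+\de\ge 1$ it invokes Lieberman's interior $C^{1,\al}$ estimates to obtain $C^1_{loc}$ convergence of $u_\e$, rather than a Boccardo--Murat argument.
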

 To obtain the uniqueness result, we establish the following weak comparison principle.
\begin{Theorem}\label{thm4}
  Let $\ba<2-\frac{1}{p}$ and $u,v\in W^{1,p}_{loc}(\Om)$ be sub and super solution of $(P)$, respectively in the sense of definition \ref{dfn1}. Then, $u\le v$ a.e. in $\Om$.
\end{Theorem}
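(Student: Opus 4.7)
The strategy is to use a truncated and cut-off version of $(u-v)^+$ as test function in the subsolution inequality for $u$ and the supersolution inequality for $v$, subtract the two, and exploit the monotonicity of the $p$- and $q$-Laplace terms to force $(u-v)^+\equiv 0$. Since neither $u$ nor $v$ is a priori in $W^{1,p}_0(\Om)$ and Definition~\ref{dfn1}(ii) only permits $\phi\in C_c^\infty(\Om)$, the candidate test function cannot be taken to be $(u-v)^+$ directly. Fix a smooth cutoff $\eta_\e\in C_c^\infty(\Om)$ with $\eta_\e\equiv 1$ on $\Om\setminus\Om_{2\e}$, $\eta_\e\equiv 0$ on $\Om_\e$ and $|\na\eta_\e|\le C/\e$, and for $k>0$ set $T_k(s):=\min\{s,k\}$. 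Then $\phi_{\e,k}:=T_k((u-v)^+)\eta_\e$, after a standard mollification, is admissible in Definition~\ref{dfn1}(ii).

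Plugging $\phi_{\e,k}$ into both inequalities and subtracting gives
\begin{align*}
  \sum_{r\in\{p,q\}}\int_{\Om}\ld|\na u|^{r-2}\na u-|\na v|^{r-2}\na v,\na\phi_{\e,k}\rd\;\le\;\int_{\Om} f(x)(u^{-\de}-v^{-\de})\phi_{\e,k}.
\end{align*}
On $\{u>v\}$ one has $u^{-\de}-v^{-\de}\le 0$ and $\phi_{\e,k}\ge 0$, so the right-hand side is nonpositive. Decomposing $\na\phi_{\e,k}=\eta_\e\,\na T_k((u-v)^+)+T_k((u-v)^+)\,\na\eta_\e$, the first summand contributes pointwise nonnegatively thanks to the classical $r$-Laplace monotonicity inequalities, while the second yields an error term supported on the shell $\Om_{2\e}\setminus\Om_\e$,
\begin{align*}
  R_\e\;:=\;\sum_{r\in\{p,q\}}\int_{\Om_{2\e}\setminus\Om_\e}\bigl(|\na u|^{r-1}+|\na v|^{r-1}\bigr)\,T_k((u-v)^+)\,|\na\eta_\e|.
\end{align*}

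The central obstacle, and the point where the sharp assumption $\ba<2-\tfrac{1}{p}$ enters, is to show that $R_\e\to 0$ as $\e\to 0$. Using the boundary profile from Theorem~\ref{thm1} (i.e.\ $u,v\in\mc C_{d_{\ba,\de}}$) to control $(u-v)^+$ by a power of the distance and to bound $|\na u|,|\na v|$ by the derivative of that profile, together with H\"older's inequality in the shell and the Hardy inequality $\int_{\Om} d(x)^{-p}|w|^p\le C\int_{\Om}|\na w|^p$ for $w\in W^{1,p}_0(\Om)$, one verifies that the exponent of $\e$ produced in $R_\e$ is strictly positive in each of the three regimes $\ba+\de<1$, $\ba+\de=1$, $\ba+\de>1$ exactly when $\ba<2-\tfrac{1}{p}$. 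This sharp book-keeping of exponents is the technical heart of the argument.

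With $R_\e\to 0$ in hand, sending $\e\to 0$ yields
\begin{align*}
  \sum_{r\in\{p,q\}}\int_{\Om}\ld|\na u|^{r-2}\na u-|\na v|^{r-2}\na v,\na T_k((u-v)^+)\rd\;\le\;0,
\end{align*}
and since each integrand is pointwise nonnegative it must vanish a.e.\ on $\{0<(u-v)^+<k\}$, forcing $\na T_k((u-v)^+)=0$ there. Letting $k\to\infty$ by monotone convergence gives $\na(u-v)^+=0$ a.e.\ in $\Om$, so $(u-v)^+$ is constant on each connected component; Definition~\ref{dfn1}(iii) and the notion of vanishing trace introduced after it ensure that this constant is zero, concluding $u\le v$ a.e.\ in $\Om$.
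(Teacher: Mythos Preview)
Your proposal has a genuine gap at the crucial step, namely the estimate of the boundary remainder $R_\e$. You invoke ``the boundary profile from Theorem~\ref{thm1} (i.e.\ $u,v\in\mc C_{d_{\ba,\de}}$)'' and claim you can ``bound $|\na u|,|\na v|$ by the derivative of that profile''. Neither piece of information is available under the hypotheses of Theorem~\ref{thm4}. The statement concerns an \emph{arbitrary} sub\-solution $u$ and supersolution $v$ in the sense of Definition~\ref{dfn1}; all you know is $u,v\in W^{1,p}_{loc}(\Om)$, $u^{\ga},v^{\ga'}\in W^{1,p}_0(\Om)$ for some $\ga,\ga'\ge 1$, and the interior positivity bound. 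No two-sided comparison with powers of $d$ is assumed, and even for the minimal solution that does lie in $\mc C_{d_{\ba,\de}}$, no \emph{pointwise} gradient bound near $\pa\Om$ is available (these are merely $W^{1,p}_{loc}$ functions, and the $C^{1,\al}$ regularity of Theorem~\ref{thm5} applies only when $\ba+\de<1$). Without such control the term $\int_{\Om_{2\e}\setminus\Om_\e}(|\na u|^{p-1}+|\na v|^{p-1})\,|\na\eta_\e|$ cannot be shown to vanish, and the asserted ``sharp book-keeping of exponents'' producing exactly the threshold $\ba<2-\tfrac{1}{p}$ has no foundation.

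The paper circumvents this difficulty by a genuinely different route. Instead of testing the two inequalities against each other, it introduces an auxiliary truncated energy $\mc I_m$ and minimizes it over the convex set $\mc M=\{\phi\in W^{1,p}_0(\Om):0\le\phi\le v\}$, obtaining a comparison function $w\in W^{1,p}_0(\Om)$ with $w\le v$. The restriction $\ba<2-\tfrac{1}{p}$ enters \emph{only} through a three-factor H\"older/Hardy estimate on $\int_\Om d^{-\ba}\phi$ for $\phi\in W^{1,p}_0(\Om)$, which yields weak lower semicontinuity of $\mc I_m$. One then shows (Step~I) that $w$ is a supersolution of the truncated equation, and (Step~II) that $u\le w+\e$ by testing with $T_k((u-w-\e)^+)$; here the key point is that $w\in W^{1,p}_0(\Om)$, so $(u-w-\e)^+\in W^{1,p}_0(\Om)$ and no boundary-layer cutoff $\eta_\e$ (and hence no problematic remainder $R_\e$) is needed. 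This detour through a $W^{1,p}_0$ minimizer is precisely what allows the argument to go through without any a~priori boundary or gradient information on $u$ and $v$.
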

Next, we have a non-existence result for weak solution of $(P)$. 
\begin{Theorem}\label{thm3}
  Let $\ba\ge p$ in \eqref{eqf}. Then, there does not exist any weak solution of problem $(P)$ in the sense of definition \ref{dfn1}.
\end{Theorem}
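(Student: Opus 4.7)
The argument proceeds by contradiction. Suppose that a weak solution $u$ of $(P)$ with $\ba \ge p$ exists in the sense of Definition \ref{dfn1}. The plan is to show that $u$ must blow up at $\pa\Om$, in direct conflict with condition (iii) of that definition.

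\textbf{Step 1 (Comparison with approximating solutions).} Reuse the regularization scheme underlying Lemma 2.1: for each small $\e > 0$ let $u_\e \in C^{1,\al}(\ov\Om)\cap W^{1,p}_0(\Om)$ solve the non-singular approximating problem, with $f$ replaced by a truncation $f_\e \nearrow f$ and $u^{-\de}$ replaced by $(u+\e)^{-\de}$. Since $f_\e \le f$ and $(u+\e)^{-\de} \le u^{-\de}$, every solution $u$ of $(P)$ is a supersolution of each approximating equation. Testing both weak formulations against $(u_\e-u)^+ \in W^{1,p}_0(\Om)$ (admissible because $u_\e$ has vanishing trace and $u\ge C_K>0$ on compacts) and invoking strict monotonicity of $-\De_p-\De_q$ yields $u_\e \le u$ a.e.\ in $\Om$. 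In addition, the family $\{u_\e\}$ is monotone increasing as $\e \searrow 0^+$.

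\textbf{Step 2 (Unbounded growth of $u_\e$ near $\pa\Om$).} The heart of the argument is to show that $u_\e$ grows unboundedly in some boundary strip as $\e \to 0^+$. The conical-shell ansatz $u \sim d^\al$ with $\al := (p-\ba)/(p-1+\de)$ produces, when $\ba \ge p$, the non-positive exponent $\al \le 0$, so the natural profile does not vanish on $\pa\Om$. Translating this into the regularized setting, for every $M \ge 1$ I construct an admissible subsolution $w_{M,\e} \in W^{1,p}(\Om_\varrho)$ of the regularized equation satisfying $w_{M,\e} = 0$ on $\pa\Om_\varrho$ and $w_{M,\e} \ge M$ on a set $E_M$ touching $\pa\Om$, for all $\e$ small enough. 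The profile of $w_{M,\e}$ is a suitable truncation/cut-off of $\eta\, d^\al$ (or of the logarithmic analogue $\eta(\log(L/d))^{1/(p-1+\de)}$ when $\ba = p$); the subsolution inequality is enforced by the singular right-hand side, using that in the strip where $w_{M,\e}+\e \lesssim \e$ one has $f_\e(w_{M,\e}+\e)^{-\de} \gtrsim c_1 d^{-\ba}\e^{-\de}$, which dominates both principal parts, the $-\De_q$ contribution being of strictly lower order than the $-\De_p$ contribution because $q<p$ and $\al \le 0$. A standard weak comparison principle for the non-singular regularized equation on $\Om_\varrho$ then yields $u_\e \ge w_{M,\e}$ on $E_M$.

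\textbf{Step 3 (Contradiction) and main obstacle.} Combining Steps 1 and 2 gives $u \ge M$ a.e.\ on $E_M$ for every $M \ge 1$, so $u = +\infty$ on $\bigcap_M E_M$, a non-negligible set intersecting every boundary neighborhood. This contradicts condition (iii) of Definition \ref{dfn1}, since $u^\ga \in W^{1,p}_0(\Om)$ for some $\ga \ge 1$ forces $u$ to have vanishing trace on $\pa\Om$. The main obstacle is the explicit construction of $w_{M,\e}$ in Step 2: the non-homogeneity of $-\De_p-\De_q$ precludes a clean scale-invariant ansatz, so the truncation and the boundary cut-offs must be designed carefully and the singular right-hand side must absorb both the $-\De_q$ contribution and the cut-off errors uniformly in $\e$; the borderline case $\ba = p$, which requires the logarithmic profile, is especially delicate because the gain over the error terms is only logarithmic, forcing a judicious choice of the constant $L$.
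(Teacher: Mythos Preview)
Your Step 1 is essentially the paper's starting point as well, though the test function $(u_\e-u)^+$ is not obviously admissible since $u\in W^{1,p}_{loc}(\Om)$ only; the paper handles this via the truncations $T_k$ and compactly supported approximations (see the argument around \eqref{eq41}--\eqref{eq43}). This is fixable.

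The genuine gap is in Steps 2--3. For each fixed $\e>0$ one has $u_\e\in C^{1,\al}(\ov\Om)$ with $u_\e=0$ on $\pa\Om$, so any set $E_M$ on which $u_\e\ge M$ must stay away from $\pa\Om$; as $M\to\infty$ and $\e\to 0$ the sets $E_M$ will shrink towards $\pa\Om$, and $\bigcap_M E_M$ will typically have measure zero. Hence the conclusion ``$u=+\infty$ on a non-negligible set'' does not follow. Moreover, even if you could show $u$ blows up qualitatively at $\pa\Om$, this alone does not contradict $u^{\ga}\in W^{1,p}_0(\Om)$ when $p\le N$: functions in $W^{1,p}_0(\Om)$ may be unbounded near $\pa\Om$. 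You need a \emph{quantitative} lower bound on $u$ in terms of $d(x)$, and your proposed subsolution profile $\eta\,d^{\al}$ with $\al=(p-\ba)/(p-1+\de)\le 0$ is not in $W^{1,p}(\Om_\varrho)$ and does not vanish on $\pa\Om$, so it cannot serve as a comparison function for $u_\e$ without the very truncation that destroys the desired blow-up.

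The paper sidesteps both difficulties by a different device: instead of working at $\ba\ge p$, it fixes an auxiliary exponent $\tl\ba<p$ and a weight $f_{\tl\ba}$ with $m f_{\tl\ba}\le f$ and $f_{\tl\ba}\sim d^{-\tl\ba}$ near $\pa\Om$. The approximated solutions $w_\e$ for this \emph{subcritical} problem already satisfy, by Lemma \ref{lem7}, the explicit lower bound $w_\e\ge\eta\big((d+\e^{1/\tau})^\tau-\e\big)$ with $\tau=(p-\tl\ba)/(p-1+\de)\in(0,1)$, and the same comparison as in your Step 1 gives $w_\e\le u_0$. Letting $\e\downarrow 0$ yields $u_0\ge\eta\,d^{\tau}$. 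The contradiction is then obtained through Hardy's inequality: $u_0^{\ga_0}\in W^{1,p}_0(\Om)$ forces $\int_\Om u_0^{\ga_0 p}d^{-p}<\infty$, while the lower bound gives $\int_\Om u_0^{\ga_0 p}d^{-p}\ge\eta^{\ga_0 p}\int_\Om d^{\tau\ga_0 p-p}$, which diverges once $\tl\ba$ is chosen close enough to $p$ so that $\tau\ga_0 p-p\le -1$. The key idea you are missing is this reduction to the subcritical range $\tl\ba<p$, which converts the problem into one where the barrier has already been built, and the use of Hardy to turn the small-exponent lower bound into a divergent integral.
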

Theorem \ref{thm3} shows that Theorem \ref{thm1} is sharp. Regarding the H\"older regularity of solutions to problem $(P)$, for the case $\de<1$, we study a more general quasilinear form of $(P)$.
Consider the following equation,
\begin{align}\label{Prb}
 -\text{div} A(x,\na u) =B(x,u,\na u)+g(x) \; \mbox{ in }\Om, \quad u=0 \; \mbox{ on }\pa\Om,
\end{align}
where $A: \ov\Om\times\mb R^N \to \mb R^N$ is a continuous function. We assume the following conditions: 
\begin{enumerate}
	\item[(A1)] $|A(x,z)|+|\pa_z A(x,z) z| \le \La (|z|^{q-1}+|z|^{p-1})\le 2\La (1+|z|^{p-1})$,
	\item[(A2)] $z.A(x,z)  \ge \nu |z|^p $,
	\item[(A3)] $\ds\sum_{i=1}^{N} |A^i(x,z)- A^i(y,z)| \le \La (1+|z|^{p-1}) |x-y|^\omega$,
	\item[(A4)] $|B(x,u,z)|\le \La (1+|z|)^p$ for $(x,u,z)\in\Om\times[-M_0,M_0]\times\mb R^N$,
\end{enumerate}
where $0<\nu\le\La$ are constants, $M_0>0$, $1<q<p<\infty$ and $\omega\in(0,1)$. Furthermore, assume $g$ satisfies the following  
 \begin{align}\label{eqg}
 	0\le g(x) \le C d(x)^{-\sg},
 \end{align}
where $\sg\in [0,1)$ and $C>0$ is a constant. First we prove H\"older continuity result up to the boundary for the gradient of the weak solution to \eqref{Prb}.  Consequently, the weak solution to $(P)$ is in $C^{1,\al}(\ov\Om)$ for the case of $\ba+\de<1$. The interior regularity follows from \cite[Theorem 1.7]{liebm91}. Furthermore, we observe that in the structure condition \eqref{lbst} of Lieberman \cite{liebm88} or that of Giacomoni et al. \cite{giacomoni}, we can not choose different $\kappa$ above and below in the inequality (which forces homogeneous type nature) contrary to our assumptions (A1) and (A2). To extend to (A1)-(A4) setting and make the article self contained as much as possible, we write a detailed proof of Lemma \ref{lem4} given crucial oscillating estimates (as in \cite[Lemma 5]{liebm88}).  Thus, taking inspiration from the works of \cite{giacomoni} and \cite{liebm88}, we consider a perturbation of the problem \eqref{Prb} (see \eqref{eq52}). We estimate various quantities involving supremum and oscillation of the gradient of the solution to the perturbed problem by means of the weak Harnack inequality, local maximum principle and suitable barrier arguments. Using these estimates, we establish control over the Campanato norm of the solution $u$ to problem $\eqref{Prb}$, which helps us to finally obtain the H\"older regularity result. Here, we would like to mention that the H\"older boundary regularity result is new for the equations involving singular nonlinearity and the gradient terms.
We state our main regularity result in this case as follows.
\begin{Theorem}\label{thm5}
 Let $u\in W^{1,p}_0(\Om)$ be a weak solution of problem \eqref{Prb} such that $0\le u\le M_0$ in $\Om$. 
 Let $\sg\in[0,1)$ in \eqref{eqg} and suppose there exists $C>0$ such that $0\le u(x)\le C d(x)$ a.e. in $\Om$. Then, there exists a constant $\al\in(0,1)$, depending only on $N,p,\omega,\sg,\nu,\La$, such that $u\in C^{1,\al}(\ov\Om)$  and
 \[ \|u\|_{C^{1,\al}(\ov\Om)} \le C(N,\nu,\La,\omega,p,\sg,M_0,\Om). \]
\end{Theorem}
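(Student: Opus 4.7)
The plan is to reduce the theorem to a uniform boundary $C^{1,\al}$ estimate for a regularized family, since the interior H\"older continuity of $\na u$ on compact subsets of $\Om$ follows from \cite[Theorem 1.7]{liebm91} applied to the Orlicz density $|z|^p+|z|^q$. Concretely, I would truncate the singular source as $g_\e:=\min\{g,1/\e\}$ (the perturbation alluded to in \eqref{eq52}) and solve
\begin{align*}
 -\text{div}\,A(x,\na u_\e)=B(x,u_\e,\na u_\e)+g_\e(x) \quad \text{in }\Om,\qquad u_\e=0\text{ on }\pa\Om,
\end{align*}
obtaining smooth solutions $u_\e\in C^{1,\al}(\ov\Om)$ by the non-singular theory of \cite{liebm88,liebm91}. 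Weak comparison with barriers built from the distance function, combined with the hypothesis $0\le u\le C d(x)$, should give $\e$-independent pointwise bounds $0\le u_\e\le C d(x)$ and the convergence $u_\e\to u$ in $C^{1,\al}_{loc}(\Om)$. The real content of the theorem is thus to show that $\{u_\e\}$ is uniformly bounded in $C^{1,\al}(\ov\Om)$.

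For a boundary point $x_0\in\pa\Om$ I would flatten $\pa\Om$ by a $C^2$-diffeomorphism onto a half-ball $B_R^+$, so that $d(x)\sim x_N$. The central ingredient is the oscillation lemma Lemma \ref{lem4} announced in the introduction, which rephrases \cite[Lemma 5]{liebm88} for the asymmetric $(p,q)$-structure (A1)-(A3): on a half-ball $B_r^+$ one splits into the regime where $\sup|\na u_\e|$ is small (so the $q$-growth dominates) and the regime where it is large (so the $p$-growth dominates), freezes $A(x,\na u_\e)$ at a suitable reference level $z_0$, and compares $u_\e$ to the solution of the associated constant-coefficient homogeneous problem. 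Combining the weak Harnack inequality, the local maximum principle, and explicit barriers of the form $\Psi(x)=\eta x_N - \theta x_N^{1+\tau}$ with $\tau>0$ small---chosen so that $-\text{div}\,A(x,\na\Psi)$ dominates both the natural-growth nonlinearity controlled by (A4) and the truncated source $g_\e$ (this is where $\sg<1$ enters)---should then yield a decay of the form
\begin{align*}
 \underset{B_{\tau r}^+}{\text{osc}}\,\na u_\e \le \kappa\, \underset{B_r^+}{\text{osc}}\,\na u_\e + C\bigl(r^{\omega}+r^{1-\sg}\bigr),
\end{align*}
with $\tau,\kappa\in(0,1)$ independent of $\e$. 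Dyadic iteration produces $\na u_\e\in C^{0,\al_0}$ uniformly up to $\pa\Om$; together with the interior estimate and Campanato's characterization of H\"older spaces via $\ds\dashint_{B_r\cap\Om}|\na u_\e-(\na u_\e)_{B_r\cap\Om}|\le C r^\al$, a covering argument delivers the $\e$-uniform $C^{1,\al}(\ov\Om)$ bound, and Arzel\`a-Ascoli concludes by sending $\e\to 0$.

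The main obstacle I expect is the genuinely non-homogeneous $(p,q)$-ellipticity encoded in (A1)-(A2): in the classical Lieberman framework \cite{liebm88} (and its singular adaptation in \cite{giacomoni}) a single parameter $\kappa\in[0,1]$ governs both the upper and lower bounds of the coefficient matrix, which enforces an essentially homogeneous behaviour. Here (A1) scales like $|z|^{p-1}$ at infinity but carries a genuine $|z|^{q-1}$ term at the origin, so the small-gradient and large-gradient regimes require genuinely different comparison problems ($q$-harmonic vs.\ $p$-harmonic), and the constants in the freezing step must be tracked carefully across these regimes; this is exactly why Lemma \ref{lem4} must be re-proved rather than quoted, and why the theorem is claimed as a new independent result. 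A secondary difficulty is the natural-growth term $|B|\le\La(1+|\na u|)^p$ in (A4): absorbing it in the perturbation estimate requires an a priori $L^\infty$-bound on $\na u_\e$ near $\pa\Om$, which the hypothesis $u\le C d(x)$ and the barrier $\Psi$ supply through tangential differentiation before being propagated to the interior by (A3) and the non-singular Lieberman theory.
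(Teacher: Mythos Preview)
Your proposal diverges from the paper's proof in a way that introduces a real gap. The perturbation in \eqref{eq52} is \emph{not} a truncation $g_\e=\min\{g,1/\e\}$ of the source; it is the frozen-coefficient \emph{homogeneous} comparison problem
\[
-\text{div}\,A(x_0,\na v)=0\ \text{in }B_R^+(x_0),\qquad v=u\ \text{on }\pa B_R^+(x_0),
\]
with the spatial variable frozen (not the gradient level $z_0$). Lemma \ref{lem4} is entirely about this auxiliary $v$: it records the gradient sup-bound, the oscillation decay $\text{osc}_{B_r^+}\na v\le C(r/R)^\vsg(\sup_{B_{R/2}^+}|\na v|+\chi)$, the energy comparison $\int|\na v|^p\le C\int(1+|\na u|)^p$, and the key pointwise bound $\sup_{B_R^+}|u-v|\le CR$. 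The paper then tests the difference of the two equations against $u-v$; (A3), (A4), (B1) and the bound $|u-v|\le CR$ give $\int_{B_R^+}|\na(u-v)|^p\le C\,R^{\ga_0}(R^N+\int_{B_R^+}(1+|\na u|)^p)$, and the Campanato iteration of \cite{simon} together with the regularity of $v$ yields $I(u;r)=\int_{B_r^+}|\na u-(\na u)_r|^p\le Cr^{N+\al p}$. No approximating family $u_\e$ is ever introduced, and the nonhomogeneity is handled through the Orlicz theory of \cite{liebm91} for $v$, not by a $q$-harmonic/$p$-harmonic regime split.

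The substantive gap in your scheme is the claimed decay $\text{osc}_{B_{\tau r}^+}\na u_\e\le\kappa\,\text{osc}_{B_r^+}\na u_\e+C(r^\omega+r^{1-\sg})$ for the \emph{full} solution $u_\e$. Your barriers $\Psi(x)=\eta x_N-\theta x_N^{1+\tau}$ control $u_\e/x_N$, not $\na u_\e$; passing from the former to a pointwise gradient oscillation requires differentiating the equation and running Moser/Harnack on $D_k u_\e$, which is exactly what the natural-growth term $|B|\le\La(1+|\na u|)^p$ and the singular $g$ obstruct. The paper's device is precisely to offload the differentiation onto $v$ (where the right-hand side is zero and (A3)--(A4) are absent), obtain Lemma \ref{lem4} there via \cite{liebm91,liebm86}, and then recover the estimate on $u$ by the energy comparison $J(u-v;R)$. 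Your regularization layer $g_\e$ is therefore unnecessary, and the step where you ``freeze $A(x,\na u_\e)$ at a reference level $z_0$'' and then absorb $B+g_\e$ by barriers does not produce the asserted oscillation inequality without essentially reinventing the Campanato comparison the paper carries out directly.
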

Next, we prove H\"older continuity result for weak solution, $u\in W^{1,p}_{loc}(\Om)$ of problem $(P)$, for the case $\beta+\delta\ge 1$ that has not been considered so far even for the homogeneous type operators. By taking into consideration $u^\ga$, for some suitable $\ga>1$, we transform the problem $(P)$ to a new quasilinear equation  involving  a form of a weighted $(p,q)$-Laplacian operator and lower order terms (see \eqref{eq25}). Using the behaviour of $u$ near the boundary, we choose $\ga$ appropriately so that the nonlinear term in the transformed equation belongs to $L^\infty(\Om)$. Then, we follow the idea of Ladyzhenskaya and Ural'tseva \cite{ladyzh} to obtain Morrey type estimates on $u^\ga$. This proves H\"older continuity of $u^\gamma$, which in turns implies the continuity result in the sense of H\"older for $u$. The main result in this regard is given as below.
\begin{Theorem}\label{thm6}
 Let $\beta+\delta\ge 1$ and $u\in W^{1,p}_{loc}(\Om)$ be a bounded nonnegative weak solution of problem $(P)$ in the sense of definition \ref{dfn1}. Furthermore, suppose there exists $\Ga,\tl\sg>0$ such that $0\le u(x) \le \Ga d(x)^{\tl\sg}$ a.e. in $\Om$. Then, there exists $\al\in (0,1)$, depending only on $N,p,\|u\|_{L^\infty(\Om)},\tl{\sg},\Ga,\ba$ and $\de$, such that $u\in C^{0,\al}(\ov\Om)$.
\end{Theorem}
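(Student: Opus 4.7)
The plan is to exploit the power substitution $v:=u^\ga$ for a suitably large exponent $\ga>1$, as suggested in the introduction. Differentiating $\na v=\ga u^{\ga-1}\na u$ and using the chain rule to re-express $-\De_p v$ in terms of the PDE $(P)$, the problem transforms into a quasilinear equation for $v$ of the form
\[
	-\De_p v-\operatorname{div}\!\bigl(w(x)|\na v|^{q-2}\na v\bigr)+c_1\,v^{-1}|\na v|^p+c_2\,\tl w(x)|\na v|^q=h(x),
\]
where $c_1,c_2>0$ are explicit constants depending only on $p,q,\ga$, the weights $w(x)=\ga^{p-q}v^{(\ga-1)(p-q)/\ga}$ and $\tl w$ are nonnegative powers of $v$ (vanishing near $\pa\Om$), and the right-hand side is $h(x):=\ga^{p-1}f(x)u^{(\ga-1)(p-1)-\de}$. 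Using $u\le \Ga d(x)^{\tl\sg}$ and \eqref{eqf}, this datum satisfies $0\le h(x)\le C\,d(x)^{\tl\sg[(\ga-1)(p-1)-\de]-\ba}$ near $\pa\Om$; hence choosing $\ga$ large enough that $\tl\sg[(\ga-1)(p-1)-\de]\ge \ba$ puts $h\in L^\infty(\Om)$, and since $u$ is bounded, so is $v$.

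With $h\in L^\infty(\Om)$, the second stage is to extract interior H\"older regularity for $v$ by the iterative scheme of Ladyzhenskaya--Ural'tseva \cite{ladyzh}. On any compact $K\Subset\Om$, Definition \ref{dfn1}(i) supplies $u\ge C_K>0$, which keeps $v$ bounded below and thus $v^{-1}$, $w$ and $\tl w$ all uniformly bounded on $K$. Testing the transformed equation against truncations $(v-k)_+\chi^\theta$, with $\chi\in C_c^\infty(B_{2r})$ a standard cut-off on balls $B_{2r}\Subset K$, I expect a Caccioppoli-type inequality on the super-level sets $\{v>k\}\cap B_r$: the natural-growth gradient terms come with the ``right'' positive sign, and the $c_1$-term is controlled by the elementary identity $v^{-1}(v-k)_+=(1-k/v)\mathbf{1}_{\{v>k\}}\le 1$. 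De Giorgi iteration then yields a Morrey-type decay
\[
	\int_{B_r(x_0)}|\na v|^p\,dx\le C(K)\,r^{N-p+p\al_0}\qquad\text{for every }B_{2r}(x_0)\subset K,
\]
for some $\al_0\in(0,1)$. Campanato's characterisation gives $v\in C^{0,\al_0}_{loc}(\Om)$; since $a\mapsto a^{1/\ga}$ is $(1/\ga)$-H\"older on $[0,\infty)$ (by the elementary inequality $(s-t)^\ga\le s^\ga-t^\ga$ for $s\ge t\ge 0$, $\ga\ge 1$), this passes back to $u\in C^{0,\al_0/\ga}_{loc}(\Om)$.

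Global H\"older regularity is then secured by the pointwise boundary estimate $u\le\Ga d^{\tl\sg}$: for $x\in\Om$ and $y\in\pa\Om$ one directly has $|u(x)-u(y)|=u(x)\le \Ga|x-y|^{\tl\sg}$. A standard near/far dichotomy with respect to $d(x)$---near pairs with $|x-y|<\tfrac12\min(d(x),d(y))$ handled by the interior estimate on a ball $B_{|x-y|}(x)\Subset\Om$, far pairs handled by summing the two boundary estimates via the triangle inequality for $d$---patches everything into $u\in C^{0,\al}(\ov\Om)$ with $\al=\min\{\al_0/\ga,\tl\sg\}\in(0,1)$. The main obstacle lies in the Ladyzhenskaya--Ural'tseva step: the weights $w,\tl w$ together with the coefficient $v^{-1}$ in front of $|\na v|^p$ degenerate or blow up as $v\to 0$ near $\pa\Om$, so the interior constants $C(K)$ deteriorate as $K\to\Om$ and the scheme is inherently \emph{interior}. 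The hypothesis $u\le\Ga d^{\tl\sg}$---stronger than mere continuity at $\pa\Om$---is precisely what compensates this deterioration in the final patching step, which is why the theorem cannot dispense with it.
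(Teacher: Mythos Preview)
Your power substitution $v=u^\ga$ and the choice of $\ga$ so that the right-hand side lands in $L^\infty(\Om)$ match the paper exactly, and the transformed equation you write down is equivalent to the paper's \eqref{eq26}. The divergence comes afterwards.

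The paper does \emph{not} rely on the interior lower bound $u\ge C_K$ at all. It tests \eqref{eq26} with the exponential function $\phi=e^{\tau v}\zeta^p$ and simply \emph{drops} the singular lower-order terms $(\ga-1)(p-1)\,v^{-1}|\na v|^p$ and $\ga^{p-q}(\ga-1)(q-1)\,v^{(\ga-1)(p-q)/\ga-1}|\na v|^q$, which appear with nonnegative sign. This yields a Morrey estimate $\int_{B_\rho\cap\Om}|\na v|^p\le C\rho^{N-p}$ that is \emph{uniform up to the boundary}---the constant depends only on $N,p,\|v\|_{L^\infty}$, not on any compact $K$---and then \cite[Theorem~1.1, p.~251]{ladyzh} gives $v\in C^{0,\al_1}(\ov\Om)$ directly. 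No patching is needed.

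Your route instead invokes $u\ge C_K$ to bound $v^{-1}$, obtains interior H\"older regularity with constants $C(K)$ that you yourself say ``deteriorate as $K\to\Om$'', and then attempts a near/far patching. This last step is the gap. For near pairs with $|x-y|<\tfrac12 d(x)$ you would need the H\"older seminorm of $u$ on $B_{d(x)/2}(x)$ to be controlled by an explicit power $d(x)^{-\mu}$ with $\mu$ strictly smaller than your interior exponent; without that, the product $C(d(x))\,|x-y|^{\al_0/\ga}$ need not be bounded by $C|x-y|^\al$ for any fixed $\al$. Definition~\ref{dfn1}(i) gives no quantitative rate for $C_K$, and the hypothesis $u\le\Ga d^{\tl\sg}$ constrains only the \emph{size} of $u$ near $\pa\Om$, not its oscillation there---so it cannot by itself compensate an unquantified blow-up of interior constants. (Your identity $v^{-1}(v-k)_+\le 1$ is a red herring: that term already has the favourable sign and can be discarded; the difficulty in a De~Giorgi scheme is the \emph{sub}-level test $(k-v)_+$, where the analogous quantity $v^{-1}(k-v)_+$ is unbounded.)

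The fix is exactly the paper's: exploit the sign to drop the singular terms, use the exponential test function to absorb the remaining natural-growth pieces, and obtain the energy decay uniformly---including for balls centred on $\pa\Om$. Then the interior/boundary dichotomy is unnecessary.
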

We remark that the preceding theorem complements Theorem A.1 in \cite{giacomoni2} for equations involving $p$-Laplacian with singular nonlinearity, where the solutions are considered to be in the energy space $W^{1,p}_0(\Om)$.
\begin{Corollary}\label{cor1}
 Let $u\in W^{1,p}_{loc}(\Om)$ be either the unique solution or the minimal solution (i.e., obtained as a limit of solution to the approximated problem $(P_\e)$) of problem $(P)$, then
 \begin{enumerate}
 	\item[(1)] $u\in C^{1,\al}(\ov\Om)$, for $\al\in(0,1)$ given by theorem \ref{thm5}, in the case of $\beta+\delta<1$.
 	\item[(2)] $u\in C^{0,\al}(\ov\Om)$, for $\al\in(0,1)$ given by theorem \ref{thm6}, in the case of $\beta+\delta \geq 1$.
 \end{enumerate}	
\end{Corollary}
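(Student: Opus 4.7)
The corollary is a routine consequence of combining the boundary asymptotics from Theorem \ref{thm1} with the regularity statements in Theorems \ref{thm5} and \ref{thm6}. In both scenarios (unique solution via Theorem \ref{thm4}, or minimal solution constructed as a limit of $u_\e$ in Theorem \ref{thm1}), the solution $u$ lives in the conical shell $\mc C_{d_{\ba,\de}}$, so the two-sided estimates on $u$ in terms of $d(x)$ are available. The plan is to recast $(P)$ into the setting of the relevant theorem and check that all its hypotheses follow from these shell estimates.

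\textbf{Proof of (1), the case $\ba+\de<1$.} I write $(P)$ in the form \eqref{Prb} with $A(x,z):=|z|^{p-2}z+|z|^{q-2}z$, $B\equiv 0$, and $g(x):=f(x)u(x)^{-\de}$. Conditions (A1)--(A3) are immediate for this $x$-independent, structurally standard $A$, while (A4) is vacuous. From Theorem \ref{thm1}, in this regime $\eta\,d(x)\le u(x)\le \Ga\,d(x)$; in particular $0\le u\le C\,d(x)$ holds as required in Theorem \ref{thm5}, and $u$ is bounded. Moreover $\rho_0=(p-1)(\ba+\de-1)/(p-\ba)<0<1$, so Theorem \ref{thm1} gives $u\in W^{1,p}_0(\Om)$. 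Finally, combining the lower bound $u\ge \eta\,d(x)$ with \eqref{eqf}, near $\pa\Om$ one has
\[ 0\le g(x)=f(x)u(x)^{-\de}\le C\,d(x)^{-(\ba+\de)}, \]
and since $\ba+\de<1$ this is precisely the hypothesis \eqref{eqg} with $\sg=\ba+\de\in[0,1)$. All assumptions of Theorem \ref{thm5} are met, hence $u\in C^{1,\al}(\ov\Om)$.

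\textbf{Proof of (2), the case $\ba+\de\ge 1$.} I apply Theorem \ref{thm6} directly. Its two nontrivial hypotheses are boundedness of $u$ and a pointwise bound $0\le u(x)\le \Ga\,d(x)^{\tl\sg}$ for some $\tl\sg>0$. In the strict case $\ba+\de>1$, Theorem \ref{thm1} gives $u(x)\le \Ga\,d(x)^{(p-\ba)/(p-1+\de)}$, so one can take $\tl\sg=(p-\ba)/(p-1+\de)$, which is positive because $\ba<p$. In the borderline case $\ba+\de=1$, the shell estimate is $u(x)\le \Ga\,d(x)\log^{1/(p-\ba)}(L/d(x))$, and using $\log(L/t)\le C_\e t^{-\e}$ for any small $\e>0$, this absorbs into $u(x)\le C\,d(x)^{1-\e}$; choosing $\tl\sg=1-\e\in(0,1)$ works. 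In both subcases $u$ is bounded because $d(x)$ is bounded on $\Om$ and the exponent of $d$ is positive. Theorem \ref{thm6} then yields $u\in C^{0,\al}(\ov\Om)$.

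\textbf{Obstacles.} There is no real obstacle once Theorems \ref{thm1}, \ref{thm5}, \ref{thm6} are in hand; the proof is almost bookkeeping. The only mildly delicate point is the logarithmic borderline case $\ba+\de=1$ in part (2), which requires trading the $\log$ factor for an arbitrarily small loss in the power of $d$ in order to match the pure power hypothesis of Theorem \ref{thm6}. One should also verify explicitly that the interior lower bound demanded by Definition \ref{dfn1}(i) (and hence the $u^{-\de}$ term being locally bounded) together with the shell upper bound is what makes $g=f u^{-\de}$ fall under \eqref{eqg} in part (1); this is immediate but worth stating.
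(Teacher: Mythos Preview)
Your proof is correct and follows essentially the same route as the paper: use Theorem \ref{thm1} to place $u$ in the conical shell $\mc C_{d_{\ba,\de}}$, then feed the resulting boundary behaviour into Theorem \ref{thm5} (for $\ba+\de<1$) or Theorem \ref{thm6} (for $\ba+\de\ge 1$). The paper's own argument is a three-line sketch, whereas you have spelled out the hypothesis checks (in particular the bound $g=f u^{-\de}\le C d^{-(\ba+\de)}$ and the logarithmic borderline case), which is entirely appropriate.
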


\begin{Remark}
 We remark that our results are true for a more general class of quasilinear elliptic operators with slight modification in the proofs. Some examples of the differential operators are the following:
 \begin{enumerate}
 \item[(i)] The operator $-\De_p - a(x)\De_q$, for some non-negative function $a\in C(\ov\Om)$.
 \item[(ii)] The operator $-\textnormal{div}\big(|\na u|^{p-2}\na u + a(x) |\na u|^{q-2} \na u  \big)$, where $0\le a(x)\in W^{1,\infty}(\Om)\cap C(\ov\Om)$ with $1<q<p<\infty$.
 \item[(iii)] The operator $-\textnormal{div}\big(a(x)|\na u|^{p-2}\na u +  |\na u|^{q-2} \na u  \big)$, where $0<\inf_{\bar\Om} a(x)\le a(x)\in  C^1(\ov\Om)$ with $1<q<p$. However, the theory fails when we assume only $a(x)\ge 0$, which involves more delicate theory as its study is done in  Musielak-Sobolev spaces instead of $W^{1,p}_0(\Om)$ and the nature and behaviour of $a(x)$ dictate the ellipticity of the operator.
 \end{enumerate}
\end{Remark}
Summarizing, we observe that the leading operator in problem $(P)$ is of  non-homogeneous nature, that is, there does not exist any $\al>0$ such that  $A(t\xi)= t^\al A(\xi)$ for all $\xi\in\mb R$ and $t>0$, where $A(\xi):= -\De_p \xi -\De_q \xi$ and the nonlinear term is doubly singular in the sense that it involves two singular terms $u^{-\de}$ and the weight function which blows up near the boundary. Thus, the novelty of the paper lies in the following \\
(i) Proving behaviour of the solution near the boundary (see Lemmas \ref{lem7}, \ref{lem10} and Prop. \ref{prop2}) where the standard scaling argument fails. Whereas its homogeneous counterpart, i.e., $p=q$, uses the first eigenfunction associated with the operator $-\De_p$ with homogeneous Dirichlet boundary condition. Among other usages, this behaviour helps us to obtain Sobolev regularity and (in a forthcoming paper) to construct suitable sub and super solution to certain elliptic problems. \\
(ii) The weak comparison principle where the nonlinear term involves singularity w.r.t. the unknown function and a weight function multiplied to it, which blows up near the boundary. Here the sub and super solutions of the equation may not be in the energy space. Proofs available in the literature, even involving the homogeneous operator $-\De_p$, does not deal with the singular weight function.\\
(iii) We prove the non-existence result for weak solution of $(P)$ when the weight function grows higher than certain negative power (in this case it is $p$). This result is new even for the case $p=q$. \\
(iv) We obtain a completely new H\"older continuity for the gradient of weak solution of some quasilinear equation involving lower order terms with nonhomogeneous nature and singular nonlinearity.\\
(v) We prove H\"older continuity results for solutions of $(P)$ when $\ba+\de\ge 1$ without imposing the condition that it should lie in the energy space. The theorem in this regard generalizes the available result even for equations involving $p$-Laplacian with singular nonlinearity where the solution is required to be in the energy space. This result has many applications, for example, in proving the existence of solution to singular quasilinear systems by Schauder fixed point theorem.\par 
\noindent Turning to the layout of the paper: In Section 2, we establish our main existence theorem and optimal Sobolev regularity, here we prove Theorem \ref{thm1}. In Section 3, we prove Theorem \ref{thm4} and consequently, we obtain the uniqueness result. Here we provide proof of Theorem \ref{thm3}. In Section 4, we establish the H\"older regularity results, precisely we prove Theorems \ref{thm5} and \ref{thm6}.

\section{Existence results}
 First we consider the following perturbed problem 
   \begin{equation*}
    (P_\e) \; \left\{\begin{array}{rllll}
       -\Delta_{p}u_\e -\Delta_{q} u_\e & = f_\e(x) \big(u_\e+\e\big)^{-\delta},\; u_\e>0 \text{ in }\; \Om, \\
       u_\e&=0 \text{ on }  \pa\Om,
     \end{array}
     \right.
   \end{equation*}
 where \begin{align*}
 	f_\e(x):= \begin{cases}
 	 \big( f(x)^{\frac{-1}{\ba}}+ \e^\frac{p-1+\de}{p-\ba} \big)^{-\ba} \quad \mbox{ if }f(x)>0 \\
 	 0 \qquad\qquad \mbox{otherwise}.
 	\end{cases}
 \end{align*}
 It is easy to observe that, for $\ba<p$, the function $f_\e$ increases as $\e \downarrow 0$ and $f_\e \le f$ for all $\e>0$.
 \begin{Lemma}
  For each $\e>0$, there exists a unique solution $u_\e\in W^{1,p}_0(\Om)$ of $(P_\e)$. Furthermore, for $\ba<p$, the sequence $\{u_\e\}$ is increasing as $\e\downarrow 0$ and for each $\Om'\Subset \Om$, there exists $C_{\Om'}>0$ such that for all $\e>0$,
    \begin{align} \label{eq32}
    u_\e\ge C_{\Om'} \quad \mbox{in }\Om'. 
    \end{align}
 \end{Lemma}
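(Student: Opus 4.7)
The plan is to treat each assertion by a standard sequence of fixed-point, monotonicity, and maximum-principle arguments. For existence, fix $\e>0$ and observe that $f_\e\in L^\infty(\Om)$ (since by definition $f_\e(x)\le \e^{-\ba(p-1+\de)/(p-\ba)}$) and that for any $v\in C(\ov\Om)$ the truncated right-hand side $f_\e(x)(v^++\e)^{-\de}$ is bounded by $\|f_\e\|_\infty\,\e^{-\de}$. I define $T:C(\ov\Om)\to C(\ov\Om)$ by $Tv=w$, where $w\in W^{1,p}_0(\Om)$ is the unique nonnegative solution of
\[ -\De_p w-\De_q w=f_\e(x)(v^++\e)^{-\de}\ \text{in}\ \Om,\quad w=0\ \text{on}\ \pa\Om. \]
Existence and uniqueness of $w$ follow from the strict monotonicity of $-\De_p-\De_q$ on $W^{1,p}_0(\Om)$, and $w\ge 0$ by testing with $-w^-$. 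By the Lieberman-type $C^{1,\al}(\ov\Om)$ regularity for the $(p,q)$-Laplacian with bounded datum, the image $T(C(\ov\Om))$ lies in a precompact subset of $C(\ov\Om)$; continuity of $T$ follows from the uniform convergence $(v_n^++\e)^{-\de}\to (v^++\e)^{-\de}$ in $L^\infty$ along with stability of weak solutions to the nondegenerate equation. Schauder's theorem then yields a fixed point $u_\e\in W^{1,p}_0(\Om)\cap C^{1,\al}(\ov\Om)$.

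For uniqueness, I take two solutions $u_\e^{(1)},u_\e^{(2)}$ and test the difference of the equations with $(u_\e^{(1)}-u_\e^{(2)})^+\in W^{1,p}_0(\Om)$. Strict monotonicity of $-\De_p-\De_q$ contributes a nonnegative term on the left, while $t\mapsto(t+\e)^{-\de}$ being strictly decreasing makes the right-hand side nonpositive on $\{u_\e^{(1)}>u_\e^{(2)}\}$; hence $u_\e^{(1)}\le u_\e^{(2)}$, and symmetry closes the argument. The monotonicity of $\{u_\e\}$ in $\e\downarrow 0$ follows by the same test-function mechanism: for $0<\e_1<\e_2$, the assumption $\ba<p$ gives $f_{\e_1}\ge f_{\e_2}$ pointwise and $\e_1<\e_2$ gives $(u_{\e_1}+\e_1)^{-\de}\ge(u_{\e_1}+\e_2)^{-\de}$, so that $u_{\e_1}$ is a supersolution of the equation satisfied by $u_{\e_2}$; testing the difference with $(u_{\e_2}-u_{\e_1})^+$ as above yields $u_{\e_2}\le u_{\e_1}$.

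Finally, for the uniform interior bound \eqref{eq32}, I fix some $\e_0>0$; the function $u_{\e_0}\in C^{1,\al}(\ov\Om)$ satisfies $-\De_p u_{\e_0}-\De_q u_{\e_0}\ge 0$ with right-hand side not identically zero, so the strong maximum principle of V\'azquez--Pucci--Serrin type for quasilinear operators gives $u_{\e_0}>0$ throughout $\Om$. Continuity on the compact set $\ov{\Om'}\Subset\Om$ yields $u_{\e_0}\ge C_{\Om'}>0$, and the just-proved monotonicity then produces $u_\e\ge u_{\e_0}\ge C_{\Om'}$ for every $0<\e\le\e_0$; the residual range $\e>\e_0$ is inessential since the only use of \eqref{eq32} in the sequel is the limit $\e\downarrow 0$, but one may also obtain an individual positive bound on $\Om'$ directly from the strict positivity of each $u_\e$. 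The principal technical obstacle is verifying that both the strong maximum principle and the uniform $C^{1,\al}$ estimates extend to the nonhomogeneous $(p,q)$-operator, since the classical $p$-Laplacian proofs rely on a homogeneity that is now lost; these extensions are available in the quasilinear regularity literature but must be invoked carefully.
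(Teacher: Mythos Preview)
Your proof is correct and follows essentially the same route as the paper: Schauder fixed point for existence (the paper runs it in $L^p(\Om)$ with the truncation $(|v|+\e)^{-\de}$ and obtains existence of the auxiliary $w$ by direct minimization, whereas you run it in $C(\ov\Om)$ with $(v^++\e)^{-\de}$ and invoke monotonicity---a harmless variation), then the same $(u_\e-u_{\e'})^+$ test-function argument for uniqueness and for monotonicity in $\e$, and finally the strong maximum principle applied to a fixed $u_{\e_0}$ (the paper takes $\e_0=1$) combined with monotonicity to get the uniform interior lower bound.
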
  
\begin{proof}
  For fixed $\e>0$ and for each $v\in L^p(\Om)$, we consider the following auxiliary problem
    \begin{equation}\label{eq31}
     \left\{\begin{array}{rllll}
     -\Delta_{p}w -\Delta_{q} w & = f_\e(x)  \big(|v|+\e\big)^{-\de},\; w>0 \text{ in }\; \Om; \ \
      w&=0 \text{ on }  \pa\Om.
     \end{array}
    \right.
   \end{equation}
 By standard minimization technique we can prove that there exists a unique solution $w\in W^{1,p}_0(\Om)$ of \eqref{eq31}. Indeed, the corresponding energy functional $J: W^{1,p}_0(\Om)\to\mb R$ defined by
  \begin{align*}
  	J(w):= \frac{1}{p} \int_{\Om}|\na w|^p dx+\frac{1}{q} \int_{\Om} |\na w|^q dx- \int_{\Om} f_\e(x) \big(|v|+\e\big)^{-\delta} w ~dx,
  \end{align*}
 is continuous, strictly convex and coercive. We define the operator $S: L^p(\Om)\to L^p(\Om)$ as follows
  \[ S(v)=w,  \]
 where $w$ is the unique solution to \eqref{eq31}. By means of  the Poincar\'e inequality, we observe that 
 \begin{align*}
 	\| S(v) \|_{L^p(\Om)}^p = \| w\|_{L^p(\Om)}^p \le C \| \na w\|_{L^p(\Om)}^p \le C \int_{\Om} (|\na w|^p+|\na w|^q ) &= C \int_{\Om} f_\e(x) \big(|v|+\e\big)^{-\delta} w \nonumber \\
 	&\le C \e^{-\de-\ba/\tau} \int_{\Om} |w| dx \nonumber \\
 	&\le C \e^{-\de-\ba/\tau}   |\Om|^\frac{p-1}{p} \| w\|_{L^p(\Om)},
 \end{align*}
 where $\tau=\frac{p-\ba}{p-1+\de}$. Then, it is standard procedure to verify that $S$ is continuous, compact and invariant on the ball of $L^p(\Om)$ with radius $\big( C \e^{-\de-\ba/\tau}  |\Om|^\frac{p-1}{p} \big)^{1/(p-1)}$. Therefore, by Schauder's fixed point theorem, there exists $u_\e\in W^{1,p}_0(\Om)$ such that $u_\e= S(u_\e)$, that is, $u_\e$ is a solution of $(P_\e)$. Since $f_\e(x) \big( |v|+\e\big)\ge 0$, by standard elliptic regularity theory, we deduce that $u_\e\ge 0$ and $u_\e\in L^\infty(\Om)$. Consequently, regularity result of Theorem \ref{thm5} with $\sg=0$ gives us $u_\e\in C^{1,\al}(\ov\Om)$ and the strong maximum principle of \cite[pp. 111, 120]{pucci} implies $u_\e>0$ in $\Om$. \\
 Next, for the case $\ba<p$, we will prove that the sequence $\{u_\e\}$ is increasing as $\e\downarrow 0$. Let $u_\e$ and $u_{\e'}$ be weak solutions of $(P_\e)$ and $(P_\e')$, respectively with $\e'\le \e$.  We observe that the term on the right in $(P_\e)$ is non-singular, therefore by density argument, we can take $(u_\e-u_{\e'})^+$ as a test function in the weak formulations. Thus, due to the fact $0\le f_\e \le f_{\e'}$, we obtain
  \begin{align*}
  	&\int_{\Om} \big( |\na u_\e|^{p-2} \na u_\e -|\na u_{\e'}|^{p-2} \na u_{\e'} \big) \na(u_\e-u_{\e'})^+ \\
  	&+ \int_{\Om} \big( |\na u_\e|^{q-2} \na u_\e -|\na u_{\e'}|^{q-2} \na u_{\e'} \big) \na(u_\e-u_{\e'})^+ \\
  	&\quad= \int_{\Om} \Big(f_\e(x)\big(u_\e+\e\big)^{-\de} - f_{\e'}(x) \big(u_{\e'}+\e'\big)^{-\de}\Big) (u_\e-u_{\e'})^+ \\ &\quad\le \int_{\Om} f_{\e'}(x) \Big(\big(u_\e+\e\big)^{-\de} - \big(u_{\e'}+\e'\big)^{-\de}\Big) (u_\e-u_{\e'})^+ \le 0. 
  \end{align*}
 Using the inequality: for $p>1$, there exists a constant $C_1=C(p)>0$  such that for all $\xi, \zeta\in\mb R^N$ with $|\xi|+|\zeta|>0$, the following holds
  \begin{align}\label{basicIn}
  	\big(  |\xi|^{p-2} \xi -|\zeta|^{p-2} \zeta \big)\cdot (\xi-\zeta) \ge C_1 \big(|\xi|+|\zeta| \big)^{p-2} |\xi-\zeta|^2,
  \end{align}
 we deduce that 
 \begin{align*}
 	\int_{\Om} \big(|\na u_\e|+|\na u_{\e'}| \big)^{p-2} |\na u_\e-\na u_{\e'}|^2 \le 0.
 \end{align*}
 This implies that $(u_\e-u_{\e'})^+=0$ a.e. in $\Om$ and therefore, $u_\e\le u_{\e'}$ in $\Om$. Consequently, \eqref{eq32} holds for all relatively compact subsets of $\Om$ on the account of $u_1\in C^{1,\al}(\ov \Om)$ and $u_1>0$ in $\Om$. For the case $\ba>p$, it is easy to see that $f_\e$ decreases as $\e\downarrow 0$ and proceeding similarly as above, we can prove that the sequence $\{u_\e\}$ is decreasing as $\e\downarrow 0$. The uniqueness of $u_\e$ follows using similar assertions and arguments used to prove monotonocity of $u_\e$ in $\e$. This completes proof of the lemma.\QED
\end{proof} 
\begin{Lemma}\label{lem7}
 Let $\ba+\de>1$ and $\ba<p$. Suppose $u_\e$ be the solution of $(P_\e)$. Then, there exist constants $\eta,\Gamma>0$, independent of $\e$, such that the following holds for $x\in\Om$,  
 \begin{align*}
  \eta \big(\big( d(x)+ \e^\frac{p-1+\de}{p-\ba} \big)^{\frac{p-\ba}{p-1+\de}} -\e\big) \le u_\e(x) \le \Gamma \big(\big( d(x)+ \e^\frac{p-1+\de}{p-\ba} \big)^{\frac{p-\ba}{p-1+\de}}-\e\big).
 \end{align*}	
 \end{Lemma}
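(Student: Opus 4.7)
\textbf{Proof plan for Lemma \ref{lem7}.} Set $\tau:=\frac{p-\ba}{p-1+\de}\in(0,1)$ (note $\ba+\de>1$ gives $\tau<1$), and define the candidate barrier
\[
\phi_\e(x):=d(x)+\e^{1/\tau},\qquad v_{c,\e}(x):=c\bigl(\phi_\e(x)^\tau-\e\bigr).
\]
Then $v_{c,\e}=0$ on $\pa\Om$ (since $\phi_\e=\e^{1/\tau}$ there), so $v_{c,\e}$ has the right boundary value to compete with $u_\e$. The whole plan is to show that for $\Ga$ large enough, $v_{\Ga,\e}$ is a supersolution of $(P_\e)$ with $v_{\Ga,\e}\ge u_\e$ on an interior surface, and that for $\eta$ small enough, $v_{\eta,\e}$ is a subsolution with $v_{\eta,\e}\le u_\e$ there; then the standard weak comparison principle for $(P_\e)$ (legitimate because the right hand side is now nonsingular and monotone decreasing in $u_\e$) yields the result in a boundary strip, and we extend to the interior using the uniform lower bound \eqref{eq32} and a uniform $L^\infty$ bound on $u_\e$.

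First I would do the explicit computation in $\Om_\mu$, where $d\in C^2$, $|\na d|=1$, and $\Delta d\in L^\infty$. A direct calculation using $\na v_{c,\e}=c\tau\phi_\e^{\tau-1}\na d$ gives
\[
-\Delta_p v_{c,\e}-\Delta_q v_{c,\e}
= (c\tau)^{p-1}\phi_\e^{\al_p}\!\left[(1-\tau)(p-1)-\phi_\e\Delta d\right]
+ (c\tau)^{q-1}\phi_\e^{\al_q}\!\left[(1-\tau)(q-1)-\phi_\e\Delta d\right],
\]
where $\al_p=\tau(p-1)-p$ and $\al_q=\tau(q-1)-q$. The identity $\tau(p-1+\de)=p-\ba$ yields $\al_p=-(\ba+\tau\de)$, and since $p>q$ and $\tau<1$ we have $\al_q>\al_p$, so the $p$-term dominates as $\phi_\e\to 0$. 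On the other hand, using \eqref{eqf} and the definition of $f_\e$ one checks $f_\e(x)\asymp \phi_\e(x)^{-\ba}$ uniformly in $\e$ on $\Om_\varrho$; moreover $v_{c,\e}+\e = c\phi_\e^\tau+(1-c)\e$, which is comparable to $\phi_\e^\tau$ with constants depending only on $c$. Hence the right hand side of $(P_\e)$ evaluated at $v_{c,\e}$ satisfies
\[
f_\e(x)(v_{c,\e}+\e)^{-\de}\asymp c^{-\de}\phi_\e^{-(\ba+\tau\de)}=c^{-\de}\phi_\e^{\al_p}.
\]

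Shrinking $\mu$ if necessary so that $\phi_\e|\Delta d|\le \tfrac{1-\tau}{2}(p-1)$, the bracket involving the dominant exponent is bounded below by $\tfrac{1-\tau}{2}(p-1)>0$. Then taking $c=\Ga$ large makes $(\Ga\tau)^{p-1}\cdot\tfrac{1-\tau}{2}(p-1)\ge C\Ga^{-\de}$ (because $p-1+\de>0$), and $v_{\Ga,\e}$ becomes a supersolution of $(P_\e)$ in $\Om_\mu$. Symmetrically, taking $c=\eta$ small makes $v_{\eta,\e}$ a subsolution (using $\eta^{p-1}\le C\eta^{-\de}$ for small $\eta$ and absorbing the $\Delta d$ and $-\Delta_q$ terms, noting that near the boundary the $q$-term is of lower order). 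On the interior boundary $\{d=\mu\}$, the uniform upper $L^\infty$-bound on $u_\e$ (obtained by Moser iteration applied to the estimate $f_\e(u_\e+\e)^{-\de}\le f_\e u_1^{-\de}$ in $\Om\setminus\Om_\mu$, combined with $u_\e\le u_1$ and $u_1\in L^\infty$) together with $v_{\Ga,\e}\ge \Ga(\mu^\tau-\e)$ allows $\Ga$ to be taken large enough that $v_{\Ga,\e}\ge u_\e$ there; and \eqref{eq32} gives a uniform lower bound on $u_\e$ on $\{d=\mu\}$, so $\eta$ can be taken small enough that $v_{\eta,\e}\le u_\e$ there. Applying the weak comparison principle in $\Om_\mu$ yields $v_{\eta,\e}\le u_\e\le v_{\Ga,\e}$ in $\Om_\mu$, and since $\phi_\e^\tau-\e$ is bounded above and below by positive constants on $\Om\setminus\Om_\mu$, the estimate extends to all of $\Om$ (enlarging $\Ga$ and shrinking $\eta$ one more time).

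The main obstacle I expect is the compatibility on the interior surface $\{d=\mu\}$: this requires a uniform-in-$\e$ $L^\infty$ bound on $u_\e$ (so that $\Ga$ can be chosen independent of $\e$) and the uniform positive lower bound of \eqref{eq32} (so that $\eta$ can be chosen independent of $\e$). The rest is bookkeeping; in particular, the $q$-Laplacian contribution is harmless because $\al_q>\al_p$ and the $\Delta d$ contribution is harmless because we work on a sufficiently thin strip $\Om_\mu$.
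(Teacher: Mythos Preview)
Your barrier $v_{c,\e}=c(\phi_\e^\tau-\e)$, the computation of $-\Delta_p v_{c,\e}-\Delta_q v_{c,\e}$ in the strip where $d\in C^2$, and the comparison with $f_\e(v_{c,\e}+\e)^{-\de}$ are exactly the paper's approach; your observation that $v_{c,\e}+\e=c\phi_\e^\tau+(1-c)\e$ is comparable to $\phi_\e^\tau$ with constants depending only on $c$ is a clean substitute for the two-case split the paper writes out.

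There is one genuine slip in the interior matching for the supersolution. You justify the uniform $L^\infty$ bound on $u_\e$ in part by ``$u_\e\le u_1$ and $u_1\in L^\infty$'', but the monotonicity established in Lemma~2.1 goes the other way: for $\ba<p$ the family $u_\e$ is \emph{increasing} as $\e\downarrow 0$, so $u_\e\ge u_1$ for $\e\le 1$, and you cannot bound $u_\e$ above by $u_1$. (The companion estimate $f_\e(u_\e+\e)^{-\de}\le f_\e u_1^{-\de}$ does survive, precisely because $u_\e\ge u_1$.) Without the false inequality, your Moser step is not yet self-contained. The paper handles this by a specific \emph{order} of the two halves: it first proves the lower bound $v_{\eta,\e}\le u_\e$ in all of $\Om$ (this needs only \eqref{eq32} at the interior interface and no $L^\infty$ information on $u_\e$), and then uses it to control the right-hand side globally via
\[
f_\e(u_\e+\e)^{-\de}\le f\,(v_{\eta,\e}+\e)^{-\de}\le f\,\eta^{-\de}d^{-\tau\de}\in L^\infty_{loc}(\Om),
\]
after which the Ladyzhenskaya--Ural'tseva $L^\infty$ estimate yields a bound $\|u_\e\|_{L^\infty(\Om\setminus\Om_\varrho)}\le K$ independent of $\e$; only then is $\Ga$ chosen so that $w_\e\ge\Ga(\varrho^\tau-\varrho^\tau/2)\ge K\ge u_\e$ on $\Om\setminus\Om_\varrho$, and the comparison in $\Om_\varrho$ closes the upper bound. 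Your variant using $u_1$ in place of $v_{\eta,\e}$ as the lower barrier for the right-hand side is a legitimate alternative, but you must drop the claim $u_\e\le u_1$ and respect the order: lower bound first, then interior $L^\infty$ control, then upper bound.
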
  
\begin{proof}
 Set $\tau=	\frac{p-\ba}{p-1+\de}(\in(0,1))$ and $v_\e =\eta \big( (d(x)+ \e^\frac{1}{\tau} )^\tau -\e\big)$.  Then
 \[ \na v_\e =\eta\tau \big( d(x)+ \e^\frac{1}{\tau} \big)^{\tau-1} \na d. \]
 Since $\De d\in L^\infty(\Om_\varrho)$, there exists $M>0$ such that $|\De d|\le M$ in $\Om_{\varrho}$. Therefore, for $\psi\in C_c^\infty(\Om_{\varrho})$ with $\psi \ge 0$ and noting the fact that $|\na d|=1$, we deduce that
 {\small \begin{align*}
 \int_{\Om_\varrho} -\De_p v_\e \psi &= (\eta\tau)^{p-1} \int_{\Om_\varrho} \big( d(x)+ \e^\frac{1}{\tau} \big)^{(\tau-1)(p-1)} \na d \na\psi \\
 &= (\eta\tau)^{p-1} \int_{\Om_\varrho} \left[  -\De d \; (d(x)+ \e^\frac{1}{\tau} )^{(\tau-1)(p-1)}\psi + (p-1)(1-\tau )  (d(x)+ \e^\frac{1}{\tau} )^{(\tau -1)(p-1)-1} \psi \right] \\
 &\le (\eta\tau)^{p-1} \int_{\Om_\varrho} \big( M (d(x)+ \e^\frac{1}{\tau} )^{(\tau-1)(p-1)} + (p-1) (d(x)+ \e^\frac{1}{\tau} )^{(\tau -1)(p-1)-1} \big) \psi \\
 &\le C (\eta\tau)^{p-1} \int_{\Om_\varrho} (d(x)+ \e^\frac{1}{\tau} )^{(\tau -1)(p-1)-1} \psi
 \end{align*}}
 where $C=2 \max\{M,(p-1) \}$. Similar steps yield
 {\small\begin{align*}
 \int_{\Om_\varrho} -\De_q v_\e \psi \le C (\eta\tau)^{q-1} \int_{\Om_\varrho} (d(x)+ \e^\frac{1}{\tau} )^{(\tau -1)(q-1)-1} \psi \le C (\eta\tau)^{q-1} \int_{\Om_\varrho} (d(x)+ \e^\frac{1}{\tau} )^{(\tau -1)(p-1)-1} \psi.
 \end{align*} }
 Thus, using the definition of $\tau$, we have
 \begin{align*}
 \int_{\Om_\varrho} \big(-\De_p v_\e -\De_q v_\e\big)\psi \le C \big( (\eta\tau)^{p-1} +(\eta\tau)^{q-1} \big) \int_{\Om_\varrho} (d(x)+ \e^\frac{1}{\tau} )^{-\de\tau-\ba} \psi.
 \end{align*}
Therefore, using \eqref{eqf}, we deduce that 
 \begin{align}\label{eq7}
  \frac{1}{f_\e(x)}\big(-\De_p v_\e -\De_q v_\e\big) \le C \big( \eta^{p-1} +\eta^{q-1} \big) (d(x)+ \e^\frac{1}{\tau} )^{-\de\tau} \quad \mbox{ in }\Om_{\varrho}.
 \end{align}
 Next, we observe that $(v_\e+\e)^{-\de} = \big(\eta (d+\e^{1/\tau})^\tau+(1-\eta)\e \big)^{-\de}$ and distinguish the following cases:\\
 \textit{Case (i)}: $\eta (d(x)+\e^{1/\tau})^\tau \ge (1-\eta)\e$ for $x\in\Om$.\\
 In this case, we have
 \[ \big(v_\e(x)+\e \big)^{-\de} \ge 2^{-\de}\eta^{-\de} (d(x)+\e^{1/\tau})^{-\tau\de}.  \]
 Therefore, from \eqref{eq7} for sufficiently small $\eta>0$, independent of $\e$,  we obtain
 \begin{align*}
 \frac{1}{f_\e(x)}\big(-\De_p v_\e -\De_q v_\e\big) \le C \big( \eta^{p-1} +\eta^{q-1} \big) (d(x)+ \e^\frac{1}{\tau} )^{-\de\tau} &\le 2^{-\de}\eta^{-\de} (d(x)+\e^{1/\tau})^{-\tau\de} \\
 &\le \big(v_\e(x)+\e \big)^{-\de}.
 \end{align*}
 \textit{Case (ii)}: $\eta (d(x)+\e^{1/\tau})^\tau \le (1-\eta)\e$ for $x\in\Om$.\\
 We have 
 \[ \big(v_\e(x)+\e \big)^{-\de} \ge 2^{-\de}(1-\eta)^{-\de} \e^{-\de}.  \]
 Again, we can choose $\eta>0$ small enough and independent of $\e$ so that 
 \begin{align*}
 	\frac{1}{f_\e(x)}\big(-\De_p v_\e -\De_q v_\e\big) \le C \big( \eta^{p-1} +\eta^{q-1} \big) (d(x)+ \e^\frac{1}{\tau} )^{-\de\tau}  &\le \big( \eta^{p-1} +\eta^{q-1} \big) \e^{-\de} \\ &\le 2^{-\de}(1-\eta)^{-\de} \e^{-\de} \\
 	&  \le \big(v_\e(x)+\e \big)^{-\de}.
 \end{align*}
 Therefore, in either case, we can choose $\eta>0$ sufficiently small and independent of $\e$ such that 
 \begin{align*}
 	-\De_p v_\e -\De_q v_\e \le f_\e(x) \big(v_\e(x)+\e \big)^{-\de} \quad \mbox{in }\Om_{\varrho}.
 \end{align*}
 On account of \eqref{eq32}, we choose $\eta>0$ small enough, independent of $\e$, such that in addition to the preceding relations in cases (i) and (ii), the following holds
 \begin{align*} 
 v_\e(x) \le \eta \; \text{diam}(\Om)^\tau \le C_\varrho \le u_1(x) \le u_\e(x) \quad \mbox{ in }\Om\setminus\Om_\varrho. 
 \end{align*}
 Therefore, by comparison principle, we get $v_\e \le u_\e$ in $\Om_{\varrho}$, that is,
 \begin{align*}
 \eta \big( (d+\e^{1/\tau})^\tau -\e  \big) \le u_\e(x) \quad \mbox{ in }\Om.
 \end{align*}
 Next, we will prove the upper bound for $u_\e$, for this consider 
 $w_\e =\Gamma \big( (d(x)+ \e^\frac{1}{\tau} )^\tau -\e\big)$, where $\Ga$ is a constant. Proceeding as above, for $\psi\in C_c^\infty(\Om)$ with $\psi\ge 0$, we obtain
 {\small
  \begin{align*}
  	\int_{\Om_\varrho} -\De_p w_\e \psi & 
  	= (\Gamma\tau)^{p-1} \int_{\Om_\varrho}\left[  -\De d \; (d(x)+ \e^\frac{1}{\tau} )^{(\tau-1)(p-1)}\psi + (p-1)(1-\tau ) (d(x)+ \e^\frac{1}{\tau} )^{(\tau -1)(p-1)-1} \psi \right] \\
  	&\ge (\Ga\tau)^{p-1} \int_{\Om_\varrho} \big( -M (d(x)+ \e^\frac{1}{\tau} )^{(\tau-1)(p-1)} + (p-1)(1-\tau) (d(x)+ \e^\frac{1}{\tau} )^{(\tau -1)(p-1)-1} \big) \psi.
  \end{align*} }
  On a similar note, we have
 {\small \begin{align}\label{eq62}
 \int_{\Om_\varrho} -\De_q w_\e \psi \ge (\Ga\tau)^{q-1} \int_{\Om_\varrho} \big( -M (d(x)+ \e^\frac{1}{\tau} )^{(\tau-1)(q-1)} + (q-1)(1-\tau) (d(x)+ \e^\frac{1}{\tau} )^{(\tau -1)(q-1)-1} \big) \psi. 
 \end{align}}
 \noi Furthermore, if necessary by reducing $\varrho$ further, we may assume that  there exists $C_3>0$ such that 
 \[ (p-1)(1-\tau) (d(x)+ \e^\frac{1}{\tau} )^{(\tau-1)(p-1)-1}-M (d(x)+ \e^\frac{1}{\tau} )^{(\tau-1)(p-1)} \ge C_3  (d(x)+ \e^\frac{1}{\tau} )^{(\tau -1)(p-1)-1} \ \ \mbox{in }\Om_\varrho,   \]
 and the right hand quantity in \eqref{eq62} is nonnegative (this is possible because $(\tau-1)(p-1)-1\le (\tau-1)(p-1) \le 0$). Therefore, 
 \begin{align*}
 \int_{\Om_\varrho} -\De_p w_\e \psi -\De_q w_\e \psi \ge C_3 (\Ga\tau)^{p-1}  \int_{\Om_\varrho} (d(x)+ \e^\frac{1}{\tau} )^{(\tau -1)(p-1)-1}  \psi.
 \end{align*}
Taking into account \eqref{eqf}, we obtain
\begin{align*}
 \frac{1}{f_\e(x)}\big(-\De_p w_\e -\De_q w_\e\big) \ge C_4 (\Ga\tau)^{p-1}  (d(x)+ \e^\frac{1}{\tau} )^{-\de\tau} \quad \mbox{ in }\Om_{\varrho}.
\end{align*}
By using the lower estimate of $u_\e$ by $v_\e$, for the right hand side of $(P_\e)$, we obtain
\begin{align*}
  f_\e(x) \big(u_\e+\e\big)^{-\de} \le f(x) \big(v_\e+\e\big)^{-\de} \le f(x) \eta^{-\de} d^{-\tau\de}.
\end{align*}
 Since $f\in L^\infty_{loc}(\Om)$, we observe that $f(x) \eta^{-\de} d^{-\tau\de}\in L^\infty_{loc}(\Om)$. Therefore, by $L^\infty$ estimate of \cite{ladyzh}, we get $u_\e\in L^\infty_{loc}(\Om)$ and the bound is independent of $\e$, say $\| u_\e \|_{L^\infty(\Om\setminus\Om_{\varrho})} \le K$. Now, we choose $\Ga$ sufficiently large and  independent of $\e$  satisfying last two inequalities in the following 
\begin{align*}
 w_\e=\Ga \big( (d+\e^{1/\tau})^\tau -\e  \big) \ge \Ga \big( d^\tau - \frac{\varrho^\tau}{2} \big)\ge \Ga \frac{\varrho^\tau}{2} \ge K\ge u_\e(x) \quad \mbox{ in }\Om\setminus\Om_\varrho
\end{align*}
for all $\e<\varrho^\tau/2$. Then, by comparison principle, we get $u_\e \le w_\e$ in $\Om$. 
This completes proof of the lemma.\QED
\end{proof}

\begin{Lemma}\label{lem10}
 Let $\ba+\de=1$ and $u_\e$ be the solution of $(P_\e)$. Then, there exist constants $\eta$ and $\Gamma>0$, independent of $\e$, such that the following holds in $\Om$,
  {\small\begin{align*}
  	(\eta d+ \e') \log^\frac{1}{p-\ba}\Big(\frac{L}{\eta d+\e'}\Big) -\e' \log^\frac{1}{p-\ba}\Big(\frac{L}{\e'}\Big) \le u \le (\Ga d+\e') \log^\frac{1}{p-\ba}\Big(\frac{L}{\Ga d+\e'}\Big) -\e' \log^\frac{1}{p-\ba}\Big(\frac{L}{\e'}\Big),
  \end{align*}  }
 where $L>0$ is large enough and $\e=\e' \log^{1/(p-\ba)}\big(\frac{L}{\e'}\big)$.
\end{Lemma}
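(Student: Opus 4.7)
The proof mirrors Lemma \ref{lem7}, but since $\ba+\de=1$ forces the exponent $\tau=(p-\ba)/(p-1+\de)$ to equal $1$, a logarithmic correction must be inserted in the ansatz. I would set $\psi(s):=s \log^{1/(p-\ba)}(L/s)$ for $s\in(0,L/e)$, with $L>0$ to be chosen large, and define
\begin{align*}
V_\eta(x) := \psi(\eta d(x)+\e') - \psi(\e'), \qquad W_\Ga(x) := \psi(\Ga d(x)+\e') - \psi(\e').
\end{align*}
The algebraic identity which dictates the relation $\e=\e'\log^{1/(p-\ba)}(L/\e')$ is $V_\eta+\e=\psi(\eta d+\e')$ (and similarly for $W_\Ga$); this is what allows the shifted power $(V_\eta+\e)^{-\de}$ on the right-hand side of $(P_\e)$ to collapse to a clean expression.

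Next, $\na V_\eta=\eta\psi'(\eta d+\e')\na d$, so $|\na V_\eta|=\eta\psi'(\eta d+\e')$ since $|\na d|=1$ in $\Om_\varrho$. A direct chain-rule computation yields
\begin{align*}
-\De_p V_\eta = -(p-1)\eta^p(\psi'(\eta d+\e'))^{p-2}\psi''(\eta d+\e') - \eta^{p-1}(\psi'(\eta d+\e'))^{p-1}\De d,
\end{align*}
and an analogous formula for $-\De_q V_\eta$. Using the asymptotics $\psi'(s)\sim \log^{1/(p-\ba)}(L/s)$ and $\psi''(s)\sim -\frac{1}{(p-\ba)s}\log^{1/(p-\ba)-1}(L/s)$ together with $|\De d|\le M$ in $\Om_\varrho$, the dominant contribution is the $\psi''$ term in $-\De_p V_\eta$, which is of order $\eta^p(\eta d+\e')^{-1}\log^{(p-1)/(p-\ba)-1}(L/(\eta d+\e'))$. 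The crucial algebraic identity $(p-1)/(p-\ba)-1=-\de/(p-\ba)$ uses precisely $\ba+\de=1$. The $-\De_q$ contribution is of lower order in the logarithmic weight (because $|\na V_\eta|$ is logarithmically large and $q<p$), while the $\De d$ term is of lower order in both the power of $s=\eta d+\e'$ and the logarithm.

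On the right-hand side of $(P_\e)$, $(V_\eta+\e)^{-\de}=(\eta d+\e')^{-\de}\log^{-\de/(p-\ba)}(L/(\eta d+\e'))$, while \eqref{eqf} and the definition of $f_\e$ give $f_\e(x)\asymp(d+\e)^{-\ba}$ in $\Om_\varrho$. Matching the leading behaviour yields a constant of order $\eta^p$ on the left and $\eta^{-\de}$ on the right. Choosing $\eta$ small enough (depending only on $c_1,c_2,p,\ba$) makes the subsolution inequality $-\De_p V_\eta-\De_q V_\eta\le f_\e(V_\eta+\e)^{-\de}$ hold in $\Om_\varrho$, and taking $L$ sufficiently large at the end absorbs the residual logarithmic errors uniformly in $\e$. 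The analogue for $W_\Ga$ follows by taking $\Ga$ large; the comparison is extended from $\Om_\varrho$ to all of $\Om$ via the uniform lower bound \eqref{eq32} on $\Om\setminus\Om_\varrho$ (after possibly shrinking $\eta$ or enlarging $\Ga$), and the weak comparison principle then gives $V_\eta\le u_\e\le W_\Ga$.

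The main obstacle will be the careful bookkeeping of logarithmic factors: one must fix the order of the quantifiers so that $\eta$ (respectively $\Ga$) is determined from the leading-order balance first, and only afterwards $L$ is chosen large enough, depending on $\eta$, $\varrho$, $M$, and the constants in \eqref{eqf}, so that every subleading term (from $\De d$, from the $q$-Laplacian, and from the lower-order corrections of $\psi',\psi''$) is dominated uniformly in $\e'$ across $\Om_\varrho$.
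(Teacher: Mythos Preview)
Your approach is essentially the same as the paper's: the same logarithmic ansatz $V_\eta=\psi(\eta d+\e')-\psi(\e')$ with $\psi(s)=s\log^{1/(p-\ba)}(L/s)$, the same direct computation of $-\De_p$ and $-\De_q$ via $|\na d|=1$ and $|\De d|\le M$ in $\Om_\varrho$, the same exponent identity $(p-1)/(p-\ba)-1=(\ba-1)/(p-\ba)$ coming from $\ba+\de=1$, and the same extension to $\Om\setminus\Om_\varrho$ through \eqref{eq32} followed by the weak comparison principle.

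The one point where you diverge from the paper is the order in which the constants are fixed. You propose to determine $\eta$ (resp.\ $\Ga$) first from the leading balance and then choose $L$ large enough depending on $\eta,\varrho,M$ to absorb all residual errors. The paper does the opposite: it fixes $L$ at the outset (needing only $\log\big(L/(\mathrm{diam}(\Om)+1)\big)\ge 2/(p-\ba)$, which is independent of $\eta,\Ga$), then bounds the bracketed error terms by constants, and afterwards chooses $\eta$ small (resp.\ $\Ga$ large). When finer control is needed---e.g.\ for the supersolution, to ensure $(\Ga d+\e')\log\frac{L}{\Ga d+\e'}\le \frac{\Ga(q-1)}{2M(p-\ba)}$---the paper shrinks $\varrho$ rather than enlarging $L$. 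Your assertion that ``taking $L$ sufficiently large at the end absorbs the residual logarithmic errors uniformly in $\e$'' does not match how the argument is actually closed and deserves a careful check: increasing $L$ simultaneously increases the log factors you are trying to dominate, so it is $\varrho$ (and then $\eta$ or $\Ga$) that does the work, not $L$.
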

\begin{proof}
 Set $\ul u_\e = (\eta d+ \e') \log^{1/(p-\ba)}\big(\frac{L}{\eta d+\e'}\big) -\e' \log^{1/(p-\ba)}\big(\frac{L}{\e'}\big)$. Then, 
  \[ \na \ul u_\e = \eta \log^\frac{1-p+\ba}{p-\ba}\Big(\frac{L}{\eta d+\e'}\Big) \left[\log\Big(\frac{L}{\eta d+\e'}\Big) -\frac{1}{p-\ba} \right] \na d. \]
 For $\psi\in C^\infty_c(\Om_\varrho)$ with $\psi\ge 0$, using the fact that $|\na d|=1$, we get 
 \begin{align*}
  \int_{\Om} -\De_p \ul u_\e \psi = \eta^{p-1} \int_{\Om} \na d \na \psi \log^\frac{(1-p+\ba)(p-1)}{p-\ba}\Big(\frac{L}{\eta d+\e'}\Big) \left[\log\Big(\frac{L}{\eta d+\e'}\Big) -\frac{1}{p-\ba} \right]^{p-1}.
 \end{align*}
A simple manipulation yields
{\small \begin{align*}
  -\De_p \ul u_\e = \frac{\eta^{p-1}}{\eta d+\e'} \Big(\log\frac{L}{\eta d+\e'}\Big)^\frac{\ba-1}{p-\ba} 
  &\left[ (-\De d) (\eta d+\e') \Big(\log \frac{L}{\eta d+\e'}\Big)^{2-p} \Big(\log\frac{L}{\eta d+\e'} -\frac{1}{p-\ba} \Big)^{p-1} \right. \\
  & \left. \ + \frac{\eta(1-p+\ba)(p-1)}{p-\ba}\Big(\log \frac{L}{\eta d+\e'}\Big)^{1-p} \Big(\log\frac{L}{\eta d+\e'} -\frac{1}{p-\ba} \Big)^{p-1} \right.\\
  & \left. \ + \eta(p-1) \Big(\log \frac{L}{\eta d+\e'}\Big)^{2-p} \Big(\log\frac{L}{\eta d+\e'} -\frac{1}{p-\ba} \Big)^{p-2}\right].
 \end{align*} }
Since $|-\De d| \le M$ in $\Om_{\varrho}$ and $\eta<1$, we deduce that 
 \begin{align*}
 	-\De_p \ul u_\e \le \frac{\eta^{p-1}}{\eta d+\e'} \Big(\log\frac{L}{\eta d+\e'}\Big)^\frac{\ba-1}{p-\ba} &\left[M (\eta d+\e') \log\frac{L}{\eta d+\e'}+ \frac{(p-1-\ba)(p-1)}{p-\ba} \right. \\
 	& \left. \quad +(p-1)\Big(1-\frac{1}{(p-\ba)\log\big({L/(\eta d+\e')}\big)} \Big)^{p-2} \right].
 \end{align*}
 Choosing $L>>1$ sufficiently large such that $\log\big(L/(diam(\Om)+1)\big)\ge 2/(p-\ba)$ and if necessary by reducing $\varrho$ further, we get $(\eta d+\e') \log\frac{L}{\eta d+\e'} \le C_1$ in $\Om_{\varrho}$. Therefore, the quantity in the bracket is bounded by a positive constant $C$, independent of $\e$. Thus, 
  \begin{align*}
  	-\De_p \ul u_\e \le C \eta^{p-1}(\eta d+\e')^{-1} \Big(\log\frac{L}{\eta d+\e'}\Big)^\frac{\ba-1}{p-\ba}.
  \end{align*}
Proceeding similarly, we obtain
 {\small \begin{align*}
  -\De_q \ul u_\e \le \frac{\eta^{q-1}}{\eta d+\e'} \Big(\log\frac{L}{\eta d+\e'}\Big)^\frac{\ba-1}{p-\ba} &\left[M (\eta d+\e') \Big(\log\frac{L}{\eta d+\e'}\Big)^\frac{q-\ba}{p-\ba} + \frac{(p-1-\ba)(q-1)}{(p-\ba)\log^\frac{p-q}{p-\ba} \big({L/(\eta d+\e')}\big)}  \right.  \\
  & \left. \ +(q-1)\Big(\log\frac{L}{\eta d+\e'}\Big)^\frac{q-p}{p-\ba} \Big(1-\frac{1}{(p-\ba)\log\big({L/(\eta d+\e')}\big)} \Big)^{q-2}  \right].
 \end{align*} }
Using the same assertions as in the estimate of $-\De_p \ul u_\e$, we get
 \begin{align*}
  -\De_q \ul u_\e \le C \eta^{q-1}(\eta d+\e')^{-1} \Big(\log\frac{L}{\eta d+\e'}\Big)^\frac{\ba-1}{p-\ba}.
 \end{align*}
 Noting the fact that $(\tl u_\e +\e)^{-\de} = (\eta d+\e')^{-\de} \Big(\log\frac{L}{\eta d+\e'}\Big)^{(\ba-1)/(p-\ba)}$ and proceeding similar to lemma \ref{lem7}, for sufficiently small $\eta>0$, independent of $\e$, we get
 \begin{align*}
  \frac{1}{f_\e(x)}\big(-\De_p \ul u_\e -\De_q \ul u_\e \big)\le  C \eta^{q-1}(\eta d+\e')^{-\de} \Big(\log\frac{L}{\eta d+\e'}\Big)^{(\ba-1)/(p-\ba)} \le \ul u_\e^{-\de} \quad\mbox{ in }\Om_{\varrho}.
 \end{align*}
Moreover, using \eqref{eq32}, we obtain 
 \[\ul u_\e(x) \le u_\e(x) \quad\mbox{ in }\Om\setminus\Om_{\varrho}, \]
 for sufficiently small $\eta>0$ independent of $\e$. Therefore, by comparison principle we deduce that $\ul u_\e \le u_\e$ in $\Om$. This gives the lower bound for $u$. To obtain the upper bound, we set 
 \[\ov u_\e = (\Ga d+ \e') \log^{1/(p-\ba)}\Big(\frac{L}{\Ga d+\e'}\Big) -\e' \log^{1/(p-\ba)}\Big(\frac{L}{\e'}\Big).\] 
 Then, proceeding as in the previous case and after simplification, we get
 {\small\begin{align*}
  -\De_p \ov u_\e \ge \frac{\Ga^{p-1}}{\Ga d+\e'} \Big(\log\frac{L}{\Ga d+\e'}\Big)^\frac{\ba-1}{p-\ba} &\left[-M (\Ga d+\e') \log\frac{L}{\Ga d+\e'} \Big(1-\frac{1}{(p-\ba)\log\frac{L}{\Ga d+\e'}}  \Big)^{p-1} \right.\\
  &\left. \quad+ \frac{\Ga(p-1-\ba)(p-1)}{(p-\ba)^2} 
    \Big(1-\frac{1}{(p-\ba)\log\frac{L}{\Ga d+\e'}\big)}  \Big)^{p-2} \frac{1}{\log\frac{L}{\Ga d+\e'}} \right. \\
  & \left. \quad +\frac{\Ga(p-1)}{p-\ba}\Big(1-\frac{1}{(p-\ba)\log\frac{L}{\Ga d+\e'}\big)} \Big)^{p-2} \right].
 \end{align*} }
 And proceeding similarly, 
{\small
 \begin{align*}
  -\De_q \ov u_\e \ge \frac{\Ga^{q-1}}{\Ga d+\e'} \Big(\log\frac{L}{\Ga d+\e'}\Big)^\frac{\ba-1}{p-\ba} &\left[-M (\Ga d+\e') \Big(\log\frac{L}{\Ga d+\e'}\Big)^\frac{q-\ba}{p-\ba} \Big(1-\frac{1}{(p-\ba)\log\frac{L}{\Ga d+\e'}}  \Big)^{q-1} \right.\\
  &\left.  + \frac{\Ga(p-1-\ba)(q-1)}{(p-\ba)^2 \Big(\log\frac{L}{\Ga d+\e'}\Big)^\frac{p-q}{p-\ba}} 
  \Big(1-\frac{1}{(p-\ba)\log\frac{L}{\Ga d+\e'}\big)}  \Big)^{q-2}  \right. \\
  & \left.  +\frac{\Ga(q-1)}{p-\ba}\Big(1-\frac{1}{(p-\ba)\log\frac{L}{\Ga d+\e'}\big)} \Big)^{q-2}\Big(\log\frac{L}{\Ga d+\e'}\Big)^\frac{p-q}{p-\ba} \right].
 \end{align*} }
We reduce $\varrho$ further so that $(\Ga d+\e')\log\frac{L}{\Ga d+\e'} \le \frac{\Ga(q-1)}{2M(p-\ba)}$  and $\log\frac{L}{\Ga d+\e'}\ge 2/(p-\ba)$ in $\Om_{\varrho}$, thus the quantity in the bracket is bounded from below by some positive constant $c$. Therefore, 
 \begin{align*}
  -\De_p \ov u_\e -\De_q \ov u_\e \ge  \frac{c\Ga^{q-1}}{\Ga d+\e'} \Big(\log\frac{L}{\Ga d+\e'}\Big)^\frac{\ba-1}{p-\ba}.
 \end{align*}
 Combining the approach of previous case with the assertions and arguments used in the case of supersolution in lemma \ref{lem7}, we obtain the required upper bound. This completes proof of the lemma. \QED
\end{proof}

\begin{Lemma}\label{lem9}
Let $\ba+\de\ge1$ and $\ba\in[0,p)$, then the sequence $\{ u_\e^{(p+\de-1)/(p-\ba)}\}$ is uniformly bounded in $W^{1,p}_0(\Om)$. Moreover, $\{u_\e\}$ is uniformly bounded in $W^{1,p}_{loc}(\Om)$.
\end{Lemma}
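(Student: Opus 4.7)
The plan is to test the weak formulation of $(P_\e)$ against $\phi = u_\e^\alpha$, where $\alpha := \frac{p(\ba+\de)-\ba}{p-\ba}$ is chosen so that $\alpha - 1 = p(\gamma-1)$ with $\gamma := \frac{p+\de-1}{p-\ba} = 1/\tau$ in the notation of Lemma \ref{lem7}. Since $\ba+\de \ge 1$ one has $\gamma \ge 1$ and $\alpha \ge 1$, and since $u_\e \in W^{1,p}_0(\Om)\cap L^\infty(\Om)$ with a uniform $L^\infty$ bound obtained from the upper estimates of Lemmas \ref{lem7} and \ref{lem10}, $\phi$ is an admissible test function. The weak identity then reads
\begin{equation*}
\alpha \int_\Om \big(|\na u_\e|^p + |\na u_\e|^q\big) u_\e^{\alpha - 1}\, dx = \int_\Om f_\e(x)\, (u_\e + \e)^{-\de}\, u_\e^{\alpha}\, dx .
\end{equation*}
Using $(u_\e+\e)^{-\de} \le u_\e^{-\de}$ together with the algebraic identity $\alpha - \de = \ba/\tau$ (a direct computation), the right hand side is majorised by $\int_\Om f_\e\, u_\e^{\ba/\tau}\, dx$.

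The key step is to show that this last integral is uniformly bounded in $\e$. When $\ba+\de > 1$, the upper bound of Lemma \ref{lem7} gives $u_\e(x)^{\ba/\tau} \le \Ga^{\ba/\tau}(d(x)+\e^{1/\tau})^{\ba}$, while by \eqref{eqf} and the definition of $f_\e$, $f_\e(x) \le c_2\,(d(x) + c_2^{1/\ba}\,\e^{1/\tau})^{-\ba}$. Since $d(x)+\e^{1/\tau}$ and $d(x)+c_2^{1/\ba}\e^{1/\tau}$ differ only by a fixed multiplicative constant, these combine to give the pointwise bound $f_\e u_\e^{\ba/\tau} \le C$ in $\Om_\varrho$, uniformly in $\e$. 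In the borderline case $\ba+\de=1$ (so $\tau=\gamma=1$ and $\alpha=1$), Lemma \ref{lem10} produces an analogous estimate with a logarithmic correction, yielding a pointwise bound $f_\e u_\e^{\ba} \le C \log^{\ba/(p-\ba)}(L/d(x))$, which remains integrable over $\Om_\varrho$. Outside $\Om_\varrho$ both $f_\e$ and $u_\e$ are uniformly bounded, so the whole right hand side is controlled by a constant independent of $\e$.

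Dropping the non-negative $q$-term from the left hand side and applying the chain rule then gives
\begin{equation*}
\int_\Om |\na(u_\e^\gamma)|^p\, dx = \gamma^p \int_\Om u_\e^{\alpha-1} |\na u_\e|^p\, dx \le C.
\end{equation*}
Since $\gamma \ge 1$ and $u_\e \in W^{1,p}_0(\Om)\cap L^\infty(\Om)$, we have $u_\e^\gamma \in W^{1,p}_0(\Om)$, and Poincar\'e's inequality upgrades the above to the uniform estimate $\|u_\e^\gamma\|_{W^{1,p}_0(\Om)} \le C$. For the local claim, \eqref{eq32} supplies $u_\e \ge u_1 \ge c_K > 0$ on any $K \Subset \Om$, so $u_\e^{\alpha-1} \ge c_K^{\alpha-1}$ on $K$ and hence $\int_K |\na u_\e|^p\, dx \le C_K$. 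The most delicate point is the borderline case $\ba+\de=1$, where the logarithmic profile of Lemma \ref{lem10} has to be reconciled with the definition of $f_\e$ at $\tau=1$ (together with the nontrivial change of variable $\e = \e'\log^{1/(p-\ba)}(L/\e')$); once the constants are tracked carefully, the needed bound reduces to the local integrability of $\log^{\ba/(p-\ba)}(L/d(x))$ near $\pa\Om$, which is standard.
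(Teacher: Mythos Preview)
Your proof is correct and follows essentially the same strategy as the paper: test $(P_\e)$ against a power $u_\e^\alpha$ and bound the right-hand side via the boundary estimates from Lemmas~\ref{lem7} and~\ref{lem10}. For $\ba+\de>1$ the two arguments coincide exactly---your exponent $\alpha$ is precisely the paper's choice $\ga$ for which $(p+\ga-1)/p=(p+\de-1)/(p-\ba)$, and your observation $\alpha-\de=\ba/\tau$ is what makes the paper's exponent $-\ba+(\ga-\de)\tau$ vanish, so the right-hand side is bounded by a constant. The only genuine difference is in the borderline case $\ba+\de=1$: you keep $\alpha=1$ and show $f_\e u_\e^{\ba}\le C\log^{\ba/(p-\ba)}(L/d)$ is integrable, whereas the paper tests with $u_\e^{\de}$ so that $f_\e(u_\e+\e)^{-\de}u_\e^{\de}\le f_\e\le f\in L^1(\Om)$ (since $\ba<1$), obtaining a bound on $\int_\Om u_\e^{\de-1}|\na u_\e|^p$; the passage to $\int_\Om|\na u_\e|^p$ then uses the uniform $L^\infty$ bound on $u_\e$ via $u_\e^{\de-1}\ge\|u_\e\|_\infty^{\de-1}$. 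Your route treats both subcases uniformly and avoids this implicit upgrade, at the price of the logarithmic bookkeeping you flag; the paper's route has a cleaner right-hand side but leaves the last step unwritten.
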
   
\begin{proof}
 We first consider the case $\ba+\de>1$ and take $u_\e^\ga$ as a test function in the weak formulation of \eqref{eq31} for some $\ga>0$.
 Therefore, 
 \begin{align}\label{eq11}
 \int_{\Om} |\na u_\e|^{p-2} \na u_\e \na u_\e^\ga + \int_{\Om} |\na u_\e|^{q-2} \na u_\e\na u_\e^\ga =\int_{\Om} f_\e(x) \frac{u_\e^\ga}{\big( u_\e +\e\big)^{\de}} \le \int_{\Om} f(x) u_\e^{\ga-\de}.
 \end{align}
 We first observe that 
 \begin{align*}
 \int_{\Om} |\na u_\e|^{p-2} \na u_\e \na u_\e^\ga = \ga\Big(\frac{p}{p+\ga-1} \Big)^p \int_{\Om} |\na u_\e^{(p+\ga-1)/p}|^p
 \end{align*}
 and similar result holds for the second term on the left of \eqref{eq11}. Owing to \eqref{eqf} and  behaviour of $u_\e$ near the boundary proved in lemma \ref{lem7}, 
 from \eqref{eq11}, we infer that
 \begin{align*}
  \ga\Big(\frac{p}{p+\ga-1} \Big)^p \int_{\Om} |\na u_\e^{(p+\ga-1)/p}|^p \le C \int_{\Om} d(x)^{-\ba+\frac{(\ga-\de)(p-\ba)}{p+\de-1}} dx,
 \end{align*}
 the right side quantity is finite if and only if $\ga>\de+ \frac{(\ba-1)(p-1+\de)}{p-\ba}$. Thus, ${u_\e}^\rho\in W^{1,p}_0(\Om)$ is uniformly bounded for all $\rho> \frac{1}{p}\Big(p-1+\de+\frac{(\ba-1)(p-1+\de)}{p-\ba}\Big)= \frac{(p-1)(p-1+\de)}{p(p-\ba)}$. For the case $\ba+\de=1$, we take $u_\e^\de$ as a test function in the weak formulation of $(P_\e)$ and notice that the right hand side can be made independent of $u_\e$ and the function $d^{-\ba}$ is integrable, since $\ba<1$. Proceeding similarly, we obtain $\{ u_\e^{(p+\de-1)/(p-\ba)}\}$ is uniformly bounded in $W^{1,p}_0(\Om)$.\QED
\end{proof}  
Next, we will prove the existence of unique weak solution to $(P)$ when $\ba+\de<1$. To construct a suitable subsolution for this case, we recall the following proposition proved by Papageorgiou et al.\cite{papageorg}. The main ingredient of the proof is strong maximum principle of Pucci and Serrin \cite{pucci} and the strong comparison principle for general quasilinear elliptic equations.  
 For this purpose, we define the following set
 \[ \text{int }C_+:= \big\{ u\in C^1(\ov\Om): u>0 \mbox{ in }\Om, u=0 \mbox{ on }\pa\Om, \frac{\pa u}{\pa\nu} \Big|_{\pa\Om}<0 \big\}.\]
\begin{Lemma}\cite[Proposition 10]{papageorg} \label{lem6}
 For all $\rho>0$, there exists a unique solution $\tl u_\rho\in\text{int } C_+$ to the following problem
  \begin{align}\label{eqS}
  	-\De_p u -\De_q u =\rho \quad \mbox{in }\Om, \quad u=0 \ \ \mbox{on }\pa\Om.
  \end{align}
 Furthermore, the map $\rho\mapsto \tl u_\rho$ is increasing from $(0,1]$ to $C^1_0(\ov\Om)$ and $\tl u_\rho\ra 0$ in $C^1_0(\ov\Om)$ as $\rho\ra 0^+$.
\end{Lemma}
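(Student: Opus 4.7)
The plan is to obtain $\tl u_\rho$ as the unique minimizer of the strictly convex, coercive, and weakly lower semicontinuous energy functional $J_\rho:W^{1,p}_0(\Om)\to\R$ given by
\[ J_\rho(u) = \frac{1}{p}\int_\Om |\na u|^p\,dx + \frac{1}{q}\int_\Om |\na u|^q\,dx - \rho\int_\Om u\,dx. \]
Coercivity follows from Poincar\'e's inequality applied to the $p$-Dirichlet term (since $q<p$, the $q$-term is non-negative and poses no obstruction), while strict convexity of both Dirichlet energies guarantees uniqueness of the minimizer. The Euler--Lagrange equation is precisely \eqref{eqS}, so the minimizer is a weak solution, and uniqueness among all weak solutions can alternatively be checked by testing the difference of two candidates against itself and invoking \eqref{basicIn} on both the $p$- and $q$-Laplacian terms.

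To upgrade to regularity up to the boundary, I would apply Theorem \ref{thm5} with $g\equiv\rho\in L^\infty(\Om)$, $B\equiv 0$, and $\sigma=0$; boundedness of the forcing and the fact that the trivial bound $0\le \tl u_\rho \le C$ by comparison with a constant solution gives $\tl u_\rho\in C^{1,\al}(\ov\Om)$. Non-negativity follows from testing with $(\tl u_\rho)^-$, and then the strong maximum principle of Pucci--Serrin \cite{pucci} together with the boundary point lemma applied to $-\De_p-\De_q$ produces $\tl u_\rho>0$ in $\Om$ and $\pa_\nu \tl u_\rho|_{\pa\Om}<0$, placing $\tl u_\rho$ in $\text{int }C_+$.

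For monotonicity, given $0<\rho_1<\rho_2\le 1$ I would subtract the two weak formulations and test against $(\tl u_{\rho_1}-\tl u_{\rho_2})^+\in W^{1,p}_0(\Om)$. The right-hand side contributes $(\rho_1-\rho_2)\int_\Om (\tl u_{\rho_1}-\tl u_{\rho_2})^+\le 0$, while on the left the inequality \eqref{basicIn} applied separately to the $p$- and $q$-terms yields a non-negative contribution. This forces $(\tl u_{\rho_1}-\tl u_{\rho_2})^+\equiv 0$, i.e., $\tl u_{\rho_1}\le \tl u_{\rho_2}$. To strengthen this to strict monotonicity in $\text{int }C_+$ one invokes the strong comparison principle for quasilinear equations of this type.

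Finally, to obtain $\tl u_\rho\to 0$ in $C^1_0(\ov\Om)$ as $\rho\to 0^+$, I would first test \eqref{eqS} against $\tl u_\rho$ to get a uniform $W^{1,p}_0$-bound, then a uniform $L^\infty$-bound via Moser iteration (or via comparison with the solution of $-\De_p v=\rho$), and finally a uniform $C^{1,\al}(\ov\Om)$-bound through Theorem \ref{thm5} since the right-hand side $\rho\le 1$ enters the estimate only through an $L^\infty$ norm. Compact embedding $C^{1,\al}\hookrightarrow C^1$ then lets us extract along any sequence $\rho_n\downarrow 0$ a subsequence converging in $C^1_0(\ov\Om)$ to a limit which weakly solves $-\De_p u-\De_q u=0$ with $u=0$ on $\pa\Om$; the unique such solution is zero, and the usual subsequence argument upgrades this to full convergence. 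The main obstacle I anticipate is confirming that the constants in Theorem \ref{thm5} really do depend on $\rho$ only through $\|\rho\|_\infty$ (and not, say, through the $C^{0,\al}$ character of the data in a way that degenerates as $\rho\to 0$), so that the uniform $C^{1,\al}$ bound is genuinely available on the whole range $\rho\in(0,1]$.
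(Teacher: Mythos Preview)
The paper does not actually prove this lemma: it is quoted from \cite[Proposition~10]{papageorg}, and the only hint given is that ``the main ingredient of the proof is strong maximum principle of Pucci and Serrin \cite{pucci} and the strong comparison principle for general quasilinear elliptic equations.'' Your sketch is a correct and complete fleshing-out of exactly these ingredients, so in that sense it matches the intended approach.

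Two small points. First, your ``comparison with a constant solution'' to get an $L^\infty$ bound does not work: a constant $C>0$ gives $-\De_p C-\De_q C=0<\rho$, so it is a \emph{sub}solution, not a supersolution. You need a genuine $L^\infty$ estimate (Moser iteration, or comparison with the solution of $-\De_p v=\rho$). Second, Theorem~\ref{thm5} as stated requires the a priori bound $0\le u\le Cd(x)$, not merely $u\in L^\infty$; since here $g\equiv\rho$ is bounded and there is no singularity, you can sidestep this by invoking Lieberman's result \cite{liebm91} directly (this is in fact what the paper itself does implicitly in the proof of Lemma~2.1 when applying Theorem~\ref{thm5} with $\sg=0$). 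With these cosmetic fixes your argument goes through, and your concern about the uniformity of the $C^{1,\al}$ constant in $\rho\in(0,1]$ is well placed but unproblematic: the estimate in Theorem~\ref{thm5} (or \cite{liebm91}) depends on $g$ only through $\|g\|_{L^\infty}\le 1$ and on $u$ only through $\|u\|_{L^\infty}$, which is itself controlled uniformly.
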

\begin{Lemma}\label{lem8}
 Let $\ba+\de<1$, then there exists a unique weak solution $u\in W^{1,p}_0(\Om)$ of $(P)$.
\end{Lemma}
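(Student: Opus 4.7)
The argument splits into existence and uniqueness. For existence, I would pass to the limit in the approximating sequence $\{u_\varepsilon\}$ already constructed in the first lemma of this section, which gives $u_\varepsilon\in W^{1,p}_0(\Omega)\cap C^{1,\alpha}(\overline\Omega)$ monotonically increasing as $\varepsilon\downarrow 0$ together with the interior bound \eqref{eq32}. For uniqueness I would simply invoke Theorem \ref{thm4}, whose hypothesis $\beta<2-\frac{1}{p}$ is automatic here since $\beta+\delta<1$ and $\delta>0$ force $\beta<1<2-\frac{1}{p}$ (as $p>1$).

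The core estimate is a uniform $W^{1,p}_0(\Omega)$ bound on $u_\varepsilon$. Testing $(P_\varepsilon)$ against $u_\varepsilon$ yields
\[
\int_\Omega\bigl(|\nabla u_\varepsilon|^p+|\nabla u_\varepsilon|^q\bigr)\,dx \;\le\; \int_\Omega f\,u_\varepsilon^{1-\delta}\,dx \;=\; \int_\Omega \bigl(f\,d^{1-\delta}\bigr)\bigl(u_\varepsilon/d\bigr)^{1-\delta}\,dx,
\]
to which I would apply H\"older's inequality with conjugate exponents $a=\frac{p}{p-1+\delta}$ and $b=\frac{p}{1-\delta}$ (both larger than $1$ since $\delta<1$). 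By \eqref{eqf} and $\beta+\delta<1$, the weight $f\,d^{1-\delta}$ is controlled by $C\,d^{1-\delta-\beta}$ with positive exponent, hence uniformly bounded on $\Omega$ and in particular bounded in $L^a(\Omega)$; the other factor is controlled by Hardy's inequality $\|u_\varepsilon/d\|_{L^p(\Omega)}\le C\|\nabla u_\varepsilon\|_{L^p(\Omega)}$, which applies because $u_\varepsilon\in W^{1,p}_0(\Omega)$. The combined estimate is $\|\nabla u_\varepsilon\|_{L^p}^p \le C\,\|\nabla u_\varepsilon\|_{L^p}^{1-\delta}$, and since $p-1+\delta>0$ this yields a bound $\|u_\varepsilon\|_{W^{1,p}_0(\Omega)}\le C$ independent of $\varepsilon$.

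With this bound and monotonicity, there exists $u\in W^{1,p}_0(\Omega)$ with $u_\varepsilon\uparrow u$ pointwise and $u_\varepsilon\rightharpoonup u$ weakly. To identify $u$ as a weak solution in the sense of Definition \ref{dfn1}, I would first upgrade to strong convergence in $W^{1,p}_0(\Omega)$ by the standard monotonicity trick: test $(P_\varepsilon)$ with $u_\varepsilon-u\le 0$ so that the right-hand side is nonpositive, subtract the weakly vanishing term against $|\nabla u|^{p-2}\nabla u+|\nabla u|^{q-2}\nabla u$, and invoke \eqref{basicIn} applied to the $p$- and $q$-parts separately to deduce $\nabla u_\varepsilon\to\nabla u$ in $L^p(\Omega)$ (treating the cases $p\ge 2$ and $1<p<2$ in the standard way). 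For each $\phi\in C_c^\infty(\Omega)$ with $K=\mathrm{supp}\,\phi\Subset\Omega$, the interior bound \eqref{eq32} gives $u_\varepsilon\ge C_K>0$ on $K$, hence $f_\varepsilon(u_\varepsilon+\varepsilon)^{-\delta}\phi$ is uniformly bounded on $K$ and dominated convergence passes the limit through the right-hand side; condition (iii) of Definition \ref{dfn1} holds with $\gamma=1$. The main technical point is the H\"older--Hardy step of the previous paragraph: the threshold $\beta+\delta<1$ is exactly what keeps $f\,d^{1-\delta}\sim d^{1-\delta-\beta}$ bounded and lets Hardy's inequality close the $\|\nabla u_\varepsilon\|_{L^p}$ estimate in one pass, whereas for $\beta+\delta\ge 1$ this approach fails and one must instead test against the powers $u_\varepsilon^\gamma$ of Lemma \ref{lem9}, recovering only the weaker $W^{1,p}_{\mathrm{loc}}$ bound.
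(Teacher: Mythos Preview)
Your argument is correct, but it follows a genuinely different route from the paper's. The paper proceeds by \emph{direct minimization}: it introduces the energy functional
\[
I(u)=\frac{1}{p}\int_\Om|\na u|^p+\frac{1}{q}\int_\Om|\na u|^q-\frac{1}{1-\de}\int_\Om f|u|^{1-\de},
\]
shows (via Young and Hardy, much as you do) that $I$ is coercive and weakly lower semicontinuous, and obtains a unique global minimizer by strict convexity on the positive cone. The delicate point in the paper is then to show the minimizer is actually a weak solution: since $I$ need not be G\^ateaux differentiable at functions touching zero, the paper compares with the auxiliary solutions $\tl u_\rho$ of Lemma~\ref{lem6} to force $u\ge \tl u_\rho\ge c\,d(x)$, after which differentiability holds. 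Uniqueness in the paper comes from strict convexity, not from Theorem~\ref{thm4}.

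Your approximation approach avoids Lemma~\ref{lem6} entirely and is closer in spirit to the treatment of the case $\ba+\de\ge 1$; the H\"older--Hardy estimate you isolate is exactly the mechanism that makes $\ba+\de<1$ special. Two remarks: (i) invoking Theorem~\ref{thm4} for uniqueness is logically sound (its proof does not use the present lemma), but it is a forward reference; the paper's convexity argument is self-contained at this stage. (ii) The paper's route delivers the lower bound $u\ge c\,d(x)$ as a by-product, which is used immediately afterwards in the proof of Theorem~\ref{thm1}; your construction gives this too via $u\ge u_1\in C^{1,\al}(\ov\Om)$ together with the Hopf-type boundary behaviour of $u_1$, but you would need to say so explicitly.
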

\begin{proof}
 We define the energy functional $I: W^{1,p}_0(\Om)\to \mb R$ associated to  $(P)$ as follows
 \begin{align*}
 	I(u):= \frac{1}{p} \int_{\Om}|\na u|^p dx+\frac{1}{q} \int_{\Om} |\na u|^q dx- \frac{1}{1-\de}\int_{\Om} f(x) |u|^{1-\de} dx.
 \end{align*}
An easy consequence of Young inequality and Hardy inequality, for any $\varepsilon>0$, implies that
 \begin{align*}
  \frac{1}{1-\de}\int_{\Om} f(x) |u|^{1-\de} dx &\le \varepsilon \int_{\Om} \Big(\frac{|u|}{d}\Big)^p + C(\varepsilon) \int_{\Om} |u|^{\frac{p(1-\de-\ba)}{p-\ba}}\\ & \le c \varepsilon \int_{\Om} |\na u|^p + C(\varepsilon) \int_{\Om} |u|^{\frac{p(1-\de-\ba)}{p-\ba}}. 
 \end{align*}
Using the fact $p(1-\de-\ba)/(p-\ba) <p$, we infer that $I$ is coercive and weakly lower semicontinuous in $W^{1,p}_0(\Om)$. Moreover, $I$ is strictly convex on $W^{1,p}_0(\Om)_+$, the positive cone of $W^{1,p}_0(\Om)$. Therefore, there exists a unique global minimizer $u\in W^{1,p}_0(\Om)$ of $I$ and without loss of generality we may assume $u\ge 0$ a.e. in $\Om$. Now, we will prove that $u$ is in fact a solution of $(P)$. For fixed $\rho>0$, let $\tl u_\rho$ be the unique solution of \eqref{eqS} obtained in lemma \ref{lem6}. We observe that $I$ is differentiable at $\tl u_\rho$, because $\tl u_\rho\in\text{int }C_+$, and hence
 \begin{align*}
 	I^\prime(\tl u_\rho)= -\De_p \tl u_\rho -\De_q \tl u_\rho -f(x) \tl u_\rho^{-\de} = \rho -f(x) \tl u_\rho^{-\de} <0,
 \end{align*}
 for $\rho>0$ sufficiently small, since $\tl u_\rho\ra 0$ in $C^1_0(\ov\Om)$ as $\rho\ra 0^+$. Set $w= (\tl u_\rho -u)^+$ and $\xi(t)= I(u+tw)$ for $t>0$. Due to the fact $u+tw\ge t\tl u_\rho$ for $t\in(0,1]$ and Hardy's inequality, we obtain that $\xi$ is differentiable in $(0,1]$. Since $\xi$ is strictly convex, we have $t\mapsto \xi^\prime(t)$ is nonnegative and nondecreasing. Therefore, 
 \begin{align*}
  0\le \xi^\prime(1)-\xi^\prime(t) \le \xi^\prime(1) =I^\prime(\tl u_\rho)<0,
 \end{align*}
 a contradiction if support of $v$ has non zero measure. Thus, $\tl u_\rho \le u$ in $\Om$ and  since $\tl u_\rho\in\text{int }C_+$, we get $c_1 d(x)\le u$. This implies that $I$ is G\^ateaux differentiable at $u$, therefore $u$ is a weak solution of $(P)$.  \QED
\end{proof}
We now study behaviour of the solution near the boundary, for this we first prove the following proposition.
\begin{Proposition}\label{prop2}
  Let $u\in W^{1,p}_0(\Om)\cap L^\infty_{loc}(\Om)$ be a weak solution of the problem $(P)$  with $\ba+\de<1$. Then, there exists a constant $C>0$ such that 
	\begin{equation*}
	 0\le u(x)\le C \; d(x) \quad \mbox{in }\Om. 
	\end{equation*}
\end{Proposition}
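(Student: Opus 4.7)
The plan is to compare the approximating solutions $u_\e$ of $(P_\e)$ (from Lemma 2.1) with an explicit barrier of the form
\[ \bar w(x):=\Ga\, d(x)-A\,d(x)^\theta,\qquad \theta:=2-\ba-\de\in(1,2), \]
defined in a tubular neighborhood $\Om_\varrho$, and then pass to the limit. Since Lemma \ref{lem8} provides uniqueness of the weak solution for $\ba+\de<1$, the monotone sequence $\{u_\e\}$ of Lemma 2.1 converges to $u$ pointwise a.e., so it suffices to establish $u_\e\le \Ga d$ in $\Om_\varrho$ uniformly in $\e$.

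Exploiting $|\na d|=1$ and $|\De d|\le M$ in $\Om_\varrho$, and shrinking $\varrho$ so that $A\theta d^{\theta-1}\le \Ga/2$, a direct computation analogous to Lemma \ref{lem7} yields
\[ -\De_p \bar w-\De_q\bar w\ \ge\ A\theta(\theta-1)\bigl[(p-1)(\Ga/2)^{p-2}+(q-1)(\Ga/2)^{q-2}\bigr]d^{-\ba-\de}-\bigl(\Ga^{p-1}+\Ga^{q-1}\bigr)M. \]
Since $\bar w\ge \Ga d/2$ and $f_\e\le c_2 d^{-\ba}$, the right-hand side of $(P_\e)$ evaluated at $\bar w$ satisfies $f_\e(\bar w+\e)^{-\de}\le c_2(\Ga/2)^{-\de}d^{-\ba-\de}$. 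Choosing $A$ large (in terms of $\Ga$) so that the coefficient of $d^{-\ba-\de}$ in the lower bound of $-\De_p\bar w-\De_q\bar w$ exceeds $c_2(\Ga/2)^{-\de}$, and then shrinking $\varrho$ further so that the surviving blow-up absorbs the bounded remainder $(\Ga^{p-1}+\Ga^{q-1})M$, certifies $\bar w$ as a supersolution of $(P_\e)$ in $\Om_\varrho$ independently of $\e$.

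To close the argument, uniform $L^\infty$ bounds on $u_\e$, obtained by Moser iteration on $(P_\e)$ starting from the uniform $W^{1,p}_0$ estimate derived through the Hardy/Young argument in the proof of Lemma \ref{lem8}, furnish a constant $K$ with $u_\e\le K$ on $\pa\Om_\varrho\cap\Om$; enlarging $\Ga$ ensures $\bar w\ge K$ there, while $\bar w\ge 0=u_\e$ on $\pa\Om$. The weak comparison principle for $(P_\e)$ (applicable because $v\mapsto(v+\e)^{-\de}$ is monotone decreasing, so that the $(u_\e-\bar w)^+$ test, together with \eqref{basicIn}, closes as in Lemma 2.1) then delivers $u_\e\le \bar w\le \Ga d$ in $\Om_\varrho$. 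Passing to the limit $\e\downarrow 0$ and using $u\in L^\infty_{loc}(\Om)$ with $d\ge \varrho$ outside $\Om_\varrho$ extends the bound to all of $\Om$. The main obstacle is the precise calibration of the barrier: the exponent $\theta=2-\ba-\de$ must be tuned so that the second-order perturbation $-Ad^\theta$ generates exactly a $d^{-\ba-\de}$ singularity capable of dominating the singular right-hand side, while keeping $\bar w$ linear in $d$ to leading order; the balance of $\Ga,A,\varrho$ against the bounded remainder and the values on $\pa\Om_\varrho\cap\Om$ is then a matter of routine adjustment.
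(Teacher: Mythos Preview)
Your barrier $\bar w=\Ga d-Ad^\theta$ with $\theta=2-\ba-\de$ is a genuinely different and more elementary construction than the paper's. The paper invokes a one-dimensional ODE solution $\Theta_\al$ from \cite{giacomoni} satisfying $-(|\Theta_\al'|^{p-2}\Theta_\al')'=\Theta_\al^{-\ba-\de}$ and takes $w=\Ga\Theta_\al(d)$; the ODE is tuned so that the $p$-Laplacian of $w$ automatically produces the correct singularity, and crucially the shrinking of $\varrho$ depends only on $\al$ and $M$, not on $\Ga$, so $\Ga$ can be enlarged freely at the end. Your explicit barrier achieves the same leading-order behaviour without ODE theory, which is a gain in simplicity, but the calibration of $\Ga,A,\varrho$ is no longer decoupled. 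As written, your order of choices is circular: you fix $\Ga$, pick $A$ ``in terms of $\Ga$'', shrink $\varrho$, and then ``enlarge $\Ga$'' to match $K$. This can be repaired by parametrising $A=c_0\Ga$ from the outset; then $\psi=\Ga(1-c_0\theta d^{\theta-1})$, the constraint $c_0\theta\varrho^{\theta-1}\le 1/2$ is independent of $\Ga$, the dominant term in the supersolution inequality scales like $c_0\Ga^{p-1}d^{-\ba-\de}$ against a remainder $\sim\Ga^{p-1}M$, so the absorption condition becomes $c_0 C\varrho^{-\ba-\de}\ge 2M$, again independent of $\Ga$. After fixing $c_0$ and $\varrho$, $\Ga$ is free to be enlarged. (Also note: for $p<2$ or $q<2$ the lower bound on $\psi^{p-2}$ should be $\Ga^{p-2}$, not $(\Ga/2)^{p-2}$; this is cosmetic.)

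The detour through $u_\e$ is unnecessary and introduces a real gap. The statement concerns an \emph{arbitrary} weak solution $u\in W^{1,p}_0(\Om)\cap L^\infty_{loc}(\Om)$, and you silently assume $u_\e\to u$ pointwise; that requires both proving the limit of $u_\e$ is a weak solution of $(P)$ and invoking uniqueness. The paper avoids this entirely by comparing $u$ directly with the barrier in $\Om_\varrho$: your $\bar w$ is in fact a supersolution of $(P)$ itself (since $f_\e(\bar w+\e)^{-\de}\le f\bar w^{-\de}$), and on the support of $(u-\bar w)^+$ one has $u\ge\bar w\ge\Ga d/2$, so $f u^{-\de}(u-\bar w)^+\le C d^{-\ba-\de}(u-\bar w)^+$ is integrable by Hardy's inequality. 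The test-function argument with \eqref{basicIn} then closes directly, and the hypothesis $u\in L^\infty_{loc}(\Om)$ supplies $u\le K$ on $\pa\Om_\varrho\cap\Om$ without any Moser iteration on the approximations.
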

\begin{proof}
 To prove the proposition, we will construct a suitable super solution to $(P)$.
 For this purpose, we recall the following observations from \cite[Lemma A.7]{giacomoni}: there exists a $C^1$ function $\Theta_\al: [0, R_\al)\to [0,\infty)$ satisfying 
	\begin{align}\label{eq4}
	 -&\frac{d}{dr} \Big( |\Theta_\al ^\prime(r)|^{p-2} \Theta_\al^\prime(r) \Big) = \Theta_\al (r)^{-\de-\ba}, \quad 0<r<R_\al \nonumber \\
	 & \Theta_\al(0)=0, \; \ \Theta_\al^\prime (0)=\al>0,
	\end{align}
 where $R_\al>0$ is the supremum of all $s\in(0,\infty)$ such that $\Theta_\al^\prime(s)>0$. We also observe that $\Theta_\al$ is strictly increasing and $\Theta_\al^\prime$ is strictly decreasing in $[0,R_\al)$. By making the substitution 
	\begin{align*}
	 \Theta_\al(r) =\al^{\frac{p}{\ba+\de-1}} \Theta_1( \al^\frac{-p}{p-1+\de+\ba} r), \; 0\le r\le R_\al, \quad R_\al = \al^\frac{-p}{p-1+\de+\ba} R_1,
	\end{align*}  
 we can choose $R_\al>0$ such that $R_\al> \text{diam}(\Om)$. Here $\Theta_1$ and $R_1$ are given by \cite[(A.19)]{giacomoni} and \cite[(A. 20)]{giacomoni}, respectively. An easy computation yields 
	\begin{align}\label{eq3} 
	 -\frac{d}{dr} \Big( |\Theta_1 ^\prime(r)|^{q-2} \Theta_1^\prime(r) \Big) \ge 0, \; \; 0<r<R_1 
	\end{align}
 and the same is true when $\Theta_1$ is replaced by $\Theta_\al$.  
 Define $w= \Ga \Theta_\al (d)$ in $\Om$, where $\Ga>1$ (to be chosen later). Then, 
	\[  \na w= \Ga \Theta_\al^\prime (d)\na d. \]
 Therefore, by observing the fact that $|\na d|=1$, for $\psi\in C_c^\infty(\Om_\varrho)$ with $\psi\ge 0$, we deduce that
	\begin{align}\label{eq5}
	 \int_{\Om_\varrho}-\De_p w \psi = \Ga^{p-1} \int_{\Om} \Theta_\al^\prime(d)^{p-1} \na d \na\psi 
	 &= \Ga^{p-1}\int_{\Om_\varrho} \Big( -\big(\Theta_\al^\prime(d)^{p-1}\big)^\prime + \Theta_\al^\prime(d)^{p-1} (-\De d) \Big) \psi \nonumber\\
	 &\ge \Ga^{p-1}\int_{\Om_\varrho} \Big( -\big(\Theta_\al^\prime(d)^{p-1}\big)^\prime - M\Theta_\al^\prime(d)^{p-1} \Big)\psi.
	\end{align}
 Similar calculation yields
	\begin{align}\label{eq5q}
	\int_{\Om_\varrho}-\De_q w \psi = \Ga^{q-1} \int_{\Om} \Theta_\al^\prime(d)^{q-1} \na d \na\psi \ge  \Ga^{q-1}\int_{\Om_\varrho} \Big( -\big(\Theta_\al^\prime(d)^{q-1}\big)^\prime - M\Theta_\al^\prime(d)^{q-1} \Big)\psi.
	\end{align}
 Therefore, coupling \eqref{eq5} and \eqref{eq5q} and using \eqref{eq4} together with \eqref{eq3}, we get
	\begin{align*}
	 -\De_p w -\De_q w &\ge \Ga^{p-1} \Big[ -\big(\Theta_\al^\prime(d)^{p-1}\big)^\prime - M\Theta_\al^\prime(d)^{p-1} \Big] +\Ga^{q-1} \Big[-\big(\Theta_\al^\prime(d)^{q-1}\big)^\prime - M\Theta_\al^\prime(d)^{q-1} \Big] \\
	 &\ge  \Ga^{p-1} \Big[ \Theta_\al(d)^{-\ba-\de} - M\Theta_\al^\prime(d)^{p-1} \Big] - \Ga^{q-1}M\Theta_\al^\prime(d)^{q-1},
	\end{align*}
 weakly in $\Om_{\varrho}$. Since $\Theta_\al$ is strictly increasing and $\Theta_\al^\prime$ is strictly decreasing together with $\Theta_\al(0)=0$ and $\Theta_\al^\prime(0)=\al$, we obtain $\Theta_\al(d)\le \al d$ and $\al^{p-1} \ge \Theta_\al^\prime(d)^{p-1}$. Therefore, if necessary, we can further reduce $\varrho>0$ such that the following holds
	\[   \Theta_\al(d)^{-\ba-\de} - M\Theta_\al^\prime(d)^{p-1} -  M\Theta_\al^\prime(d)^{q-1} \ge c \Theta_\al(d)^{-\ba-\de} \quad \mbox{ in }\Om_{\varrho}, \] 
 for some positive constant $c$. Thus,
	\begin{align*}
	-\De_p w -\De_q w \ge c \Ga^{p-1} \Theta_\al(d)^{-\ba-\de} \ge c \al^{-\ba} d^{-\ba} \Ga^{p-1} \Theta_\al(d)^{-\de}, 
	\end{align*} 
 where we used the relation $\Theta_\al(d)\le \al d$. Choosing $\Ga>0$ large enough so that $c \al^{-\ba}\Ga^{p-1}\ge c_2 \Ga^{-\de}$, we obtain 
	\begin{align*}
	-\De_p w -\De_q w \ge f(x) w^{-\de} \quad \mbox{in }\Om_{\varrho}.
	\end{align*} 
 By the fact that $u\in L^\infty_{loc}(\Om)$, we have 
  \[ \Ga \Theta_\al(d)\ge \Ga \Theta_\al(\varrho) \ge \|u\|_{L^\infty(\Om\setminus\Om_{\varrho})} \ge u(x)  \quad \mbox{in }\Om\setminus\Om_{\varrho},\] 
 for sufficiently large $\Ga$. Therefore, by comparison principle, we get 
	\begin{align*}
	u \le w= \Ga \Theta_\al(d)\le \Ga \al \;d \quad \mbox{in }\Om.
	\end{align*}
 This completes proof of the proposition. \QED
\end{proof}
\begin{Remark}
 We remark that the proof of Proposition \ref{prop2} can be used to obtain similar bounds on the bounded weak solution to the following problem, for $\de<1$,
 \begin{equation*}
  \left\{
  -\Delta_{p}u -\Delta_{q} u  = \la u^{-\delta}+u^{r-1},\; u>0 \text{ in }\; \Om; \quad
 u=0 \text{ on }  \pa\Om,
\right.
 \end{equation*}
 where $\la>0$ and $r\le p^*-1=\frac{Np}{N-p}-1$. Indeed,  since $u\in L^\infty(\Om)$, we have $u(x)\le \|u\|_\infty$ and hence 
 \begin{align*}
  -\De_p u -\De_q u =\la u^{-\de}+  u^{r-1} \le \la \big( 1+ \la^{-1} \|u\|_\infty^{r-1+\de}  \big) u^{-\de}:= \hat{\la} u^{-\de},
 \end{align*}
 where $\hat{\la}= \la\big( 1+ \la^{-1} \|u\|_\infty^{r-1+\de}  \big)$. Then, rest of the proof follows similarly, by observing the fact that if we take $u$ as a subsolution of $(P)$ instead of weak solution the proof of Proposition \ref{prop2} does not change. 
\end{Remark}
\textbf{Proof of Theorem \ref{thm1}}: For the case $\ba+\de<1$, by means of lemma \ref{lem8}, we get the existence of a weak solution $u\in W^{1,p}_0(\Om)$ satisfying $c_1 d(x)\le u(x)$ a.e. in $\Om$.  Next, since $\ba+\de<1$, following the procedure of \cite[Lemma 3.2]{dpk}, we obtain $u\in L^\infty(\Om)$. Then, applying Proposition \ref{prop2}, we obtain $u(x)\le C d(x)$ in $\Om$. \\
For the case $\ba+\de \ge 1$, due to lemma \ref{lem9}, the sequence $\{ u_\e^{(p+\de-1)/(p-\ba)}\}$ is uniformly bounded in $W^{1,p}_0(\Om)$. Therefore, we can extract a subsequence, still denoting by $u_\e$, such that $u_\e(x)\ra u(x)$ a.e. in $\Om$, for some $u\in W^{1,p}_{loc}(\Om)$. By the local H\"older regularity result of Lieberman \cite[Theorem 1.7]{liebm91}, we obtain the sequence $u_\e$ converges to $u$ in $C^1_{loc}(\Om)$. Therefore, $u$ satisfies equation $(P)$ in the sense of distribution. Moreover, from lemmas \ref{lem7} and \ref{lem10}, we deduce that 
\begin{align*}
&\eta d \log^\frac{1}{p-\ba}\Big(\frac{L}{d}\Big) \le u \le \Ga d \log^\frac{1}{p-\ba}\Big(\frac{L}{ d}\Big) \quad\mbox{ if }\ba+\de=1, \\
& \eta d(x)^{\frac{p-\ba}{p-1+\de}}  \le u(x) \le \Ga  d(x)^{\frac{p-\ba}{p-1+\de}} \quad\mbox{ if }\ba+\de>1.
\end{align*}
Repeating the proof of lemma \ref{lem9} and using above comparison estimates, we see that $u^\frac{p+\de-1}{p-\ba}\in W^{1,p}_0(\Om)$. Thus, $u$ is a weak solution to problem $(P)$ in the sense of definition \ref{dfn1}. \\
On account of the fact that the minimal weak solution (thus obtained) exhibits aforementioned behaviour near the boundary, taking $\lim_{x\ra x_0\in\pa\Om}u(x)$, we get $u\in C_0(\ov\Om)$, thus $u\in\mc C_{d_{\ba,\de}}$. For the last part of the theorem, suppose $u^\rho\in W^{1,p}_0(\Om)$, for some $\rho\ge 1$. Then from the weak formulation, it is clear that $\int_{\Om} f(x)u^{\rho-\de}<\infty$. Using behaviour of $u$ near the boundary, we see that this is equivalent to $-\ba+(\rho-\de)\frac{p-\ba}{p-1+\de}>-1$, this gives us $\rho> \frac{(p-1)(\ba+\de-1)}{p-\ba}:=\rho_0$. Furthermore, we note that $u\in W^{1,p}_0(\Om)$ if $\rho_0<1$, which yields $\de < 2+ \frac{1-\ba p}{p-1}$. This completes proof of the theorem.\QED

 \section{Comparison principle and non-existence result} 
 In this section, we first establish a comparison principle for weak sub and super solution of $(P)$ and as a consequence of this, we obtain the uniqueness result. We remark that the proof of weak comparison principle when $u,v\in W^{1,p}_0(\Om)$, is much simpler and it follows by taking $(u-v)^+$ as a test function in the weak formulation of $(P)$. \\ 
\textbf{Proof of Theorem \ref{thm4}}: For fixed $m>0$, we define $g_m: \mb R\to \mb R^+$ as follows
  \begin{align*}
 	g_m(s):= \begin{cases}
 	\min\{ s^{-\de}, m \} \quad \mbox{if }s>0 \\
 	m \qquad\mbox{ otherwise}.
 	\end{cases}
 	\end{align*}
 Let $\Upsilon_m$ be the primitive of $g_m$ such that $\Upsilon_m(1)=0$. We define a functional $\mc I_m: W^{1,p}_0(\Om)\to \mb R\cup\{-\infty,\infty\}$ as 
 	\begin{align*}
 	\mc I_m(\phi):= \frac{1}{p} \int_{\Om}|\na \phi|^p dx+\frac{1}{q} \int_{\Om} |\na \phi|^q dx- \int_{\Om} f(x) \Upsilon_m(\phi) dx,
 	\end{align*}
 for all $\phi\in W^{1,p}_0(\Om)$. Set 
 	\[ \mc M:= \{ \phi\in W^{1,p}_0(\Om): 0\le \phi\le v \mbox{ a.e. in }\Om\},\]
 which is a closed and convex set. 
 First we observe that for any bounded sequence $\{u_n\}\in\mc M$ and $\theta\in(0,1)$, to be chosen later,  
 	\begin{align*}
 	\int_{\Om} d^{-\ba} u_n dx  \le \left( \int_{\Om} \Big(\frac{u_n}{d}\Big)^{p} \right)^\frac{1-\theta}{p} \left(\int_{\Om} u_n^r\right)^\frac{\theta}{r} \left( \int_{\Om} d^{(1-\ba-\theta)l}\right)^\frac{1}{l}\le C \left(\int_{\Om} u_n^r\right)^\frac{\theta}{r} \left( \int_{\Om} d^{(1-\ba-\theta)l}\right)^\frac{1}{l},
 	\end{align*}
 where $r<p^*$, if $p<N$, $\frac{1-\theta}{p}+\frac{\theta}{r}+\frac{1}{l}=1$ and in the last inequality, we used Hardy inequality and boundedness of $\{u_n\}$ in $W^{1,p}_0(\Om)$.
 This requires $(1-\ba-\theta)l>-1$, which is equivalent to $\theta<\frac{2pr-pr\ba-r}{pr-r+p}$. 
 Due to the fact that $\ba< 2-1/p$ and by above observation, it is easy to deduce that $\mc I_m$ is weakly lower semicontinuous on $\mc M$. Therefore, there exists a minimizer $w$ of $\mc I_m$ in $\mc M$ and the following holds
 	\begin{align}\label{eq44}
 	 \int_{\Om} \big(|\na w|^{p-2} + |\na w|^{q-2} \big) \na w \na(\phi-w) dx\ge \int_{\Om} f(x) \Upsilon_m^\prime(w) (\phi-w) dx 
 	\end{align}
 for $\phi\in w+ \big(W^{1,p}_0(\Om)\cap L^\infty_c(\Om) \big)$ with $0\le \phi\le v$ a.e. in $\Om$. \\ 
 \textbf{Step I}: We claim that, for all $\phi\in C_c^\infty(\Om)$ with $\phi\ge 0$, there holds
 	\begin{align}\label{eq45}
 	 \int_{\Om} \big(|\na w|^{p-2} + |\na w|^{q-2} \big) \na w \na\phi dx\ge \int_{\Om} f(x) \Upsilon_m^\prime(w)\phi~ dx.
 	\end{align} 
 Let $h\in C_c^\infty(\mb R)$ such that $0\le h \le 1$, $h\equiv 1$ in $[-1,1]$ and $\text{supp}(h)\subset (-2,2)$. Now, for $\phi\in C_c^\infty(\Om)$ satisfying $\phi\ge 0$ in $\Om$, we define $\phi_k:= h(\frac{w}{k})\phi$ and $\phi_{k,t}:= \min\{w+t\phi_k, v\}$, for $k\ge 1$ and $t>0$. It is easy to observe that $\phi_{k,t}\in w+\big(W^{1,p}_0(\Om)\cap L^\infty_c(\Om) \big)$ with $0\le \phi_{k,t}\le v$ a.e. in $\Om$. From \eqref{eq44}, we infer that
 	\begin{align*}
 	 \int_{\Om} \big(|\na w|^{p-2} + |\na w|^{q-2} \big) \na w \na(\phi_{k,t}-w) dx\ge \int_{\Om} f(x) \Upsilon_m^\prime(w) (\phi_{k,t}-w) dx.
 	\end{align*}
 Now, using the inequality \eqref{basicIn}, we deduce that
  \begin{align*}
 	 c\int_{\Om} \big(|\na w|+|\na\phi_{k,t}|\big)^{p-2}|\na(\phi_{k,t}-w)|^2 &\le \int_{\Om} \big(|\na\phi_{k,t}|^{p-2} \na\phi_{k,t} - |\na w|^{p-2}\na w  \big) \na(\phi_{k,t}-w) dx \\
 	 &\ \ +\int_{\Om} \big(|\na\phi_{k,t}|^{q-2} \na\phi_{k,t} - |\na w|^{q-2}\na w  \big) \na(\phi_{k,t}-w) dx \\
 	 &\le \int_{\Om} \big(|\na \phi_{k,t}|^{p-2} + |\na \phi_{k,t}|^{q-2} \big) \na \phi_{k,t} \na(\phi_{k,t}-w) \\
 	 & \quad- \int_{\Om} f(x) \Upsilon_m^\prime(w) (\phi_{k,t}-w) dx.
  \end{align*}
 This implies that 
  \begin{align*}
 	c&\int_{\Om}\big(|\na w|+|\na\phi_{k,t}|\big)^{p-2}|\na(\phi_{k,t}-w)|^2 -\int_{\Om} f(x) \big(\Upsilon_m^\prime(\phi_{k,t})-\Upsilon_m^\prime(w)\big) (\phi_{k,t}-w) \\
 	& \ \le \int_{\Om} \big(|\na \phi_{k,t}|^{p-2} + |\na \phi_{k,t}|^{q-2} \big) \na \phi_{k,t} \na(\phi_{k,t}-w-t\phi_k)- \int_{\Om} f(x) \Upsilon_m^\prime(\phi_{k,t}) (\phi_{k,t}-w-t\phi_k) \\
 	& \quad+t\Big[\int_{\Om} \big(|\na \phi_{k,t}|^{p-2} + |\na \phi_{k,t}|^{q-2} \big) \na \phi_{k,t} \na\phi_k - \int_{\Om} f(x) \Upsilon_m^\prime(\phi_{k,t}) \phi_k\Big].
  \end{align*}
 Simplifying it further and using the observation that the first term on the left is nonnegative, we obtain
 	\begin{equation}\label{eq46}
 	\begin{aligned}
 	 -\int_{\Om} f(x)  \big(\Upsilon_m^\prime(\phi_{k,t})-\Upsilon_m^\prime(w)\big)  (\phi_{k,t}-w) &\le \int_{\Om} \big(|\na v|^{p-2} + |\na v|^{q-2} \big) \na v \na(v-w-t\phi_k) \\
 	 & \ \ - \int_{\Om} f(x) \Upsilon_m^\prime(\phi_{k,t}) (\phi_{k,t}-w-t\phi_k)\\
 	 & \ \ +t\Big[\int_{\Om} \big(|\na \phi_{k,t}|^{p-2} + |\na \phi_{k,t}|^{q-2} \big) \na \phi_{k,t} \na\phi_k \\
 	 &\qquad\quad - \int_{\Om} f(x) \Upsilon_m^\prime(\phi_{k,t}) \phi_k\Big].
 	 \end{aligned}
 	\end{equation}
 From the definition of $\Upsilon_m$, it is clear that $v$ is super solution to the following equation
 	\begin{align*}
 	-\De_p v -\De_q v = f(x)\Upsilon_m^\prime(v).
 	\end{align*}
 Therefore, from \eqref{eq46}, we obtain
 	\begin{align*}
 	-\int_{\Om} f(x) \big(\Upsilon_m^\prime(\phi_{k,t})-\Upsilon_m^\prime(w)\big) (\phi_{k,t}-w) &\le t\Big[\int_{\Om} \big(|\na \phi_{k,t}|^{p-2} + |\na \phi_{k,t}|^{q-2} \big) \na \phi_{k,t} \na\phi_k  \\
 	&\qquad\quad -\int_{\Om} f(x) \Upsilon_m^\prime(\phi_{k,t}) \phi_k\Big].
 	\end{align*}
 Since supports of $\phi_{k,t}-w$ and $\phi_k$ are compact, using dominated convergence theorem, we pass the limit $t\ra 0$. Thus, 
 	\begin{align*}
 	\int_{\Om} \big(|\na w|^{p-2} + |\na w|^{q-2} \big) \na w \na\phi_k -\int_{\Om} f(x) \Upsilon_m^\prime(w) \phi_k \ge 0.
 	\end{align*}
 Taking $k\ra \infty$, we complete the proof of \eqref{eq45}. \\
 \textbf{Step II}: In this step we will show that $u \le w+\e$ in $\Om$ for all $\e>0$.\\
 Since $w\in W^{1,p}_0(\Om)$, the function $(u-w-\e)^+$ is in $W^{1,p}_0(\Om)$. By density argument and Fatou lemma, we see that \eqref{eq45} holds if $T_k\big((u-w-\e)^+\big)$ is taken as a test function, that is,
 	\begin{align}\label{eq47}
 	\int_{\Om} \big(|\na w|^{p-2} + |\na w|^{q-2} \big) \na w \na T_k\big((u-w-\e)^+\big) \ge \int_{\Om} f(x) \Upsilon_m^\prime(w) T_k\big((u-w-\e)^+\big),
 	\end{align}
 where $T_k(s) = \min\{s,k\}$. Let $\tl\phi_n\in C_c^\infty(\Om)$ be such that $\tl\phi_n \ra (u-w-\e)^+$ in $W^{1,p}_0(\Om)$. Set $\phi_{n,k}:= T_k\big( \min\{(u-w-\e)^+,\tl\phi_n^+ \}\big)$. It is easy to observe that $\phi_{n,k}\in W^{1,p}_0(\Om)\cap L^\infty_c(\Om)$, therefore by density argument, we obtain
 	\begin{align*}
 	\int_{\Om} \big(|\na u|^{p-2}+|\na u|^{q-2} \big) \na u \na \phi_{n,k} \le \int_{\Om} f(x)u^{-\de} \phi_{n,k}.
 	\end{align*}
 Consequently, using dominated convergence theorem, we get
  \begin{align}\label{eq48}
 	\int_{\Om} \big(|\na u|^{p-2}+|\na u|^{q-2} \big) \na u \na T_k\big((u-w-\e)^+\big) \le \int_{\Om} f(x)u^{-\de} T_k\big((u-w-\e)^+\big).
  \end{align}
 For $m>\e^{-\de}$, proceeding similar to lemma \ref{lem9}, subtracting \eqref{eq48} from \eqref{eq47}, we deduce that 
  \begin{align*}
 	c \int_{\Om} \big( |\na u| +|\na w| \big)^{p-2} |\na T_k\big((u-w-\e)^+\big)|^2 &\le \int_{\Om} f(x) \big( u^{-\de} - \Upsilon_m^\prime(w)\big) T_k\big((u-w-\e)^+\big) \\
 	&\le \int_{\Om} f(x) \big( \Upsilon_m^\prime(u)- \Upsilon_m^\prime(w)\big) T_k\big((u-w-\e)^+\big) \\
 	&\le 0,
  \end{align*}
 where the last inequality holds in the support of $(u-w-\e)^+$. This implies that $T_k\big((u-w-\e)^+\big)= 0$ a.e. in $\Om$ and since it is true for every $k>0$, we get $u \le w+\e$ in $\Om$. By the arbitrariness of $\e$ and the fact that $w\le v$, we obtain the required result of the theorem. \QED
 \begin{Corollary}
 	Let $\ba<2-1/p$, then there exists a unique weak solution of $(P)$.
 \end{Corollary}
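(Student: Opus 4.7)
The plan is to derive the corollary as an immediate consequence of the two main results already available in the excerpt: Theorem \ref{thm1} for existence and Theorem \ref{thm4} for the comparison principle. Since $\ba<2-\frac{1}{p}<p$ (because $p>1$), the hypothesis $\ba\in[0,p)$ of Theorem \ref{thm1} is satisfied, so there exists at least one weak minimal solution $u\in W^{1,p}_{loc}(\Om)\cap \mc C_{d_{\ba,\de}}$ in the sense of Definition \ref{dfn1}.

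For uniqueness, I would let $u_1,u_2\in W^{1,p}_{loc}(\Om)$ be any two weak solutions of $(P)$. By Definition \ref{dfn1}, each $u_i$ is simultaneously a weak sub-solution and a weak super-solution. Viewing $u_1$ as a sub-solution and $u_2$ as a super-solution, Theorem \ref{thm4} (whose hypothesis $\ba<2-\frac{1}{p}$ is exactly the standing assumption of the corollary) yields $u_1\le u_2$ a.e. in $\Om$. Interchanging the roles of $u_1$ and $u_2$ gives the reverse inequality, so $u_1=u_2$ a.e. in $\Om$.

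There is essentially no obstacle here beyond verifying that the hypotheses of the two cited theorems are met; the argument is a standard bootstrap from a comparison principle to uniqueness. The only point worth mentioning explicitly is that Theorem \ref{thm4} requires both the sub-solution and the super-solution to satisfy condition $(iii)$ of Definition \ref{dfn1} (i.e.\ $u^\ga\in W^{1,p}_0(\Om)$ for some $\ga\ge 1$), which is precisely built into the notion of weak solution adopted in this paper, so no additional verification is needed. Thus the corollary follows in two lines once Theorems \ref{thm1} and \ref{thm4} are in hand.
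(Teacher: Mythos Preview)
Your proposal is correct and follows essentially the same approach as the paper: the paper's proof simply takes two weak solutions, applies the comparison principle of Theorem~\ref{thm4} in both directions, and concludes equality. Your version is in fact slightly more complete, since you explicitly invoke Theorem~\ref{thm1} for existence (checking $\ba<2-\frac{1}{p}<p$) and remark that condition~(iii) of Definition~\ref{dfn1} is automatically satisfied---points the paper leaves implicit.
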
   
\begin{proof}
 Suppose there exist two weak solutions $u$ and $v$ of problem $(P)$ in $W^{1,p}_{loc}(\Om)$. Then, we can treat $u$ as a subsolution and $v$ as a supersolution to $(P)$. Consequently, the comparison principle implies $u\le v$  a.e. in $\Om$. Reversing the role, we get $u=v$ a.e. in $\Om$. \QED
\end{proof}

\textbf{Proof of Theorem \ref{thm3}}: On the contrary suppose there exists a solution $u_0\in W^{1,p}_{loc}(\Om)$ of $(P)$ and $\ga_0\ge 1$ such that $u_0^{\ga_0}\in W^{1,p}_0(\Om)$.
From \eqref{eqf}, we have
\begin{align*}
c_1 \; d(x)^{-\ba} \le f(x) \le c_2 \; d(x)^{-\ba} \quad \mbox{in }\Om_\varrho.
\end{align*}
For $\tl\ba<p$, which we will specify later, we choose $f_{\tl\ba}\in L^\infty_{loc}(\Om)$ such that $m f_{\tl\ba}(x)\le f(x)$ a.e. in $\Om$, for some constant $m\in(0,1)$ independent of $\tl\ba$, and for some positive constants $c_3,c_4$,
\begin{align*}
c_3 \; d(x)^{-\tl\ba} \le m f_{\tl\ba}(x) \le c_4 \; d(x)^{-\tl\ba} \quad \mbox{in }\Om_\varrho.
\end{align*}
Now we will construct a suitable subsolution near the boundary $\pa\Om$, to arrive at some contradiction. For $\e>0$, let $w_\e\in W^{1,p}_0(\Om)$ be the unique solution to the following problem
\begin{align}\label{eq40}
-\Delta_{p}w_\e -\Delta_{q} w_\e = mf_{\tl\ba,\e}(x) \; (w_\e+\e)^{-\delta}, 
\end{align}
where $ f_{\tl\ba,\e}(x):= 
\big( f_{\tl\ba}(x)^{\frac{-1}{\tl\ba}}+ \e^\frac{p-1+\de}{p-\tl\ba} \big)^{-\tl\ba}$ $ \mbox{ if }f_{\tl\ba}(x)>0$ and  
$0$ otherwise.

Next, we will prove that $w_\e \le u_0$ in $\Om$. We observe that $w_\e\in C^{1,\al}(\ov\Om)$ and $w_\e=0$ on $\pa\Om$. Therefore, for given $\sg>0$, there exists $\rho>0$ such that $w_\e \le \sg/2$ in $\Om_\rho$. Moreover, $w_\e -u_0-\sg\le -\sg/2<0$ in $\Om_\rho$, because $u_0\ge 0$, we have 
\[ \text{supp}(w_\e-u_0-\sg)^+ \subset \Om\setminus\Om_\rho\Subset\Om. \]
Therefore, $(w_\e-u_0-\sg)^+\in W^{1,p}_0(\Om)$ and from the weak formulation of \eqref{eq40}, we obtain
\begin{align}\label{eq41}
\int_{\Om} \big(|\na w_\e|^{p-2} + |\na w_\e|^{q-2} \big)\na w_\e \na T_k\big((w_\e-u_0-\sg)^+\big) = \int_{\Om} \frac{mf_{\tl\ba,\e}(x)}{ (w_\e+\e)^{\de}} T_k\big((w_\e-u_0-\sg)^+\big),
\end{align}
where $T_k(s):= \min\{ s, k\}$ for $k>0$ and $s\ge 0$. Furthermore, since $u_0\in W^{1,p}_{loc}(\Om)$ is a weak solution to $(P)$, for all $\psi\in C_c^\infty(\Om)$, we have 
\begin{align}\label{eq42}
\int_{\Om} \big(|\na u_0|^{p-2}+|\na u_0|^{q-2} \big) \na u_0 \na \psi = \int_{\Om} f(x)u_0^{-\de}\psi.
\end{align}
Let $\psi_n\in C_c^\infty(\Om)$ be such that $\psi_n \ra (w_\e-u_0-\sg)^+$ in $W^{1,p}_0(\Om)$. Set $\tl \psi_{n,k}:= T_k\big( \min\{(w_\e-u_0-\sg)^+,\psi_n^+ \}\big)$. Then, $\tl \psi_{n,k}\in W^{1,p}_0(\Om)\cap L^\infty_c(\Om)$, therefore from \eqref{eq42}, we infer that 
\begin{align*}
\int_{\Om} \big(|\na u_0|^{p-2}+|\na u_0|^{q-2} \big) \na u_0 \na \tl\psi_{n,k} = \int_{\Om} f(x)u_0^{-\de} \tl\psi_{n,k}.
\end{align*}
Using the fact that $\text{supp}(w_\e-u_0-\sg)^+\Subset \Om$ and Fatou lemma, we obtain
\begin{align}\label{eq43}
\int_{\Om} \big(|\na u_0|^{p-2}+|\na u_0|^{q-2} \big) \na u_0 \na T_k\big((w_\e-u_0-\sg)^+\big) &\ge \int_{\Om} f(x)u_0^{-\de} T_k\big((w_\e-u_0-\sg)^+\big) \nonumber\\
&\ge \int_{\Om} mf_{\tl\ba,\e}(x) u_0^{-\de} T_k\big((w_\e-u_0-\sg)^+\big).
\end{align}
Taking into account \eqref{eq41} and \eqref{eq43}, we deduce that 
\begin{align*}
&\int_{\Om} \big( |\na w_\e|^{p-2} \na w_\e -|\na u_0|^{p-2}\na u_0 \big)\na T_k\big((w_\e-u_0-\sg)^+\big) dx \\
&\qquad+ \int_{\Om} \big( |\na w_\e|^{p-2} \na w_\e -|\na u_0|^{p-2}\na u_0 \big)\na T_k\big((w_\e-u_0-\sg)^+\big) dx \\
&\le \int_{\Om} mf_{\tl\ba,\e}(x) \big((w_\e+\e)^{-\de} -u_0^{-\de} \big) T_k\big((w_\e-u_0-\sg)^+\big)dx \\
&\le \int_{\Om} m f_{\tl\ba,\e}(x) \big(w_\e^{-\de} -u_0^{-\de} \big) T_k\big((w_\e-u_0-\sg)^+\big) dx \le 0.
\end{align*}
To estimate the quantities on the left side, we use the inequality \eqref{basicIn}, therefore
\begin{align*}
C\int_{\Om} \big( |\na w_\e| +|\na u_0| \big)^{p-2} |\na T_k\big((w_\e-u_0-\sg)^+\big)|^2 \le 0,
\end{align*}
this implies that $T_k\big((w_\e-u_0-\sg)^+\big)= 0$ a.e. in $\Om$ and since it is true for every $k>0$, we get $w_\e \le u_0+\sg$ in $\Om$. Moreover, the arbitrariness of $\sg$ proves $w_\e\le u_0$ in $\Om$.
Owing to the estimates of $w_\e$ given by lemma \ref{lem7}, we have 
\[ \eta \big(\big(d(x)+ \e^\frac{p-1+\de}{p-\tl\ba} \big)^{\frac{p-\tl\ba}{p-1+\de}}-\e \big) \le w_\e(x) \le u_0(x) \quad \mbox{in } \Om.\] 
Since $u_0^{\ga_0}\in W^{1,p}_0(\Om)$, by Hardy inequality, we obtain 
\begin{align*}
\eta^{\ga_0p} \int_{\Om} \frac{\big(\big(d(x)+ \e^\frac{p-1+\de}{p-\tl\ba} \big)^{\frac{p-\tl\ba}{p-1+\de}}-\e \big)^{\ga_0 p}}{d^p} \le C \int_{\Om} \frac{u_0^{\ga_0 p}}{d^p} <\infty,
\end{align*}
choosing $\tl\ba<p$ sufficiently close to $p$ and taking $\e\downarrow 0$, we obtain the quantity on the left side is not finite, which yields a contraction. This completes proof of the theorem. \QED

 \section{H\"older regularity}
 In this section, we prove H\"older regularity results for gradient of weak solution to equation \eqref{Prb}, which is a general form of $(P)$. Subsequently, we prove H\"older continuity of weak solution to problem $(P)$ for the case $\ba+\de\ge 1$. 
 The interior H\"older regularity for solutions of \eqref{Prb} follows from \cite[Theorem 1.7, p.320]{liebm91}. To prove regularity results up to the boundary for gradient in the first case,
we flatten the boundary $\pa\Om$ by a $C^2$ diffeomorphism $\Phi$ such that 
$d(\Phi(x)) =\big(\Phi(x)\big)_N$ for all $x\in\mb R^N$, where $d$ denotes the distance from an open ball centered at the origin with $\Phi(x)_N\ge 0$. For $r>0$, we fix the following notation
{\small\begin{align*}
  B^+_r(y) &= \{ x\in\mb R^N : |x-y|<r, x_N-y_N>0 \}, \; B^0_r(y)= \{ x\in\mb R^N : |x-y|<r, x_N-y_N=0 \}. 
\end{align*}}
 As a consequence of the above transformation, problem in \eqref{Prb} takes the following form 
 \begin{align}\label{eq51}
 	-\text{div} A(x,\na u) =B(x,u,\na u)+g(x) \; \mbox{ in } B^+_1(0), \quad u=0 \; \mbox{ on }B_1^0(0).
 \end{align}
Conditions in Theorem \ref{thm5} imply 
 \begin{enumerate}
 	\item[(B1)] $0\le g(x)\le C x_N^{-\sg}$ for a.e. $B_1^+(0)$.
 	\item[(B2)] $0\le u(x)\le C x_N$ for a.e. $B_1^+(0)$.
 \end{enumerate} 
 For any $x_0\in B^+_{1/2}(0)$ and $0<R<1/2$, we consider the following perturbed problem
 \begin{align}\label{eq52}
 	-\text{div} A(x_0,\na v) =0 \; \mbox{ in } B^+_R(x_0), \quad v=u \; \mbox{ on }\pa B_R^+(x_0).
 \end{align}
 To estimate the various quantities involving the solution $v$, we consider the normalized form of problem \eqref{eq52} with $x_0=0\in \mb R^N$, that is,
 \begin{align}\label{eq53}
 -\text{div} A(0,\na v) =0 \; \mbox{ in } B^+_R(0), \quad v=u \; \mbox{ on }\pa B_R^+(0).
 \end{align}
 In what follows, we denote $B^+_r(0)$ as $B^+_r$ for all $r>0$. The next lemma is inspired from \cite[Lemma 5]{liebm88}.
\begin{Lemma}\label{lem4}
  There exists a unique solution $v\in W^{1,p}(B_R^+)$ of \eqref{eq53}. Furthermore, the following hold
	\begin{enumerate}
	  \item[(i)] $\sup_{B^+_{R/2}} |\na v| \le C \Big(R^{-\frac{N}{p}} \| \na v \|_{L^p(B_R^+)}+\chi\Big)$,
	  \item[(ii)] $\textnormal{osc}_{B_r^+} \na v \le C \big(\frac{r}{R}\big)^\vsg \Big(\sup_{B^+_{R/2}} |\na v|+\chi \Big)$ \quad \mbox{for } $0<r<R/7$,
	  \item[(iii)] $\int_{B^+_R} |\na v|^p \le C \int_{B^+_R} \big( 1+|\na u|\big)^p dx$,
	  \item [(iv)] $\sup_{B^+_{R}} |u-v| \le \textnormal{osc}_{B^+_{R}}  u \le \sup_{B^+_{R}} u\le C R$.
	 \end{enumerate}
  Here $\chi>0$ is a constant which depends only on $\La,\nu$ and $p$, and the constants $C,\vsg>0$ depend only on $\La,\nu,p,N$ and $\omega$. 
\end{Lemma}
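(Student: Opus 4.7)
The plan is to adapt Lieberman's strategy in \cite[Lemma 5]{liebm88} to the present $(p,q)$-structure imposed by (A1)-(A4). Existence and uniqueness of $v\in W^{1,p}(B_R^+)$ with $v-u\in W^{1,p}_0(B_R^+)$ solving \eqref{eq53} follow from the direct method: by (A1)-(A2), $A(0,\cdot)$ admits a strictly convex, $p$-coercive potential $H$, so the functional $w\mapsto\int_{B_R^+}H(\na w)$ on $u+W^{1,p}_0(B_R^+)$ has a unique minimizer, which is the weak solution.

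Parts (iii) and (iv) are direct. For (iii) I test \eqref{eq53} with $v-u$ to obtain $\int A(0,\na v)\cdot\na v=\int A(0,\na v)\cdot\na u$, then apply (A2) on the left and (A1) with Young's inequality on the right to extract $\int_{B_R^+}|\na v|^p\le C\int_{B_R^+}(1+|\na u|)^p$. For (iv) the weak comparison principle for the strictly monotone operator $-\textnormal{div}\,A(0,\cdot)$ gives $\inf_{\pa B_R^+}u\le v\le\sup_{\pa B_R^+}u$, while (B2) yields $0\le u\le Cx_N\le CR$ on $\pa B_R^+$, proving the three bounds of (iv) simultaneously.

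Parts (i) and (ii) form the core of the lemma and the principal difficulty. Since $A$ is frozen at $x=0$ in \eqref{eq53}, differentiating weakly in the direction $x_k$ shows that $w=\pa_k v$ solves a linear divergence equation $-\pa_i(a^{ij}(x)\pa_j w)=0$ with $a^{ij}(x)=\pa_{z_j}A^i(0,\na v(x))$, and the structure in (A1)-(A2) gives $(a^{ij})$ a degenerate ellipticity of double-phase type, comparable to $(|\na v|^{q-2}+|\na v|^{p-2})|\xi|^2$. For (i), I derive Caccioppoli inequalities by testing with $\phi^2 w\,(|\na v|^2+\chi^2)^{(t-2)/2}$ for a cutoff $\phi$ and parameters $t\ge p$; the constant $\chi>0$, depending only on $\La,\nu,p$, is introduced precisely to dominate the $q$-phase terms arising from the growth mismatch between the $p$- and $q$-Laplacians, a phenomenon absent in the homogeneous Lieberman setting. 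Moser iteration over $t$ then produces the $L^\infty$ bound on $|\na v|$ in $B^+_{R/2}$ by $R^{-N/p}\|\na v\|_{L^p(B^+_R)}+\chi$. For (ii), once $|\na v|$ is bounded, $(a^{ij})$ is uniformly elliptic up to the $\chi$ correction, so De Giorgi-Nash-Moser applied to $w$ yields Hölder oscillation decay with some $\vsg\in(0,1)$ depending only on $N,p,\nu,\La,\omega$; a dyadic iteration across concentric half-balls converts this into the $(r/R)^\vsg$ factor.

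The main obstacle throughout (i) and (ii) is the treatment of the flat boundary piece $B_R^0$. Tangential derivatives $\pa_k v$ ($k<N$) vanish on $B_R^0$ since $v=u\equiv 0$ there by (B2), so they extend by odd reflection across $\{x_N=0\}$ and inherit the interior Moser and De Giorgi estimates for the reflected, frozen-coefficient equation. The normal derivative $\pa_N v$ carries no boundary datum and must be recovered indirectly: one solves the equation for $A^N(0,\na v)$ in terms of the tangential derivatives and inverts the monotone map $z_N\mapsto A^N(0,z',z_N)$, whose strict monotonicity follows from (A1)-(A2). Transferring the $L^\infty$ and Hölder bounds from the tangential derivatives across this (degenerate) inversion is the delicate quantitative step, and is precisely the point where the lack of a single $\kappa$ above and below in our (A1)-(A2) forces us to depart from \cite{liebm88}, motivating the detailed self-contained proof announced after Theorem \ref{thm5}.
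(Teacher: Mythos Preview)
Your treatment of existence and of parts (iii)--(iv) matches the paper's. For (i) and (ii), however, you take a genuinely different route. You propose to differentiate \eqref{eq53}, run Moser iteration on powers of $|\na v|$ for the sup-bound, and then apply De Giorgi to the linearized equation for $\pa_k v$, handling the flat boundary by odd reflection of the tangential derivatives and algebraic recovery of $\pa_N v$ from $A^N$. The paper never differentiates the equation for the boundary estimates. For (i) it bounds $\sup_{B^+_{\rho R}} v/R$ via Trudinger's local maximum principle and Poincar\'e, then uses an explicit quadratic barrier to pass from $\sup v/r$ to $\sup v/x_N$ (inequality \eqref{eq22}), and finally applies the interior gradient estimate \cite[(5.3a)]{liebm91} together with a Caccioppoli inequality \eqref{cacciopp} and a covering argument to convert control of $v/x_N$ into control of $|\na v|$. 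For (ii) it establishes two auxiliary results: Lemma~\ref{lem2}, an oscillation decay for $v/x_N$ when $v$ solves a linear non-divergence equation (via the weak Harnack inequality), and Lemma~\ref{lem5}, which turns this into $\textnormal{osc}\,\na v$ by working with $w=v-\pa_{x_N}v(0)\,x_N$, the interior estimate of \cite[Theorem 1.7]{liebm91}, and an interpolation inequality; the $C^2$ regularity these lemmas need is supplied by approximating $A(0,\cdot)$ with smooth operators $A_{1/j}$ and passing to the limit.

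Your strategy is in the Tolksdorf--DiBenedetto tradition and can be carried out, but the reflection step as you describe it has a gap. For the odd reflection of $v$ (and hence of its tangential derivatives) to produce a solution of the \emph{same} frozen equation in the full ball, one needs the structural symmetry $A(0,-Rz)=-R\,A(0,z)$ with $R(z',z_N)=(z',-z_N)$. This holds for the pure $(p,q)$-Laplacian, but it is \emph{not} a consequence of (A1)--(A4), and it is in general destroyed by the boundary-flattening diffeomorphism $\Phi$ that produced \eqref{eq51}. Without it the reflected function does not satisfy a clean equation across $\{x_N=0\}$, and your ``inherit the interior Moser and De Giorgi estimates'' step fails. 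The paper's barrier-and-ratio approach sidesteps this entirely, which is exactly what allows it to cover the general (A1)--(A4) setting announced after Theorem~\ref{thm5}. If you want to keep your line of argument, you should replace the reflection by a direct half-ball Moser/De Giorgi iteration that uses only $\pa_k v|_{B_R^0}=0$ for $k<N$ together with the relation $\pa_N A^N=-\sum_{k<N}\pa_k A^k$ to control the normal component; this is closer to what is actually needed and does not require any symmetry of $A(0,\cdot)$.
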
 
\begin{proof}
 Proof of existence of the unique solution to \eqref{eq53} is standard.
 Then, we first prove (i) and (iii).
 To prove (i), we will apply the local maximum principle proved by Trudinger in \cite{trudinger} for the case of cubes, however the same proof can be adopted to get similar results for the case of balls, see for instance remark at the end of \cite[Theorem 1.2, p. 316]{liebm91}. Applying the aforementioned result to the nonnegative weak solution $v$ of \eqref{eq53}, for $\rho\in(0,1)$, we have
   \begin{align*}
   	\sup_{B^+_{\rho R}} v \le \frac{C}{(1-\rho)^{N}} \Big( R^{-\frac{N}{p}} \| v\|_{L^p(B^+_R)} + \chi R \Big),
   \end{align*}
 where $C,\chi>0$ are constants depending only on $N, \La, \nu, p, \|v\|_{L^\infty(B_R^+)}$. By means of the Poincar\'e inequality (note that $v=0$ on $B_R^0$), the above equation implies that
  \begin{align}\label{eq12}
  	\sup_{B^+_{\rho R}} \frac{v}{R} \le \frac{C}{(1-\rho)^{N}} \left( R^{-\frac{N}{p}} \Big(\int_{B_R^+} | \na v|^p~ dx \Big)^{1/p} + \chi \right).
  \end{align}
 To estimate the term on the left, we will use barrier argument as in \cite[Section 2]{liebm88}. Let us fix $r\in (0,R)$, $x_0\in B^+_{r/4}$ and set 
    \begin{align*}
      w(x)= 16 r^{-2} \sup_{B^+_{r}} v \; \Big( |x-(x_0',0)|^2 + N\frac{\La}{\nu} (rx_N - x_N^2) \Big) \quad \mbox{ for }x\in B_r^+.
    \end{align*}  
  It is easy to observe that $w\ge v$ on $\pa B_{r/2}^+$ and by direct computation, we have
   \begin{align*}
   	 -\text{div} A(0,\na w) \ge 0 \; \mbox{ in } B^+_{r/2}.
   \end{align*}
 That is, $w$ is a classical super solution, therefore employing the maximum principle, we obtain $w\ge v$ in $B_{r/2}^+$. Evaluating these at $x=x_0$, 
    \begin{align*}
     \frac{v(x_0)}{x_{0N}}\le C r^{-1} \sup_{B^+_{r}} v,
    \end{align*}
 thus, by arbitrariness of $x_0$, we get
  \begin{align}\label{eq22}
  	\sup_{B^+_{r/4}} \frac{v(x)}{x_N} \le C\sup_{B^+_{r}} \frac{v(x)}{r}.
  \end{align}
Before proceeding further, we recall the following Caccioppoli type inequality: for any $\varrho>0$ and $x_1\in\mb R^N$ such that $B_\varrho(x_1)\subset B_R^+$, the following holds
\begin{align}\label{cacciopp}
 \int_{B_{\varrho/2}} |\na v|^p dx\leq C \int_{B_{\varrho}}\left(1+\Big(\frac{v}{\varrho}\Big)^p  \right)dx,
\end{align}
where $C$ is a positive constant (independent of $v$). Indeed, by taking $\xi^p v$ as a test function in the weak formulation of \eqref{eq53}, where $\xi\in C_c^\infty(B_R^+)$ is a cut off function, and using the structure conditions of $A$ (A1-A2) together with Young's inequality, we get
 \begin{align*}
 	\int_{B^+_{R}} \xi^p|\na v|^p dx \leq C \int_{B^+_R}\left(\xi^p+|v|^p|\na\xi|^p  \right)dx.
 \end{align*}
Specifying the choice of $\xi$ in the above expression, we obtain \eqref{cacciopp}. 
 Next, we estimate $\sup|\na v|$ by virtue of \eqref{eq22} in $B_{r/16}^+$. Let $x_0\in B_{r/16}^+\setminus B^0_R$ and $x_1$ be the projection of $x_0$ on $B_R^0$. Then, $d:=d(x_0, B_R^0)\le r$. By the interior gradient estimate of \cite[(5.3a)]{liebm91} and Caccioppoli's inequality \eqref{cacciopp}, we deduce that
 \begin{align*}
 	|\na v(x_0)|^p \le C d^{-N} \int_{B_\frac{d}{4}(x_0)} (1+|\na v|^p) dx\le C d^{-N} \int_{B_\frac{d}{2}(x_0)} \Big(1+\big(\frac{v}{d}\big)^p\Big) \le C \sup_{B_\frac{d}{2}(x_0)} \Big(1+\big(\frac{v}{d}\big)^p\Big).
 \end{align*}
 Since $x_N/2 \le d$ in $B_{d/2}(x_0)$ and $B_{d/2}(x_0)\subset B^+_{2d}(x_1)\subset B^+_{r/8}(x_1)\subset B^+_{r/4}$, from above inequality and \eqref{eq22}, we obtain
 \begin{align*}
 	|\na v(x_0)|^p \le C \sup_{B_{d/2}(x_0)} \Big(1+\big(\frac{v}{d}\big)^p\Big) \le C \sup_{B^+_{r/4}(0)} \Big(1+\big(\frac{v}{x_N}\big)^p\Big) \le C \sup_{B^+_{r}} \Big(1+\big(\frac{v}{r}\big)^p\Big).
 \end{align*}
 Using the fact that $p>1$ and arbitrariness of $x_0\in B^+_{r/16}$, we get 
 \begin{align*}
 	\sup_{B^+_{r/16}}|\na v| \le C \Big(1+\sup_{B^+_{r}} \frac{v}{r}\Big).
 \end{align*}
On account of interior gradient estimate \cite[(5.3a)]{liebm91} and Caccioppoli inequality in the interior, we have similar bound for interior balls too. Therefore, by covering argument, for suitable $r>0$, we obtain 
 \begin{align}\label{eq13}
 	\sup_{B^+_{R/2}}|\na v| \le C \Big(1+\sup_{B^+_{R}} \frac{v}{R}\Big).
 \end{align}
 Now, coupling \eqref{eq12} and \eqref{eq13}, we get the required result in (i).  Proof of (iii) follows exactly on the same lines of proof of \cite[Lemma 5, (3.3)]{liebm88}. \QED
\end{proof} 
To complete the proof of lemma \ref{lem4}, we need the following two lemmas.
 \begin{Lemma}\label{lem2}
  Let $L$ be an elliptic operator of the form $Lu = a^{ij} D_{ij} u$ with 
 \begin{align*}
  \nu |z|^{p-2}|\xi|^2 \le a^{ij}(z) \xi_i \xi_j \le \La (|z|^{p-2}+|z|^{q-2})|\xi|^2 \quad \mbox{for }x\in B_1^+, \ z,\xi\in\mb R^N,
 \end{align*} 
 where $\La,\nu$ are positive constants with $\nu\le\La$. Furthermore, let us assume $u\in C^2(B_1^+)$ be such that $0\le u \le H x_N$ in $B_1^+$ and $L u(x) = 0$. Then, for $\rho=\rho(N,\La,\nu)$, small enough and $R< 1$, there exist positive constants $C$ and $\vsg$, depending only on $N,\La,\nu$, such that the following holds
    \begin{align*}
      \textnormal{osc}_{B_r^+} \frac{u}{x_N} \le C \Big(\frac{r}{R} \Big)^\vsg \left(\textnormal{osc}_{B_{R/2}^+} \frac{u}{x_N} + \chi \right),
    \end{align*}
  for $r\in(0,R/2)$.
 \end{Lemma}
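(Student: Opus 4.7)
The strategy is the classical boundary Harnack / Krylov oscillation decay. First observe that $L x_N = a^{ij}(z)D_{ij}(x_N)=0$, so $x_N$ itself is a solution of $L$. Setting $M(r) = \sup_{B_r^+} (u/x_N)$ and $m(r) = \inf_{B_r^+} (u/x_N)$, the functions
$U = u - m(R/2)\,x_N$ and $V = M(R/2)\,x_N - u$ are nonnegative solutions of $LU=LV=0$ in $B_{R/2}^+$ that both vanish on the flat portion $B_{R/2}^0$. Controlling $\textnormal{osc}(u/x_N)$ reduces to obtaining a pointwise lower bound, of the form $c\,(M(R/2)-m(R/2))\,x_N$, for one of $U,V$ on a smaller half-ball $B_{\rho R}^+$.

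Next I would normalize the operator by dividing the coefficients by $|z|^{p-2}$, producing a constant-coefficient uniformly elliptic operator whose ellipticity ratio is bounded by $(\La/\nu)(1+|z|^{q-p})$; the excess $|z|^{q-p}$ in the degenerate regime (small $|z|$, using $q<p$) is precisely what the additive constant $\chi$ is designed to absorb, exactly as in the proof of Lemma~\ref{lem4}. With this normalization the Krylov--Safonov weak Harnack inequality and local maximum principle apply to $U$ and $V$ on any ball compactly contained in $B_{R/2}^+$, with constants depending only on $N,\La,\nu$.

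Now apply a dichotomy at the reference point $A_R=(0',R/4)$: either $U(A_R)/A_{R,N}\ge \tfrac12(M(R/2)-m(R/2))$ or $V(A_R)/A_{R,N}\ge\tfrac12(M(R/2)-m(R/2))$; without loss of generality assume the latter. The interior Harnack inequality applied on a ball of radius $R/8$ around $A_R$ upgrades this into $V\ge c(M(R/2)-m(R/2))\,R$ throughout that ball. One then runs a barrier argument of the same type as in step~(i) of Lemma~\ref{lem4}, comparing $V$ from below with a quadratic supersolution of the form $x\mapsto \alpha\,x_N - \beta(|x-(x_0',0)|^2+N(\La/\nu)(R x_N - x_N^2))$ suitably normalized, to propagate the bound right up to $B_{R/2}^0$. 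The outcome is $V\ge c'(M(R/2)-m(R/2))\,x_N$ in $B_{\rho R}^+$ for some $\rho\in(0,1/2)$ and $c'>0$ depending only on $N,\La,\nu$, hence
\[
\textnormal{osc}_{B_{\rho R}^+}(u/x_N) \le (1-c')\,\textnormal{osc}_{B_{R/2}^+}(u/x_N) + C\chi.
\]

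Iterating this geometric decay along the sequence $r_k=\rho^k(R/2)$ in the standard way yields the claimed H\"older estimate with exponent $\vsg = \log(1/(1-c'))/\log(1/\rho)\in(0,1)$ and constant depending only on $N,\La,\nu$. The main obstacle I anticipate is the bookkeeping around $\chi$: verifying that the contribution coming from the possibly degenerate factor $|z|^{q-p}$ at small $|z|$ enters as a bounded additive inhomogeneity at every scale rather than compounding geometrically. This is handled by performing the barrier construction in rescaled variables $y=x/R$ and dividing the equation by $|z|^{p-2}$ before each iteration, so that the lower-order correction inherited from the upper bound $\La(|z|^{p-2}+|z|^{q-2})$ is frozen into the single constant $\chi$ and does not depend on the scale $R$ or on the iteration step.
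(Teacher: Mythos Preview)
Your outline is essentially the paper's approach: the paper's ``proof'' consists solely of the sentence that the result follows from the weak Harnack inequality of \cite{liebm91} together with the iteration scheme of \cite[Lemma~5.2]{liebm86}, and what you have written is precisely a fleshed-out version of that Krylov-type boundary oscillation decay for $u/x_N$ (dichotomy on $U,V$, interior Harnack, barrier to the flat boundary, geometric iteration).

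One technical point deserves tightening. Your plan to ``divide by $|z|^{p-2}$ and apply Krylov--Safonov'' does not quite work as stated: after normalization the upper ellipticity constant becomes $(\La/\nu)(1+|z|^{q-p})$, which blows up as $|z|\to 0$ since $q<p$, so the standard Krylov--Safonov constants are not controlled by $N,\La,\nu$ alone. The paper sidesteps this by invoking the weak Harnack inequality from \cite{liebm91}, which is formulated directly for operators satisfying structure conditions of exactly this degenerate $(p,q)$ type and already carries the additive constant $\chi$ in its conclusion. So rather than normalizing first and then appealing to the uniformly elliptic theory, you should apply Lieberman's version to $U$ and $V$ directly; the $\chi$ then enters once per scale as a fixed additive term, and the iteration proceeds exactly as you describe. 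With that substitution your argument is complete and matches the paper.
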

 \begin{proof}
 Proof follows using the weak Harnack inequality of \cite{liebm91} and can be completed proceeding similarly to \cite[Lemma 5.2]{liebm86}. 
\QED
 \end{proof}
Next, we have the control over oscillation of $\na v$, which is inspired from \cite[Lemma 4]{liebm88}.
\begin{Lemma}\label{lem5}
  Let $v\in W^{1,p}(B_R^+)$ be the unique solution of \eqref{eq53} and assume $v\in C^2(B_R^+)$. Then, there exists a positive constant $C=C(N,\omega,\La,p)$ such that
    \begin{align*}
       \textnormal{osc}_{B^+_r} \na v \le C \Big(\frac{r}{R} \Big)^\vsg \Big(\sup_{B^+_{R/2}} |\na v| + \chi \Big),
    \end{align*}
  for $r\in(0,R/7)$.
\end{Lemma}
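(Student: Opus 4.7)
The strategy is to differentiate the frozen equation in tangential directions, apply the Hölder oscillation bound of Lemma \ref{lem2} to the resulting linear equations, and finally recover the normal component of $\na v$ by algebraic manipulation of the PDE itself. Since $v\in C^2(B_R^+)$ by assumption, differentiating $-\textnormal{div}\,A(0,\na v)=0$ with respect to $x_k$ for each tangential index $k\in\{1,\dots,N-1\}$ gives that $w_k:=D_k v$ solves
\begin{equation*}
-D_i\bigl(a^{ij}(x)\,D_j w_k\bigr)=0\quad\text{in }B_R^+,\qquad a^{ij}(x):=\pa_{z_j}A^i(0,\na v(x)).
\end{equation*}
Hypotheses (A1)--(A2) force the ellipticity
\begin{equation*}
\nu|\na v|^{p-2}|\xi|^2\le a^{ij}(x)\xi_i\xi_j\le\La\bigl(|\na v|^{p-2}+|\na v|^{q-2}\bigr)|\xi|^2,
\end{equation*}
which matches exactly the structural hypothesis of Lemma \ref{lem2}.

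Since $v\equiv 0$ on $B_R^0$, each tangential derivative $w_k$ vanishes there. Setting $M_k:=\sup_{B_{R/2}^+}w_k$ and $m_k:=\inf_{B_{R/2}^+}w_k$, both $M_k-w_k$ and $w_k-m_k$ are nonnegative solutions of the above linear equation. A barrier argument patterned on the one leading to \eqref{eq22} in the proof of Lemma \ref{lem4}(i) yields the linear boundary decay
\begin{equation*}
0\le M_k-w_k\le H\,x_N,\qquad 0\le w_k-m_k\le H\,x_N\quad\text{in }B_{R/2}^+,
\end{equation*}
with $H\le C\bigl(\sup_{B_{R/2}^+}|\na v|+\chi\bigr)$. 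Invoking Lemma \ref{lem2} on each of these then produces
\begin{equation*}
\textnormal{osc}_{B_r^+}\frac{w_k}{x_N}\le C\Bigl(\frac{r}{R}\Bigr)^{\vsg}\Bigl(\textnormal{osc}_{B_{R/2}^+}\frac{w_k}{x_N}+\chi\Bigr),
\end{equation*}
and combining with the barrier bound converts this into the desired oscillation estimate on $w_k$ itself, for all tangential indices $k<N$.

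It remains to control $D_N v$. For this I would write the equation in non-divergence form, $a^{ij}(x)D_{ij}v=-D_i a^{ij}\cdot D_j v$, and exploit that $a^{NN}(x)\ge\nu|\na v|^{p-2}$ is bounded below on the set where $|\na v|\gtrsim\chi$. This algebraically expresses $D_{NN}v$ in terms of the mixed second derivatives $D_{kN}v=D_k w_N$ and the purely tangential ones $D_{k\ell}v$ ($k,\ell<N$), both of which are derivatives of the already controlled $w_k$'s; integrating the resulting pointwise bound along segments normal to $B_R^0$ from reference points with controlled $D_N v$ then delivers an oscillation bound on $D_N v$ of the same form as for the tangential components, completing the claim on $\na v$.

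The principal technical obstacle is the possible degeneracy of $a^{ij}$ on the set $\{|\na v|\simeq 0\}$, where the lower ellipticity collapses and Lemma \ref{lem2} cannot be invoked directly. Following Lieberman \cite{liebm88}, this is handled by the standard dichotomy: if $\sup_{B_{R/2}^+}|\na v|\le\chi$, the conclusion is immediate from $\textnormal{osc}\,\na v\le 2\sup|\na v|\le 2\chi$; otherwise one restricts attention to the region where $|\na v|$ is comparable to $\sup|\na v|$, making the ellipticity uniform after natural rescaling, and absorbs the contribution from the degenerate set into the additive $\chi$ term. The restriction $r<R/7$ reflects the buffer required by this dichotomy combined with the covering used to pass from interior gradient estimates to boundary ones.
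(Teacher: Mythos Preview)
Your plan diverges from the paper's argument and, as written, contains a real gap in the step where you invoke Lemma~\ref{lem2}.

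The functions $M_k-w_k$ and $w_k-m_k$ do \emph{not} satisfy the hypothesis $0\le u\le H x_N$ of Lemma~\ref{lem2}. Since $w_k=D_kv$ vanishes on $B_R^0$, on the flat boundary you have $M_k-w_k\equiv M_k$ and $w_k-m_k\equiv -m_k$, which are (generically nonzero) constants, so the bound $u\le Hx_N$ fails at $x_N=0$. The barrier argument of \eqref{eq22} produced $v/x_N$ control precisely because $v$ vanishes on $B_R^0$; it cannot yield ``linear boundary decay'' for a function that is a nonzero constant there. Consequently Lemma~\ref{lem2}, whose conclusion concerns $\textnormal{osc}\,(u/x_N)$, is inapplicable to your shifted tangential derivatives. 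Even if you applied it to $w_k$ itself (which does vanish on $B_R^0$ but is not nonnegative), you would obtain decay of $\textnormal{osc}_{B_r^+}(w_k/x_N)$, and this does not convert into the needed decay of $\textnormal{osc}_{B_r^+} w_k$: you only recover $\textnormal{osc}_{B_r^+} w_k\lesssim r$, i.e.\ the gradient bound you already had, with no extra power of $r/R$. Your recovery of $D_Nv$ is also problematic: expressing $D_{NN}v$ through $D_{kN}v$ and $D_{k\ell}v$ and integrating requires pointwise control of \emph{second} derivatives of $v$, whereas your tangential step at best controls oscillations of the first derivatives $w_k$.

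The paper proceeds quite differently and avoids differentiating the equation. It applies Lemma~\ref{lem2} directly to $v$ (which genuinely satisfies $0\le v\le Hx_N$), obtaining decay of $\textnormal{osc}_{B_r^+}(v/x_N)$. Setting $w(x)=v(x)-D_Nv(0)\,x_N$, one has $|w(x)|\le C\,x_N\,\textnormal{osc}(v/x_N)$, so $\sup|w|$ inherits the decay. Then the interior $C^{1,\al}$ estimate \cite[Theorem~1.7]{liebm91} together with the interpolation $\e\sup_{D_\e}|\na w|\lesssim \|w\|_{L^\infty}+\mu^\al[w]^*_{1+\al}$ converts control of $\sup|w|$ into control of $|\na w(y)|=|\na v(y)-D_Nv(0)e_N|$, which is exactly $\textnormal{osc}\,\na v$ up to a constant. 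This route handles all components of $\na v$ simultaneously and never needs the barrier to act on derivatives of $v$.
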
 
\begin{proof}
  As a consequence of Lemma \ref{lem2}, we can define
  \begin{align*}
  	w(x)=v(x)-\pa_{x_N} v(0) x_N.
  \end{align*}
Therefore, for any $y\in B^+_{r}$, by the interior regularity estimate of \cite[Theorem 1.7]{liebm91} and using equivalence form of Campanato and H\"older norms, we have 
  \begin{align}\label{eq23}
  	\text{osc}_{B_\rho(y)} \na w \le C \Big(\frac{\rho}{r} \Big)^\al \big(\text{osc}_{B_r(y)} \na w + \chi r^\sg \big),
  \end{align}
 for $\sg>0$, if $0<\rho<r$.
Next, let $D=B_{\frac{y_N}{2}}(y)$ and $\e\in(0,y_N)$. For $x,y\in D_\e:= \{ x\in D: \text{dist}(x,  B^0_1)>\e \}$, if $|x-y|<\e$, we take $\rho=|x-y|$ and $r=\e$ in \eqref{eq23}, thus
 \begin{align*}
 	\e^\al |\na w(x)-\na w(y)| |x-y|^{-\al} \le C \big(\sup_{D_\e} |\na w|+\chi \e^\sg\big).
 \end{align*}
 The above inequality is trivially true if $|x-y|\ge \e$. Therefore, multiplying by $\e$ and taking supremum over $\e\in(0,y_N)$, we obtain
  \begin{align}\label{eq24}
    [w]^*_{\al+1, D}:=\sup_{\e>0}\left(\e^{\al+1} \sup_{D_\e}\frac{|\na w(x)-\na w(y)|}{|x-y|^\al} \right) \le C \Big(\sup_{\e>0} \big(\e\sup_{D_\e} |\na w|\big) + \chi y_N^{\sg+1} \Big).  	
  \end{align}
  Using the standard interpolation identity
  \[ \sup_{\e>0} \big(\e\sup_{D_\e} |\na w|\big) \le 2 \mu^{-1} \|w\|_{L^\infty(D)}+ 2^{1+\al} \mu^\al  [w]^*_{\al+1, D} \quad \forall \ \mu\in(0, 1/2],  \]
  for suitable $\mu$ and \eqref{eq24}, we obtain
  \begin{align}\label{eq15}
  	y_N |\na w(y)|\le \sup_{\e>0} \big(\e\sup_{D_\e} |\na w|\big) \le C \Big( \sup_{B_{\frac{y_N}{2}}(y)} |w| +\chi y_N^{\sg+1} \Big).
  \end{align}
 For $x\in B_{\frac{y_N}{2}}(y)$, we have $y_N/2 \le x_N \le 3y_N/2$ and since $y\in B^+_r$, we get $B_{\frac{y_N}{2}}(y) \subset B^+_{3r/2}$. Then, for $x\in B_{\frac{y_N}{2}}(y)$, using the definition of $w$ and Lemma \ref{lem2}, for $r\in(0,R/7)$, we deduce that 
 \begin{align*}
 	|w(x)| \le C y_N \ \textnormal{osc}_{B^+_{3r/2}} \frac{v}{x_N} 
 	  & \le  C y_N \Big(\frac{r}{R} \Big)^\vsg \left(\textnormal{osc}_{B^+_\frac{3R}{14}} \frac{v}{x_N} + \chi \right) 
 	  &\le  C y_N \Big(\frac{r}{R} \Big)^\vsg \Big(\sup_{B^+_{R/4}} \frac{v}{R} +  \chi  \Big),
 \end{align*}
 where in the last inequality we have used \eqref{eq22}. Now from \cite[(2.11)]{colombo}, we obtain 
 \begin{align}\label{eq16}
 	|w(x)| \le C y_N \;\Big(\frac{r}{R} \Big)^\vsg G^{-1} \left[ |B^+_{R/2}|^{-1} \int_{B^+_{R/2}} G(|\na v|) dx\right] \le C  y_N \;\Big(\frac{r}{R} \Big)^\vsg \Big(\sup_{B^+_{R/2}} |\na v|+  \chi  \Big). 
 \end{align}
 Coupling \eqref{eq15} and \eqref{eq16}, and observing the fact that $\text{osc}_{B^+_{r}} \na v =\text{osc}_{B^+_{r}} \na w \le 2 \sup_{B^+_{r}} |\na w|$, we complete the proof of lemma.
\QED
\end{proof}
\textbf{Proof of Lemma \ref{lem4} continued}: Proof of (ii) and (iv) of the lemma follows from the approximation argument as in \cite[Lemma 5.2]{liebm91}. Indeed, we can construct a sequence of operators $A_{1/j}(\cdot)$, with sufficiently smooth coefficient, converging uniformly to $A(0,\cdot)$. Here the coefficient of $A_{1/j}$ exhibit similar bounds depending only on $\La$, $\nu$, $p$, $\omega$ and $N$. For each $j$ large enough, by standard existence theorem, we get a $C^2$ solution to the problem 
\begin{align*}
 -\text{div} A_{1/j}(\na v_j) =0 \; \mbox{ in } B^+_R(0), \quad v_j=u \; \mbox{ on }\pa B_R^+(0).
\end{align*} 
Then, proof of (ii) follows from Lemma \ref{lem5} and noting the fact that constant $C$ depends only on $N,\ba,\La,p$ together with the convergence of $v_j$ to $v$.  Moreover, a consequence of maximum principle implies that $v_j$ attains its maximum and minimum on the boundary, and thus the same will be true for $v$ also. Since $u\ge 0$, we get $\textnormal{osc } u \le \sup u$ and the fact $u\le C x_N$ completes proof of (iv) of the lemma.\QED

 \textbf{Proof of Theorem \ref{thm5}}: We take $u-v$ as a test function in the weak formulations of \eqref{eq51} and \eqref{eq53}, on account of Lemma \ref{lem4}(iv), (A3) and (A4), we deduce that 
  \begin{align*}
	\int_{B_R^+} \big(A(0, \na u)- A(0,\na v) \big) \na (u-v) &= \int_{B_R^+} \big(A(x, \na u)- A(0,\na v) \big) \na (u-v)  \nonumber\\
	& \ \ + \int_{B_R^+} \big(A(0, \na u)- A(x,\na u) \big) \na (u-v)  \nonumber\\ 
	&\le \int_{B_R^+} \big(|B(x,u,\na u)|+g(x)\big)|u-v| \\
	&\quad+ \La\int_{B^+_R} |x|^\omega \big(1+|\na u|\big)^{p-1} |\na u-\na v| \nonumber \\
	&\le C\left[ \La R \int_{B_R^+} (1+|\na u|)^p+R \int_{B_R^+} x_N^{-\sg}~dx \right.\\
	&\left.\qquad + R^\omega \int_{B^+_R} \big(1+|\na u|\big)^{p-1} |\na u-\na v| \right]. 
  \end{align*}
 Noting the fact that $\sg,\omega,R<1$ and using Young inequality, for $\varepsilon>0$, we obtain
 \begin{align}\label{eq54}
  \int_{B_R^+} \big[A(0, \na u)- A(0,\na v) \big] \na (u-v) &\le c R^{N+1-\sg}	+\varepsilon R^\omega \int_{B^+_R} |\na (u- v)|^p \nonumber\\
  &\quad+ (C_\varepsilon+C) R^\omega  \int_{B^+_R} \big[1+|\na u|\big]^{p}
 \end{align} 
 where $c,C,C_\varepsilon>0$  are constants. We fix the following  
 \begin{align*}
  J(w;R) =\int_{B_R^+} |\na w|^p dx \ \mbox{ and } I(w;R)=  \int_{B_R^+} |\na w-(\na w)_R|^p,
 \end{align*}
 with $(w)_R= \frac{1}{|B_R^+|} \int_{B_R^+} w(x)dx$. 
 Now proceeding similar to \cite[Page 150]{giacomoni}, for $ p\ge 2$, we have
	\begin{align*}
	\big(A(0, \na u)- A(0,\na v) \big) \na (u-v) \ge \nu \kappa_p |\na (u-v)|^p,
	\end{align*}
 where $\kappa_p>0$ is a constant. Therefore, using this in \eqref{eq54}, for suitable $\varepsilon>0$, we obtain
	\begin{align}\label{eq56}
	J(u-v;R)=\int_{B_R^+} |\na (u-v)|^p \le C \left(R^{N+1-\sg}+ R^\omega \int_{B^+_R} \big[1+|\na u|\big]^{p}\right).
	\end{align}
For the case $1<p<2$, we recall the following result of \cite[Lemma 1]{tolksdorf}
 \begin{align*}
  \nu \int_{B^+_R} \big(1+|\na u|+|\na v|\big)^{p-2}|\na(u-v)|^2 \le C \int_{B_R^+} \big[A(0, \na u)- A(0,\na v) \big] \na (u-v). 
 \end{align*}
On the account of Lemma \ref{lem4}(iii) and by repeated application of H\"older and Young inequality, for $\ga>0$, we deduce that 
 \begin{align*}
 J(u-v;R)&\le \left[ \int_{B^+_R} \big(1+|\na u|+|\na v|\big)^{p-2}|\na(u-v)|^2 \right]^\frac{p}{2} \left[\int_{B^+_R} \big(1+|\na u|\big)^p\right]^\frac{2-p}{2} \\
 &\le C\left[R^\frac{-2\ga}{p} \int_{B_R^+} \big[A(0, \na u)- A(0,\na v) \big] \na (u-v) +  R^\frac{2\ga}{2-p} \int_{B^+_R} \big(1+|\na u|\big)^p \right].
 \end{align*}
Using \eqref{eq54} in the above expression, we get
\begin{align*}
 J(u-v;R)&\le C R^\frac{-2\ga}{p} \Big[ c R^{N+1-\sg}	+ \varepsilon R^\omega \int_{B^+_R} |\na (u- v)|^p + (C_\varepsilon +C) R^\omega \int_{B^+_R} \big[1+|\na u|\big]^{p}\Big] \\
 &\qquad+ C R^\frac{2\ga}{2-p} \int_{B^+_R} \big(1+|\na u|\big)^p.
\end{align*}
We choose $\ga>0$ sufficiently small such that $1-\sg-2\ga/p>0$ and $\omega-2\ga/p>0$, and we set $\ga_0=\min\{ 1-\sg-2\ga/p, \omega-2\ga/p, 2\ga/(2-p) \}>0$. Therefore, for sufficiently small $\varepsilon>0$,  we infer that
\begin{align}\label{eq57}
 J(u-v;R)&\le C R^{\ga_0} \Big[ R^N+ \int_{B^+_R} \big(1+|\na u|\big)^p \Big].
\end{align}
Combining \eqref{eq56} and \eqref{eq57}, we obtain
\begin{align}\label{eq58}
 J(u-v;R)=\int_{B^+_R}|\na (u-v)|^p\le C \Big[ R^{N+\ga_0}+  R^{\ga_0}  \big(R^N+J(u;R)\big) \Big], \quad\mbox{ for all }p>1.
\end{align}
 Furthermore, using Lemma \ref{lem4}(i) and (iii), we observe that 
\begin{align}\label{eq59}
 J(v;r)= \int_{B^+_R} |\na v|^p \le C r^N \Big[\sup_{B^+_{R/2}} |\na v|\Big]^p &\le Cr^N\Big[R^{-N} J(v;R)+\chi \Big] \nonumber\\
 &\le C \Big[R^N+\Big(\frac{r}{R}\Big)^{N}J(u;R) \Big].
\end{align}
Taking into account \eqref{eq58} and \eqref{eq59} together with the estimate of $J(u;1)$, due to \cite[Lemma 3]{simon}, a standard procedure yields
	\begin{align}\label{eq55}
	 J(u;R)\le C_\tau R^{N-\tau} \mbox{ for }0<R<1 \mbox{ and any }\tau>0.
	\end{align}
Thus, \eqref{eq58} reduce to 
 \begin{align}\label{eq60}
 	J(u-v;R)\le C \big( R^{N+\ga_0}+  R^{N-\tau+\ga_0} \big), \quad\mbox{ for all }p>1.
 \end{align}
Now, consider
\begin{align*}
 I(u;r) = \int_{B_r^+} |\na u-(\na u)_r|^p &\le C\left[ \int_{B_r^+} |\na (u- v)|^p + \int_{B_r^+} |\na v- (\na v)_r|^p \right] \\
  &\le C \Big[J(u-v;R)+r^N \big(\textnormal{osc}_{B_r^+} \na v\big)^p\Big].
\end{align*}
Then, using \eqref{eq60} and Lemma \ref{lem4}(i)-(iii) in the above expression, we deduce that 
 \begin{align*}
  I(u;r)\le C\left[R^{N-\tau+\ga_0}+ r^N \Big(\frac{r}{R}\Big)^{\vsg p} \big(R^{-N} J(v;R)+\chi \big) \right] \le C\left[ R^{N-\tau+\ga_0}+ r^{N+\vsg p} R^{-\vsg p-\tau} \right],
 \end{align*}
where in the last inequality we have used \eqref{eq55} to estimate $J(u;R)$ and the fact that $0<R<1$. Setting $R=r^\theta$, appropriate choice of $\tau$ and $\theta$ yields
 \begin{align*}
  I(u;r)\le C r^{N+\al p},	
 \end{align*} 
for some positive constant $\al$. It is clear that the constant $\al$ depends only on $N,\omega,\sg,\nu,\La$ and $p$. By the equivalence of Campanato and H\"older norms, and covering argument, we get $u\in C^{1,\al}(\ov\Om)$.\QED

 \textbf{Proof of Theorem \ref{thm6}}: Since $u$ is a weak solution to $(P)$, for all $\phi\in C^\infty_c(\Om)$, the following holds
 \begin{align}\label{eq20}
\int_{\Om} |\na u|^{p-2} \na u\na \phi+ \int_{\Om} |\na u|^{q-2} \na u\na \phi = \int_{\Om} f(x)\;u^{-\de}\phi.
\end{align}
For $\ga>1$, to be chosen later, from  \eqref{eq20}, we infer that
\begin{align*}
  \ga^{1-p}\int_{\Om} u^{(1-\ga)(p-1)} |\na u^\ga|^{p-2} \na u^\ga\na \phi+ \ga^{1-q}\int_{\Om} u^{(1-\ga)(q-1)}|\na u^\ga|^{q-2} \na u^\ga\na \phi = \int_{\Om} f(x)u^{-\de}\phi,
\end{align*}
that is, 
  \begin{align*}
	&\int_{\Om} |\na u^\ga|^{p-2} \na u^\ga\na\big(u^{(1-\ga)(p-1)}\phi \big) +(\ga-1)(p-1) \int_{\Om} u^{(1-\ga)(p-1)-1} \phi|\na u^\ga|^{p-2} \na u^\ga\na u  \nonumber \\
	& +\ga^{p-q}\int_{\Om} \big[|\na u^\ga|^{q-2} \na u^\ga\na\big(u^{(1-\ga)(q-1)}\phi \big) +(\ga-1)(q-1) u^{(1-\ga)(q-1)-1} \phi|\na u^\ga|^{q-2} \na u^\ga\na u \big]  \nonumber
	\\&\qquad  =\ga^{p-1} \int_{\Om} f(x) u^{-\de}\phi.
  \end{align*}
 This implies that,
	\begin{align*}
	 -u^{(1-\ga)(p-1)}\De_p u^\ga - &\ga^{p-q} u^{(1-\ga)(q-1)}\De_q u^\ga+ (\ga-1)(p-1) u^{(1-\ga)(p-1)-1}|\na u^\ga|^{p-2}\na u^\ga \na u \\
	 &+ \ga^{p-q}(\ga-1)(q-1) u^{(1-\ga)(q-1)-1}|\na u^\ga|^{q-2}\na u^\ga \na u = \ga^{p-1} f(x) u^{-\de},
	\end{align*}
 equivalently,
  \begin{align}\label{eq25}
	-\De_p u^\ga - \ga^{p-q} u^{(\ga-1)(p-q)}\De_q u^\ga + (\ga-1)(p-1) \frac{|\na u^\ga|^p}{u^\ga} 
	&+ \ga^{p-q}(\ga-1)(q-1) \frac{|\na u^\ga|^q}{u^{\ga+(\ga-1)(q-p)}}  \nonumber \\
	&  \ \ = \ga^{p-1} f(x) u^{-\de+(\ga-1)(p-1)}.
  \end{align}  	
On the account of assumption $u \le \Ga d^{\tl\sg}$, we see that the right hand side of \eqref{eq25} can be bounded from above by $C(\ga,\Ga,c_2) d^{-\ba+\tl\sg(-\de+(\ga-1)(p-1))}$.
Therefore, we choose $\ga>1$ such that $u^\ga\in W^{1,p}_0(\Om)$ and $-\ba+\tl\sg\big(-\de+(\ga-1)(p-1)\big)>0$ so that the right hand side of \eqref{eq25} is in $L^\infty(\Om)$. For convenience, we denote $u^\ga = v\in W^{1,p}_0(\Om)$, thus \eqref{eq25} takes the following form
 \begin{align}\label{eq26}
  -\De_p v- \ga^{p-q} v^{(\ga-1)(p-q)/\ga}\De_q v + (\ga-1)(p-1) \frac{|\na v|^p}{v} 
  &+ \ga^{p-q}(\ga-1)(q-1)\frac{|\na v|^q}{v^{1+(\ga-1)(q-p)/\ga}} \nonumber\\
  & \quad= \ga^{p-1} f(x) v^{\frac{-\de+(\ga-1)(p-1)}{\ga}}.
 \end{align}
Now, we will prove that $v\in C^{0,\al_1}(\ov\Om)$, for some $\al_1\in(0,1)$. Let us assume that $0\in\Om$ and $\rho>0$ be such that $B_{2\rho}:=B_{2\rho}(0)\Subset\Om$. \\
\textbf{Claim}: There exists a constant $C>0$, depending only on $N,p$ and $\|v\|_{L^\infty(\Om)}$, such that $\int_{B_\rho} |\na v|^p \le C \rho^{N-p}$.\\
We note that the similar result holds if $B_\rho$ is replaced by $\Om\cap B_\rho$, when $B_\rho$ is centered on the boundary of $\Om$. Let $\zeta\in C^\infty_c(\Om)$ such that $0\le \zeta \le 1$, $\zeta=1$ in $B_\rho$ and supp$(\zeta)\subset B_{2\rho}$. We take $\phi= e^{\tau v}\zeta^p$, for some $\tau>0$, in the weak formulation of \eqref{eq26}, thus
\begin{align*}
  \int_{\Om} \Big[ |\na v|^{p-2}\na v \na \phi + \ga^{p-q} |\na v|^{q-2} \na v \na( v^\frac{(\ga-1)(p-q)}{\ga}\phi)+ (\ga-1)(p-1) \frac{|\na v|^p}{v}\phi \\
  + \ga^{p-q}(\ga-1)(q-1)\frac{|\na v|^q}{v^{1+(\ga-1)(q-p)/\ga}} \phi -\ga^{p-1} f(x) v^{\frac{-\de+(\ga-1)(p-1)}{\ga}}\phi \Big]dx=0.
\end{align*} 
Since $\phi\ge 0$ and $v\ge 0$ in $\Om$, we observe that the third and fourth term of the integrand is nonnegative. Therefore,
 \begin{align*}
  \int_{\Om} e^{\tau v}\big[ |\na v|^{p-2}\na v(\tau \zeta^p\na v+ p\zeta^{p-1}\na\zeta)+v^\frac{(\ga-1)(p-q)}{\ga} |\na v|^{q-2}\na v(\tau \zeta^p \na v+ p\zeta^{p-1}\na\zeta)\\
  + \frac{(\ga-1)(p-q)}{\ga}v^{\frac{(\ga-1)(p-q)}{\ga}-1}|\na v|^q \zeta^p-\ga^{p-1} f(x) v^{\frac{-\de+(\ga-1)(p-1)}{\ga}} \zeta^p\big]dx \le 0, 
 \end{align*}
which again due to the nonnegativity of $\zeta$ and $v$, and for $\ga>1$ with $(\ga-1)(p-q)>\ga$, implies
\begin{align*}
  \tau \int_{\Om} e^{\tau v} |\na v|^{p} \zeta^p dx \le C\int_{\Om} e^{\tau  v} \big[ |\na v|^{p-1} p\zeta^{p-1}|\na\zeta| + \|v\|_{L^\infty(\Om)}^\frac{(\ga-1)(p-q)}{\ga} |\na v|^{q-1} p\zeta^{p-1}|\na\zeta|\\
  +\ga^{p-1} d^{-\ba+\tl\sg(-\de+(\ga-1)(p-1))} \zeta^p\big]dx.
\end{align*}
Then, proof of the claim can be completed proceeding similar to \cite[Lemma 1.1, p. 247]{ladyzh}. Employing \cite[Theorem 1.1, p.251]{ladyzh}, we conclude that $v=u^\ga\in C^{0,\al_1}(\bar\Om)$, here $\al_1\in(0,1)$ is a constant depending only on $\|u\|_{L^\infty(\Om)}$, $N,p$. Therefore, we infer that $u\in C^{0,\al}(\bar\Om)$, where $\al= \al_1/\ga$. This completes proof of the theorem.\QED
\begin{Remark}
 For the case $\ba< 2-1/p$ and $\Om = B_R(0)$, by standard procedure, we get the existence of a unique radial solution $u$ of $(P)$. Moreover, similar arguments as in the proof of Theorem  \ref{thm5} imply $u^\ga$ in $C^1(\ov\Om)$. In this manner the regularity results of Theorem \ref{thm6} and Corollary \ref{cor1} are improved.
\end{Remark}
 
\noi\textbf{Proof of Corollary \ref{cor1}}: We note that either by the uniqueness result or due to the minimality of the solution, on account of Theorem \ref{thm1}, we get $u\in \mc C_{d_{\beta,\delta}}$. Thus, for the case of $\beta+\delta<1$, Theorem \ref{thm5} ensures that $u\in C^{1,\al}(\ov\Om)$, whereas for the case $\ba+\de \ge 1$, Theorem \ref{thm6} implies $u\in C^{0,\al}(\ov\Om)$. This completes proof of the corollary.\QED

\begin{Remark}
We remark that our H\"older continuity result of Theorem \ref{thm5} for the gradient of weak solution, in the case of $\ba+\de<1$, can be used to obtain multiplicity results for the following quasilinear elliptic equation involving singular nonlinearity 
 \begin{equation*}
 (S) \; \left\{\begin{array}{rllll}
 \;  -\Delta_{p}u -\Delta_{q} u  &= f(x) u^{-\delta}+g(x,u),\; u>0 \text{ in }\; \Om,\\
  u&=0 \text{ on }  \pa\Om,
 \end{array}
 \right.
 \end{equation*}
 where $g(x,t)$ is a Carath\'eodory function satisfying 
 \begin{enumerate}
 \item[($g_1$)] $g(x,t)\ge 0$ for all $(x,t)\in\ov\Om\times\mb R^+$ with $g(x,0)=0$.
 \item[($g_2$)] There exists $r>p-1$ with $r\le  p^*-1:= \frac{Np}{N-p}-1 $, if $p<N$, otherwise $r<\infty$
 such that $g(x,t)\le C(1+t)^r$ for all $(x,t)\in\Om\times\mb R^+$, for some constant $C>0$.
\end{enumerate}
 We define the associated energy functional $\mc I: W^{1,p}_0(\Om)\to\mb R$ as 
 \begin{align*}
 \mc I(u):=  \frac{1}{p} \int_{\Om}|\na u|^p dx+\frac{1}{q} \int_{\Om} |\na u|^q dx- \frac{1}{1-\de}\int_{\Om} f(x) |u|^{1-\de} dx-\int_{\Om} G(x,u)dx,
 \end{align*}
 where $G(x,u)=\int_{0}^{u} g(x,t)dt$.
 Following the approach in \cite{giacomoni1}, we can prove the following Sobolev versus H\"older minimizer result.
 \begin{Theorem}
 Let $u_0\in C^1(\ov\Om)$ satisfying $u_0\ge \eta d(x)$ in $\Om$, for some $\eta>0$, be a local minimizer of $\mc I$ in the $C^1(\ov\Om)\cap C_0(\ov\Om)$ topology. Then, $u_0$ is a local minimizer of $\mc I$ in $W^{1,p}_0(\Om)$ topology also.
 \end{Theorem}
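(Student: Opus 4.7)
The plan is to argue by contradiction along the lines of the classical Brezis--Nirenberg argument adapted to the singular quasilinear setting (here $\ba+\de<1$, so that Theorem \ref{thm5} is applicable). Suppose $u_0$ is not a local minimizer of $\mc I$ in the $W^{1,p}_0(\Om)$ topology. Then there is a sequence $\e_n\downarrow 0$ such that on each closed ball $\ov B_{\e_n}^{W^{1,p}_0}(u_0)$ the infimum of $\mc I$ is strictly less than $\mc I(u_0)$. Coercivity and weak lower semicontinuity of $\mc I$, proved exactly as in Lemma \ref{lem8} via Hardy's inequality together with the subcritical growth hypothesis on $g$, yields a minimizer $v_n\neq u_0$ on this ball. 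By construction $v_n\to u_0$ strongly in $W^{1,p}_0(\Om)$, and up to a subsequence also a.e.\ in $\Om$.

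Next I would write the Euler--Lagrange equation for the constrained minimizer: the Lagrange multiplier rule (formally justified because $u_0\ge\eta\,d$ keeps the singular term differentiable along admissible variations) provides $\mu_n\ge 0$ such that
\begin{equation*}
-(1+\mu_n)\big(\De_p v_n+\De_q v_n\big)=f(x)v_n^{-\de}+g(x,v_n)-\mu_n\big(\De_p u_0+\De_q u_0\big)\quad\mbox{in }\Om.
\end{equation*}
Then I would secure a lower bound $v_n(x)\ge\eta'\,d(x)$ in $\Om$ with $\eta'>0$ independent of $n$. This is achieved by a comparison argument in the spirit of Lemma \ref{lem7}/Lemma \ref{lem6}: using the function $\tl u_\rho$ of Lemma \ref{lem6} (or directly a multiple of $d(x)$) as a subsolution and exploiting $u_0\ge\eta d$ together with the fact that $v_n\to u_0$ in $W^{1,p}_0(\Om)$, one rules out degeneracy of the singular term along the sequence.

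With this lower bound, the right--hand side of the $v_n$--equation is dominated by $Cd(x)^{-(\ba+\de)}$ plus an $L^\infty$ term (using the uniform $L^\infty$--bound on $v_n$ that follows from Moser iteration \`a la \cite{dpk} together with the subcriticality of $g$). Since $\ba+\de<1$, the weight $g_n(x):=f(x)v_n^{-\de}+g(x,v_n)-\mu_n(\De_p u_0+\De_q u_0)$ satisfies the assumption \eqref{eqg} with $\sg=\ba+\de\in[0,1)$. Theorem \ref{thm5} then applies uniformly in $n$ and gives $\|v_n\|_{C^{1,\al}(\ov\Om)}\le C$, independent of $n$. By Arzel\`a--Ascoli, a subsequence converges in $C^1(\ov\Om)$, necessarily to $u_0$ in view of the $W^{1,p}_0$--convergence. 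Hence $v_n\to u_0$ in $C^1(\ov\Om)\cap C_0(\ov\Om)$, contradicting the $C^1$--local minimality of $u_0$.

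The main obstacle will be the uniform lower bound $v_n\ge\eta'\,d(x)$: handling simultaneously the unknown Lagrange multiplier, the singular term $v_n^{-\de}$ and the nonhomogeneous nature of $-\De_p-\De_q$ prevents a direct scaling argument, and one must instead combine the explicit behaviour of $u_0$ at $\pa\Om$ with a suitable comparison using distance--function subsolutions as developed in Section 2. Once that bound is established the uniform $C^{1,\al}(\ov\Om)$--estimate from Theorem \ref{thm5} and the compactness argument make the contradiction straightforward.
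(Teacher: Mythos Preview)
The paper does not actually prove this statement; it merely says ``Following the approach in \cite{giacomoni1}, we can prove the following\ldots''. Your overall strategy --- contradiction, minimize $\mc I$ on shrinking $W^{1,p}_0$--balls, derive a constrained Euler--Lagrange equation, obtain uniform $C^{1,\al}$ bounds via Theorem \ref{thm5}, and conclude by Arzel\`a--Ascoli --- is precisely the Brezis--Nirenberg/Giacomoni--Saoudi scheme that the paper is pointing to, so the plan is the right one.

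There is, however, a concrete error in the Euler--Lagrange equation you wrote. The constraint is $\int_\Om |\na(v-u_0)|^p\le \e_n^p$, so the multiplier term is
\[
-\mu_n\,\textnormal{div}\big(|\na(v_n-u_0)|^{p-2}\na(v_n-u_0)\big),
\]
not $-\mu_n(\De_p u_0+\De_q u_0)$; in particular it depends on $\na v_n$ and cannot be pushed into the right--hand side weight $g(x)$ as you do. In the framework of Theorem \ref{thm5} this term must be absorbed into the operator, i.e.\ one works with
\[
A_n(x,z)=|z|^{p-2}z+|z|^{q-2}z+\mu_n\,|z-\na u_0(x)|^{p-2}(z-\na u_0(x)),
\]
and then verifies (A1)--(A3) for $A_n$ with constants independent of $n$. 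This forces you to bound $\mu_n$ uniformly (test the constrained equation against $v_n-u_0$ and use $\|v_n-u_0\|_{W^{1,p}_0}=\e_n$), after which $\na u_0\in C(\ov\Om)$ gives the structure conditions. You also need, besides the lower barrier $v_n\ge\eta' d$, the \emph{upper} barrier $v_n\le C d$ required by Theorem \ref{thm5}; this comes from Proposition \ref{prop2} (applied to the perturbed operator $A_n$) once the $L^\infty$ bound is in hand. Finally, note a circularity to avoid: the Euler--Lagrange equation itself is only available once you know $v_n>0$ in $\Om$, so the positivity/lower bound step has to precede, or be built into, the derivation of the equation (as in \cite{giacomoni1}) rather than follow from it.
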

Using the above theorem and strong comparison principle for singular problems we can prove the existence of one positive solution $u_1$ to $(S)$. Consequently, using critical point analysis similar to \cite[Section 5]{dpk}, we obtain the existence of second solution.
\end{Remark}

\begin{Remark}
 We remark that Theorem \ref{thm6} can be used to obtain the existence results for quasilinear elliptic systems driven by the nonhomogeneous $p-q$ Laplace operator and involving singular nonlinearities by using Schauder fixed point theorem similar to \cite{giacomoni2}. 
\end{Remark}

\medskip
{\bf Acknowledgments}\\
 The authors thank the anonymous referee for the careful reading of this manuscript and for his/her remarks and comments, which have improved the initial version of our work. D. Kumar is thankful to Council of Scientific and Industrial Research (CSIR) for the financial support. K. Sreenadh acknowledges the support through the Project:
  MATRICS grant MTR/2019/000121 funded by SERB, India.

\end{document}